\documentclass[11pt]{article}
\usepackage{amsthm}
\usepackage{amsfonts}
\usepackage{amssymb}
\usepackage{amsmath}
\usepackage{enumitem}
\allowdisplaybreaks
\usepackage{cite}
\usepackage{epsfig}
\usepackage[top=1in,bottom=1in,left=1in,right=1in]{geometry}

\newcommand\dd{\,\mbox{d}}
\newcommand\AAA{{\cal H}}
\newcommand\NN{{\mathbb N}}
\newcommand\RR{{\mathbb R}}
\newcommand\jj{{\mathfrak j}}
\newcommand\cut[1]{\|#1\|_{\square}}
\newcommand\numv[1]{v\left(#1\right)}
\newcommand\nume[1]{e\left(#1\right)}

\newtheorem{theorem}{Theorem}
\newtheorem{corollary}[theorem]{Corollary}
\newtheorem{lemma}[theorem]{Lemma}
\newtheorem{conj}[theorem]{Conjecture}
\newtheorem{problem}[theorem]{Problem}
\newtheorem{claim}{Claim}[theorem]
\newtheorem*{problem*}{Problem}

\DeclareTextCompositeCommand{\v}{OT1}{l}{l\nobreak\hspace{-.1em}'}
\DeclareTextCompositeCommand{\v}{OT1}{t}{t\nobreak\hspace{-.1em}'\nobreak\hspace{-.15em}}

\begin{document}
\title{Common graphs with arbitrary chromatic number
       \thanks{The work of the first author has received funding from the European Research Council (ERC) under the European Union's Horizon 2020 research and innovation programme (grant agreement No 648509).
               This publication reflects only its authors' view; the European Research Council Executive Agency is not responsible for any use that may be made of the information it contains.
	       The second author is supported by the grant 23-06815M of the Grant Agency of the Czech Republic.
	       The first and second authors were also supported by the MUNI Award in Science and Humanities (MUNI/I/1677/2018) of the Grant Agency of Masaryk University.
	       The third author is supported by NSF Grant DMS-1953958.}}

\author{Daniel Kr{\'a}\v{l}\thanks{Faculty of Informatics, Masaryk University, Botanick\'a 68A, 602 00 Brno, Czech Republic. E-mail: {\tt dkral@fi.muni.cz}.}\and
        Jan Volec\thanks{Department of Mathematics, Faculty of Nuclear Sciences and Physical Engineering, Czech Technical University in Prague, Trojanova 13, 120 00 Prague, Czech Republic. E-mail: {\tt jan@ucw.cz}.}\and
	Fan Wei\thanks{Duke Mathematics, Campus Box 90320, Durham, NC 27708, USA. E-mail: {\tt fan.wei@duke.edu}. Previous affiliation: Department of Mathematics, Princeton University, Princeton, NJ 08543, USA.}}

\date{} 
\maketitle

\begin{abstract}
Ramsey's Theorem guarantees for every graph $H$ that
any $2$-edge-coloring of a sufficiently large complete graph contains a monochromatic copy of $H$.
In 1962, Erd\H os conjectured that
the random $2$-edge-coloring minimizes the number of monochromatic copies of $K_k$, and
the conjecture was extended by Burr and Rosta to all graphs.
In the late 1980s,
the conjectures were disproved by Thomason and Sidorenko, respectively.
A classification of graphs whose number of monochromatic copies is minimized by the random $2$-edge-coloring,
which are referred to as common graphs, remains a challenging open problem.
If Sidorenko's Conjecture, one of the most significant open problems in extremal graph theory, is true,
then every $2$-chromatic graph is common, and in fact,
no $2$-chromatic common graph unsettled for Sidorenko's Conjecture is known.
While examples of $3$-chromatic common graphs were known for a long time,
the existence of a $4$-chromatic common graph was open until 2012, and
no common graph with a larger chromatic number is known.

We construct connected $k$-chromatic common graphs for every $k$.
This answers a question posed by Hatami, Hladk\'y, Kr\'al', Norine and Razborov [Combin. Probab. Comput. 21 (2012), 734--742], and
a problem listed by Conlon, Fox and Sudakov [London Math. Soc. Lecture Note Ser. 424 (2015), 49--118, Problem 2.28].
This also answers in a stronger form the question raised by Jagger, \v S\v tov\'\i\v cek and Thomason [Combinatorica 16, (1996), 123--131]
whether there exists a common graph with chromatic number at least four.
\end{abstract}

\noindent Keywords: Ramsey theory, graph limits, chromatic number, spectral method, probabilistic method

\section{Introduction}
\label{sec:intro}

Ramsey's Theorem~\cite{Ram30}
led to a large body of results on the existence of well-behaved substructures in large structures, known as Ramsey Theory,
with links to many other areas of mathematics, see e.g.~\cite{GraRS13, ConFS15}.
Prominent examples of such links are Hindman's Theorem~\cite{Hin74} and Szemer\'edi's Theorem~\cite{Sze75} in number theory.
In one of its simplest forms, the Ramsey's Theorem asserts that for every graph $H$,
there exists an integer $N$ such that
any $2$-edge-coloring of the complete graph with $N$ vertices contains a monochromatic copy of $H$.
Determining the smallest such $N$,
known as the Ramsey number $r(H)$ of a graph $H$, is a famous open problem,
even in the supposedly simplest case when $H$ is a complete graph.
The best lower bound on the Ramsey number of a complete graph is by a random construction,
which pioneered the development of the probabilistic method in combinatorics~\cite{AloS16},
however, whether this random construction is (asymptotically) optimal is still widely open 
despite recent major progress~\cite{Con09,Sah23} and,
in particular, the recent major breakthrough~\cite{CamGMS23} on the upper bound.
This problem is an example of a classical theme in extremal combinatorics:
when is a random graph construction (close to) optimal?
In this paper, we are concerned with the quantitative version of this problem,
known as Ramsey multiplicity,
which asks how many monochromatic copies of a graph $H$
necessarily exist in any $2$-edge-coloring of the complete graph with $N$ vertices, and
when the bound coming from the random construction is optimal.

Goodman's Theorem~\cite{Goo59} implies that the number of monochromatic copies of the triangle $K_3$
is asymptotically minimized by the random $2$-edge-coloring,
i.e., when each edge of a complete graph is colored randomly with one of two colors with probability $1/2$.
We say that a graph $H$ is \emph{common} if the number of monochromatic copies of $H$
is asymptotically minimized by the random $2$-edge-coloring of a complete graph.
In particular, $K_3$ is common and more generally every cycle is common~\cite{Sid89}.

In 1962, Erd\H{o}s~\cite{Erd62} conjectured that every complete graph is common, and
later Burr and Rosta~\cite{BurR80} conjectured that every graph is common.
Both conjectures turned out to be false:
in the late 1980s,
Sidorenko~\cite{Sid86,Sid89} showed that a triangle with a pendant edge is not common and
Thomason~\cite{Tho89} showed that $K_4$ is not common.
More generally, any graph containing $K_4$ is not common~\cite{JagST96} (and so almost every graph is not common), and
there exist graphs $H$ and $2$-edge-colorings with the number of monochromatic copies of $H$
sublinear in the number of monochromatic copies of $H$ in the random $2$-edge-coloring~\cite{Cla92,Fox08}.

A characterization of the class of common graphs is an intriguing open problem and
there is even no conjectured description of the class.
This problem is closely related to the famous conjecture of Sidorenko~\cite{Sid93} and of Erd\H{o}s and Simonovits~\cite{ErdS83},
which asserts that every bipartite graph $H$ has the Sidorenko property,
i.e., the number of copies of $H$ in any graph
is asymptotically at least the number of its copies in the random graph with the same density.
Since every graph $H$ with the Sidorenko property is common,
as the number of copies of $H$ in each color class is at least the expected number of its copies in the random edge-coloring,
the conjecture, if true, would imply that all bipartite are common.
Hence, families of bipartite graphs known to have the Sidorenko property~\cite{BlaR65, Sid89, Sid91, ConFS10, ConL17, ConKLL18, ConL21}
provide examples of bipartite graphs that are common.

Common graphs that are not bipartite, i.e., their chromatic number is larger than two, are scarce.
In particular, Jagger, \v S\v tov\'\i\v cek and Thomason asked
whether there exists a common graph with chromatic number at least four.
While odd cycles~\cite{Sid89} and even wheels~\cite{JagST96,Sid96}
are examples of $3$-chromatic common graphs, also see~\cite{GrzLLV22},
the existence of a common graph with chromatic number at least four was open until 2012
when the $5$-wheel was shown to be common~\cite{HatHKNR12} as
one of the first application of the flag algebra method of Razborov~\cite{Raz07}.
The question whether there exist common graphs with arbitrarily large chromatic number has been reiterated in~\cite{HatHKNR12}, and also by
Conlon, Fox and Sudakov 
in the survey paper ``Recent developments in graph Ramsey theory'',
which is now a classical reference for results and directions related to Ramsey theory.

\begin{problem*}[{Conlon, Fox and Sudakov~\cite[Problem 2.28]{ConFS15}}]
Do there exist common graphs of all chromatic numbers?
\end{problem*}

In this paper, we solve this problem by establishing the following.

\begin{theorem}
\label{thm:main}
For every $\ell\in\NN$, there exists a connected common graph with chromatic number $\ell$.
\end{theorem}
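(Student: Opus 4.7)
The plan is to construct, for each $\ell \geq 3$, an explicit connected graph $H_\ell$ with $\chi(H_\ell) = \ell$ and verify that $H_\ell$ is common in the graphon language. Recall that $H$ is common if and only if $t(H, W) + t(H, 1 - W) \geq 2^{1 - e(H)}$ for every graphon $W : [0,1]^2 \to [0,1]$, with equality at $W \equiv 1/2$. Writing $W = 1/2 + U$ with $U$ symmetric and measurable and canceling odd-order terms after expansion, the inequality is equivalent to
\[
\Phi_H(U) := \sum_{\substack{S \subseteq E(H) \\ |S| \text{ even}, \ |S| \geq 2}} 2^{|S|} \, t\bigl(H[S], U\bigr) \geq 0,
\]
where $H[S]$ is the spanning subgraph of $H$ on edge set $S$. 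The proof thus reduces to producing, for every $\ell$, a connected graph $H_\ell$ with chromatic number exactly $\ell$ for which $\Phi_{H_\ell}(U) \geq 0$ holds uniformly in $U$.

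For the construction I would work recursively. Since any graph containing $K_4$ fails to be common (Jagger--\v S\v tov\'\i\v cek--Thomason), the building blocks must be $K_4$-free. Starting from $H_3 = C_5$ and $H_4 = W_5$, both known to be common, I would at each step apply a controlled gluing operation that raises $\chi$ by one while preserving $K_4$-freeness, for example by taking a join or a blow-up of $H_{\ell-1}$ with a bipartite gadget enjoying strong Sidorenko-type properties. The chromatic number can then be certified by an explicit $\ell$-coloring together with a lower-bound witness such as an odd $K_\ell$-subdivision embedded by construction; connectivity is preserved by design.

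To verify $\Phi_{H_\ell}(U) \geq 0$, I would follow the spectral template used by Hatami--Hladk\'y--Kr\'al'--Norine--Razborov for $W_5$. Expand the symmetric kernel as $U = \sum_i \lambda_i \varphi_i \otimes \varphi_i$, so that each density $t(H_\ell[S], U)$ becomes a polynomial in the $\lambda_i$ and in moments of the $\varphi_i$. Split the even-size subgraph contributions into those for which $H_\ell[S]$ is bipartite (contributing non-negatively by Sidorenko-type inequalities) and those for which it is not, and attempt to pair each indeterminate-sign term with a dominating bipartite one via Cauchy--Schwarz or H\"older. The inductive structure of $H_\ell$ is meant to make the extra contributions produced at level $\ell$ absorbable into those already controlled at level $\ell-1$.

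The principal obstacle I foresee is uniformity in $\ell$: the number of even-size subgraphs of $H_\ell$ grows super-exponentially, so a brute-force case analysis is hopeless. A successful proof must therefore be structural --- the graph $H_\ell$ needs to be engineered with enough algebraic or spectral symmetry (for instance as a Cayley graph of a well-chosen group, or via a bipartite backbone with a precisely controlled odd gadget pasted in) so that a single inductive inequality propagates commonness from $H_{\ell-1}$ to $H_\ell$. Finding such a construction --- one that simultaneously forces $\chi(H_\ell) = \ell$, avoids $K_4$, remains connected, and admits an inductive spectral absorption argument --- is where the bulk of the difficulty lies.
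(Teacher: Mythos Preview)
Your proposal is a program, not a proof, and its central step is left open: you never exhibit the operation that takes a common graph $H_{\ell-1}$ to a common graph $H_\ell$ with $\chi(H_\ell)=\ell$. The suggested candidates do not work. A join of $H_{\ell-1}$ with any graph containing an edge produces a $K_4$ as soon as $H_{\ell-1}$ contains a triangle (and $W_5$ does), so the Jagger--\v S\v tov\'\i\v cek--Thomason obstruction you yourself invoke kills the construction already at $\ell=5$; blow-ups do not raise the chromatic number at all, and blow-ups of common graphs are in any case not known to be common. The appeal to a ``spectral template'' from the $W_5$ result is also off target: that proof was a finite flag-algebra computation, not an inductive absorption scheme one can iterate. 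Your closing paragraph effectively concedes that the construction remains to be found, which is precisely the content of the theorem.

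The paper proceeds along completely different lines and is not recursive in $\ell$. One fixes once and for all a graph $G$ of girth at least $50$ and chromatic number $\ell$ (Erd\H os), attaches to it a short path and a $C_4$ to form a rooted graph $H^\bullet$, and then glues $H^\bullet$ via a long path to a large complete bipartite graph $K_{m,n}$. The analysis of $t(H^\bullet\oplus K_{m|\ell,n}^\bullet,W)$ splits according to $\cut{W-p}$: when $W$ is far from the constant graphon, the strong quantitative Sidorenko behaviour of $K_{m,n}$ (Lemmas~\ref{lm:gammagamma} and~\ref{lm:Kab}, Theorem~\ref{thm:bip}) overwhelms any deficit coming from the $H^\bullet$-part; when $W$ is close to constant, a spectral argument (Theorems~\ref{thm:core} and~\ref{thm:universal}) shows the entire graph is locally Sidorenko with a locality threshold \emph{independent} of $m$, $n$ and the path length. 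It is this uniformity, together with the fact that $\|K_{m,n}\|$ grows quadratically in the number of added vertices, that lets the two regimes mesh in Theorem~\ref{thm:step}. The chromatic number enters exactly once, through the choice of $G$, and plays no further role in the estimates.
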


We treat common graphs using methods from the theory of graph limits
while employing spectral tools from the operator theory;
graph limit methods permit avoiding lower order terms when analyzing large graphs,
which makes our arguments simpler to present and in some cases also more natural.
To prove Theorem~\ref{thm:main},
we show that every graph of sufficiently large girth can be embedded in a graph $H$ such that
any $2$-edge-coloring of a complete graph
has asymptotically at least as many monochromatic copies of $H$ as the random $2$-edge-coloring.
The proof of Theorem~\ref{thm:main} is split into two cases based
on whether the considered $2$-edge-coloring is close to the random coloring or not;
we refer to the two cases as the local regime and the non-local regime (we provide an overview of the proof
in Subsection~\ref{subsec:overview}).
The core of the proof is formed by the arguments related to the local regime,
which is described by the existence of a dominant eigenvalue of the operator associated with the $2$-edge-coloring.
While both the Sidorenko property and common graphs have been studied in the local regime~\cite{Lov11,FoxW17,FoxW,CsoHL23,HanKKV23},
the proof of Theorem~\ref{thm:main} requires new spectral arguments
to control (in)dependence of monochromatic embeddings of different parts of $H$ in the host edge-colored complete graph.
Our techniques could be used in other settings,
e.g., the setting of a conjecture of Kohayakawa, Nagle, R\"odl and Schacht~\cite{KohNRS10}
as discussed in Section~\ref{sec:concl}.

Our techniques also apply in the setting of $k$-common graphs introduced in~\cite{JagST96}:
for a positive integer $k$, a graph $H$ is \emph{$k$-common}
if the random $k$-edge-coloring of a complete graph asymptotically minimizes
the number of monochromatic copies of $H$ among all $k$-edge-colorings.
This notion provides another link to the Sidorenko property:
a graph $H$ has the Sidorenko property if and only if
the graph $H$ is $k$-common for all $k\ge 2$~\cite{KraNNVW22}.
If $H$ is $k$-common, then $H$ is $k'$-common for all $k'=2,\ldots,k$, and
thus $k$-common graphs for $k\ge 3$ are even more rare than common graphs (as such graphs are necessarily also common).
In fact, the question of Jagger, \v S\v tov\'\i\v cek and Thomason~\cite{JagST96}
about the existence of a non-bipartite $k$-common graph for $k\ge 3$
has been resolved only recently in~\cite{KraNNVW22}.
Using the techniques developed to prove Theorem~\ref{thm:main}, we also prove the following.

\begin{theorem}
\label{thm:maink}
For every integer $k\ge 2$ and positive integer $\ell$,
there exists a connected $k$-common graph with chromatic number $\ell$.
\end{theorem}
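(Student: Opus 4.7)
The plan is to generalize the proof of Theorem~\ref{thm:main} from two colors to $k$ colors. In the graph-limit language, a $k$-edge-coloring of a large complete graph is asymptotically encoded by a tuple of graphons $(W_1,\ldots,W_k)$ with $W_i\ge 0$ and $\sum_{i=1}^k W_i\equiv 1$, and the goal is to exhibit a connected graph $H$ of chromatic number $\ell$ satisfying
\[
\sum_{i=1}^k t(H,W_i) \ \ge \ k^{1-e(H)}
\]
for every such tuple. The candidate $H$ is built exactly as in the proof of Theorem~\ref{thm:main}: take a graph $G$ of chromatic number $\ell$ and sufficiently large girth (existence by Erd\H{o}s) and embed it in a gadget-augmented host graph whose spectral structure drives the perturbative analysis. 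The required girth, and perhaps the specifics of the gadget, may depend on $k$, but the construction template is unchanged.

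Following Theorem~\ref{thm:main}, I would split into the local regime, where $\max_i\|W_i-1/k\|_\square$ is small, and the non-local regime, where it is not. The non-local regime should extend from the two-color case with essentially cosmetic changes: when some color graphon is far from $1/k$ in cut norm, a counting-lemma estimate gives that $t(H,W_i)$ alone already exceeds $k^{1-e(H)}$, while the remaining colors contribute nonnegatively. In the local regime, write $W_i=\tfrac{1}{k}+U_i$ with $\sum_i U_i\equiv 0$ and expand
\[
\sum_{i=1}^k t(H,W_i) \ = \ k^{1-e(H)} \ + \ \sum_{\substack{S\subseteq E(H)\\ |S|\ge 1}} k^{-(e(H)-|S|)} \ \sum_{i=1}^k \int \prod_{e\in S} U_i \, .
\]
The $|S|=1$ terms vanish because $\sum_i U_i\equiv 0$, the $|S|=2$ terms form a quadratic form whose sign is governed by the spectra of the self-adjoint integral operators $T_{U_i}$ associated with each color, and higher-order terms are negligible when $\|U_i\|_\square\ll 1$.

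The main obstacle is upgrading the dominant-eigenvalue argument of Theorem~\ref{thm:main} to $k$ operators. For $k=2$ one has $T_{U_2}=-T_{U_1}$, so the two operators share a spectrum and positivity of the quadratic form reduces to a single-operator inequality; for $k\ge 3$ the operators are only tied by $\sum_i T_{U_i}=0$, and their spectra can differ substantially between colors. The key new step is a joint spectral inequality showing that the $k$-fold sum of dominant-eigenvalue contributions is controlled below by the next-order terms even under this weaker linear constraint; I expect this to follow from a convexity or power-mean argument (Cauchy--Schwarz applied to the spectral data, for instance) combined with the approximate decoupling properties of $H$ inherited from the large girth of $G$. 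Once such a joint estimate is in place, assembling it with the higher-order bounds and the non-local analysis yields the desired inequality $\sum_i t(H,W_i)\ge k^{1-e(H)}$, proving that $H$ is $k$-common.
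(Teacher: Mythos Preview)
Your proposal has a genuine gap, and it also diverges from the paper's route in an instructive way.

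The paper does \emph{not} perform a joint multilinear expansion $\sum_i t(H,1/k+U_i)$ and does not need any ``joint spectral inequality'' for the $k$ operators $T_{U_1},\ldots,T_{U_k}$. Instead, it proves a per-color statement: for the specific graph $H_0=H^\bullet\oplus K^\bullet_{m|\ell,n}$, every graphon $W$ with density $p\ge 1/2k$ satisfies $t(H_0,W)\ge p^{\|H_0\|}$, provided either $W$ is close to the constant-$p$ graphon (Corollary~\ref{cor:universal}) or $W$ is far from it and has no large sparse part (Theorem~\ref{thm:bip}). Once this local Sidorenko-type bound holds colorwise, the $k$-common inequality is immediate from Jensen: $\sum_i t(H_0,W_i)\ge\sum_i p_i^{\|H_0\|}\ge k\,(\tfrac{1}{k}\sum_i p_i)^{\|H_0\|}\ge k^{1-\|H_0\|}$. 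No interaction between the spectra of different colors is ever needed.

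The remaining cases---some $W_i$ has density below $1/2k$, or some $W_i$ contains a large $\delta$-sparse part---are not handled by your non-local sketch. The paper deals with them by an induction on the number of colors (Theorem~\ref{thm:step}): if $W_{k'}$ has a large sparse part, restrict all graphons to that part and apply the $(k'-1)$-color hypothesis; if some $W_i$ is very sparse, then another color has density at least $(1+1/4k)/k'$ and alone already yields $k'\cdot (k')^{-\|H_0\|}$. Your proposal does not mention this induction, yet without it the non-local regime breaks down: a graphon far from constant can have $t(H_0,W_i)$ arbitrarily small if it contains a near-independent set of positive measure.

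Your proposed ``joint spectral inequality'' is thus both unnecessary and, as you acknowledge, unproven. The hope that it ``should follow from a convexity or power-mean argument'' is exactly where the difficulty lies: for $k\ge 3$ the constraint $\sum_i U_i=0$ is too weak to align the eigenspaces of the $U_i$, and there is no evident reason the quadratic form $\sum_i\sum_{|S|=2}\int\prod_{e\in S}U_i$ should be nonnegative for an arbitrary high-girth $H$. The paper's detour through a per-color Sidorenko bound plus Jensen is precisely what sidesteps this obstacle.
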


\subsection{Proof motivation}
\label{subsec:motivation}

To motivate our proof, we sketch a possible strategy for proving the existence of
a \emph{disconnected} common graph with high chromatic number.
We present the strategy in the language of graph limits,
which is reviewed in Section~\ref{sec:notation}.
Fix a graph $H$ with girth four and chromatic number $k\ge 3$ and
consider the disjoint union of the graph $H$ and a complete bipartite graph, denoted as $H\cup K_{n,n}$, for a large $n$. 
Showing that $H\cup K_{n,n}$ is common amounts to showing that
\begin{equation}
t(H\cup K_{n,n},W)+t(H\cup K_{n,n},1-W)\ge 2^{-\nume{H}-n^2+1}
\label{eq:overview}
\end{equation}
for every graphon $W$, where $\nume{H}$ denotes the number of edges in $H$.
Note that the right side of \eqref{eq:overview} is equal to the left side when $W$ is the constant graphon equal $1/2$,
i.e., $W$ is the limit graphon for the sequence of the Erd\H os-R\'enyi random graphs $G_{m,1/2}$.
Since the inequality \eqref{eq:overview} is symmetric in $W$ and $1-W$,
it is enough to establish \eqref{eq:overview} for graphons $W$ with density at least $1/2$,
i.e., when $\int_{[0,1]^2} W\ge 1/2$.

The graph $H\cup K_{n,n}$ is locally common in the following sense as shown by Fox and the last author~\cite{FoxW17,FoxW}.
\begin{theorem}[Fox and Wei~\cite{FoxW17}]
\label{thm:girtheven}
If a graph $G$ has even girth, then for any $p\in [0,1]$, any graphon $W$ with density $p$ such that
$\cut{W-p}\le p^2 2^{-48\nume{G}-2}$, and $\|W\|_\infty\le 2p$ 
satisfies that $t(G,W)\ge p^{\nume{G}}$.
\end{theorem}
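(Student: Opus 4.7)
My plan is to set $U := W - p$, so that $\|U\|_\infty \le p$ by the hypothesis $\|W\|_\infty \le 2p$, and $\varepsilon := \cut{U}$ is exponentially small in $\|G\|$. Expanding the homomorphism density,
$$t(G,W) \;=\; \sum_{F\subseteq E(G)} p^{\|G\|-|F|}\,J_F, \qquad J_F := \int_{[0,1]^{V(G)}}\prod_{uv\in F} U(x_u,x_v)\,dx.$$
The $F=\emptyset$ term contributes exactly $p^{\|G\|}$, and every $|F|=1$ term vanishes because $\int U = 0$, so the task reduces to showing that the remaining terms sum to a nonnegative quantity up to a negligible error.

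I would classify nonempty $F$ by the structure of the subgraph $F' := (V(G),F)$. If $F'$ has a pendant vertex, I would iteratively integrate out the leaves; each such step produces a row-sum factor $h(y) = \int U(x,y)\,dx$ obeying $\int |h|\,dy \le \varepsilon$ by cut-norm duality, yielding $|J_F| \le \varepsilon\cdot p^{|F|-1}$. If $F'$ has minimum degree at least two, every component of $F'$ contains a cycle of length at least the girth $g$, which is even by hypothesis. For such $F$, I would expand $U$ spectrally as $U(x,y) = \sum_i \mu_i\,\psi_i(x)\psi_i(y)$ and group terms by the eigenfunction assigned to each edge: the ``diagonal'' contributions in which every cycle of $F'$ is traced by a single eigenfunction yield nonnegative even-power traces $\sum_i \mu_i^{2k}\ge 0$, while the ``off-diagonal'' mixed contributions are controlled by a telescoping Cauchy--Schwarz argument producing another factor of $\varepsilon$ from $\cut{U}$.

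Summing over the at most $2^{\|G\|}$ nonempty subsets $F$, the total absolute error is at most $2^{O(\|G\|)}\varepsilon\cdot p^{\|G\|-1}$, which the hypothesis $\varepsilon \le p^2\cdot 2^{-48\|G\|-2}$ dwarfs compared to the main term $p^{\|G\|}$; the constant $48$ is calibrated precisely to absorb these combinatorial factors cleanly. The main obstacle I anticipate is handling subsets $F$ whose components contain odd cycles: even when $g$ is even, longer odd cycles may exist in $G$, and their traces $\sum_i \mu_i^{2k+1}$ carry no definite sign. Dealing with them requires a surgery step identifying an edge whose removal permits a further cut-norm reduction, effectively converting the odd-cycle contribution into a pendant-style estimate. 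The even-girth hypothesis enters exactly here: the spectrally dominant cycles in $F'$ are forced to be even and hence nonnegative, so only the combinatorially sparser odd cycles require this additional treatment.
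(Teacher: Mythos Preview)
The paper does not prove Theorem~\ref{thm:girtheven}; it is quoted as a result of Fox and Wei~\cite{FoxW17}. What the paper \emph{does} prove is the closely related Theorem~\ref{thm:core}, using the Lov\'asz framework of~\cite{Lov11} (Lemmas~\ref{lm:L1}--\ref{lm:L3b}). So the relevant comparison is between your sketch and that framework.

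Your expansion $t(G,W)=\sum_F p^{\|G\|-|F|}J_F$ and the observation that the $|F|\le 1$ terms give exactly $p^{\|G\|}$ are correct and are indeed the starting point of the Lov\'asz/Fox--Wei proof. The genuine gap is in how you balance the remaining terms. You bound the ``error'' contributions by $2^{O(\|G\|)}\varepsilon\, p^{\|G\|-1}$ and then compare this to the main term $p^{\|G\|}$. But that comparison only yields $t(G,W)\ge p^{\|G\|}-2^{O(\|G\|)}\varepsilon\, p^{\|G\|-1}$, which is strictly below $p^{\|G\|}$; the theorem demands the \emph{exact} inequality, i.e.\ one must show $\sum_{F\neq\emptyset}p^{\|G\|-|F|}J_F\ge 0$. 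In the actual proof the error terms are not compared to $p^{\|G\|}$ at all: each is bounded in absolute value by a product like $t(P_3,U)\,t(C_4,U)^{1/4}$ or $t(C_4,U)^{5/4}$ (cf.\ Lemmas~\ref{lm:L3a}--\ref{lm:L3b}), and these are then absorbed by the explicitly identified \emph{positive} terms $t(P_3,U)\ge 0$ (from star components) and $t(C_g,U)\ge 0$ (the girth cycle, even by hypothesis). The smallness hypothesis on $\cut{U}$ is used only to make the extra $t(C_4,U)^{1/8}$-type factors small enough that a bounded number of error terms are dominated by one positive term of the same ``order''. Your pendant-vertex bound $|J_F|\le C\varepsilon\, p^{|F|-1}$ discards the positivity of the star terms and leaves nothing against which to cancel the remaining errors.

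Two further issues. First, your spectral ``diagonal/off-diagonal'' decomposition for min-degree-$2$ subgraphs is not how these terms are handled; the standard route is repeated Cauchy--Schwarz (Lemma~\ref{lm:CS}) reducing $|J_F|$ to powers of $t(C_4,U)$, not an eigenfunction expansion of the product over $F$. Second, your explanation of why even girth helps is not right: even girth does \emph{not} force all cycles appearing in subsets $F$ to be even (longer odd cycles may exist). The even-girth hypothesis is used precisely to guarantee that the single distinguished positive term $t(C_g,U)=\sum_i\lambda_i^g$ is nonnegative; odd cycles in other $F$ are handled as error terms via the Cauchy--Schwarz lemmas, not by any sign argument.
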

\noindent Hence, if a graphon $W$ is close (in the cut distance $\cut{\cdot}$) to the $1/2$-graphon,
then Theorem~\ref{thm:girtheven} almost implies that \eqref{eq:overview};
the issue lies in the fact that if $W$ has density $p>1/2$,
then the graphon $1-W$, whose density is $1-p$,
may fail to satisfy $\|1-W\|_\infty\le 2(1-p)$ needed to apply Theorem~\ref{thm:girtheven}.
The proof of Theorem~\ref{thm:girtheven} in~\cite{FoxW}
can be extended, at the expense of having a stronger bound on $\cut{W-p}$,
to the setting with the condition $\|W\|_\infty\le 2p$ weakened to $\|W\|_\infty\le Cp$ for any fixed $C\in\NN$,
however, the bound on $\cut{W-p}$ would depend on $H$ (in addition to $C$).
Such an extension of Theorem~\ref{thm:girtheven} would be sufficient for the case of the disconnected graph $H\cup K_{n,n}$,
however,
the proofs of Theorems~\ref{thm:pair} and~\ref{thm:step} in Section~\ref{sec:main} require
a bound on $\cut{W-p}$ that is independent of the actual graph $H$ for a specific family of graphs $H$.
Indeed, the value of $\varepsilon_0$ in Theorem~\ref{thm:core},
which gives a variant of Theorem~\ref{thm:girtheven} tailored for our setting,
does not depend on $\ell$.
Without removing the dependance on $H$ (or at least proving a significantly better dependance than in Theorem~\ref{thm:girtheven}),
which is the crucial contribution of the arguments presented in Section~\ref{sec:locSidorenko},
it would not be possible to set up the parameters to prove Theorems~\ref{thm:pair} and~\ref{thm:step} and
so Theorems~\ref{thm:main} and~\ref{thm:maink}.

We now continue a sketch of the argument that $H\cup K_{n,n}$ is common.
As discussed, if $W$ is close to the $1/2$-graphon,
then the inequality \eqref{eq:overview} holds by a suitable extension of Theorem~\ref{thm:girtheven}.
If a graphon $W$ is not close (in the cut distance) to the $1/2$-graphon,
we can use that the graph $K_{n,n}$ has the Sidorenko property
in the strong quantitative sense as given by Lemmas~\ref{lm:gammagamma} and~\ref{lm:K2n2nC4a}:
\[t(K_{n,n},W)\ge\left(p^4+\frac{\cut{W-p}^4}{8}\right)^{n^2/4}.\]
Hence, the quantity $t(K_{n,n},W)$ is significantly larger than $p^{n^2}$,
which can balance out a possible drop of the density of $H$ in $W$ and
so implies that
\[t(H\cup K_{n,n},W)\ge 2^{-\nume{H}-n^2+1}.\]
However, if a graphon $W$ contains a sparse part,
it may even happen that $t(H,W)=0$, so $t(H\cup K_{n,n},W)=0$, and
the argument fails (in fact,
it can be shown that the presence of a sparse part is essentially the only obstacle for the argument to proceed).
However,
if $W$ contains a sparse part of measure $\alpha_0>0$ with density $q$ close to $0$,
then this part has density $1-q$ in $1-W$ and
its contribution to $t(H\cup K_{n,n},1-W)$ (as formalized in Lemma~\ref{lm:omegaalpha})
is roughly $\alpha_0^{\numv{H}+2n}(1-q)^{\nume{H}+n^2}$;
the value of $\alpha_0$ depends on $H$ only and so if $n$ is sufficiently large,
then \[\alpha_0^{\numv{H}+2n}(1-q)^{\nume{H}+n^2}\ge 2^{-\nume{H}-n^2+1}.\]
Here,
we use that the number of edges of $H \cup K_{n,n}$
grows superlinearly in the number of vertices of $H\cup K_{n,n}$ with $n$ tending to infinity.

\subsection{Proof overview}
\label{subsec:overview}

On a high level,
the proofs of Theorems~\ref{thm:pair} and~\ref{thm:step},
which imply Theorems~\ref{thm:main} and~\ref{thm:maink}, respectively,
follows the lines described in Subsection~\ref{subsec:motivation};
the main steps of the two proofs are visualized in Figure~\ref{fig:overview}.

\begin{figure}
\begin{center}
\begin{tabbing}
XXXX \= XXXX \= XXXX \kill
Theorem~\ref{thm:core} (local regime)\\
{\it $W$ with density $p\ge p_0$ and $\cut{W-p}\le\varepsilon$ $\Rightarrow$ $t(H_0,W)\ge p^{e(H_0)}$}\\
\\
Theorem~\ref{thm:bip} (non-local regime)\\
{\it $W$ with density $p\ge p_0$, no sparse part and $\cut{W-p}\ge\varepsilon$ $\Rightarrow$ $t(H_0,W)\ge p^{e(H_0)}$}\\
\\
Theorem~\ref{thm:pair}\\
{\it $H_0$ is common, i.e., $t(H_0,W)+t(H_0,1-W)\ge 2\cdot 2^{-e(H_0)}$}\\
\\
\> $\bullet$ $W$ has a part with measure $\alpha_0$ and density $q$ close to zero \\
\> \> $\Rightarrow$ $t(H_0,1-W)\ge\alpha_0^{v(H_0)}(1-q)^{e(H_0)}\ge 2\cdot 2^{-e(H_0)}$\\
\> $\bullet$ $1-W$ has a part with measure $\alpha_0$ and density $q$ close to zero \\
\> \> $\Rightarrow$ $t(H_0,W)\ge\alpha_0^{v(H_0)}(1-q)^{e(H_0)}\ge 2\cdot 2^{-e(H_0)}$\\
\> $\bullet$ $W$ has density $\ge 1-p_0$\\
\> \> Theorem~\ref{thm:core} or Theorem~\ref{thm:bip} $\Rightarrow$ $t(H_0,W)\ge (1-p_0)^{e(H_0)}\ge 2\cdot 2^{-e(H_0)}$\\
\> $\bullet$ $1-W$ has density $\ge 1-p_0$\\
\> \> Theorem~\ref{thm:core} or Theorem~\ref{thm:bip} $\Rightarrow$ $t(H_0,1-W)\ge (1-p_0)^{e(H_0)}\ge 2\cdot 2^{-e(H_0)}$\\
\> $\bullet$ $W$ has density $p\in [p_0,1-p_0]$ and no sparse part \\
\> \> Theorem~\ref{thm:core} or Theorem~\ref{thm:bip} $\Rightarrow$ $t(H_0,W)\ge p^{e(H_0)}$\\
\> \> Theorem~\ref{thm:core} or Theorem~\ref{thm:bip} $\Rightarrow$ $t(H_0,1-W)\ge (1-p)^{e(H_0)}$\\
\> \> $t(H_0,W)+t(H_0,1-W)\ge p^{e(H_0)}+(1-p)^{e(H_0)}\ge 2\cdot 2^{-e(H_0)}$\\
\\
Theorem~\ref{thm:step}\\
{\it $H_0$ is $k$-common, i.e., $t(H_0,W_1)+\cdots t(H_0,W_k)\ge k\cdot k^{-e(H_0)}$}\\
\\
\> $\bullet$ $\exists i$ $W_i$ has a part with measure $\alpha_0$ and density close to zero \\
\> \> induction $\Rightarrow$ $t(H_0,W_1)+\cdots+t(H_0,W_k)\gtrapprox\alpha_0^{v(H_0)}(k-1)^{-e(H_0)}\ge k\cdot k^{-e(H_0)}$\\
\> $\bullet$ $\exists i$ $W_i$ has density smaller than $p_0$ $\Rightarrow$ $\exists j$ density of $W_j$ is at least $(k-1)^{-1}$\\
\> \> $\Rightarrow$ $t(H_0,W_j)\gtrapprox (k-1)^{-e(H_0)}\ge k\cdot k^{-e(H_0)}$\\
\> $\bullet$ Each $W_i$ has density $p_i\ge p_0$ bounded away from zero and no sparse part \\
\> \> Theorem~\ref{thm:core} or Theorem~\ref{thm:bip} $\Rightarrow$ $t(H_0,W_i)\ge p_i^{-e(H_0)}$\\
\> \> $t(H_0,W_1)+\cdots+t(H_0,W_k)\ge p_1^{e(H_0)}+\cdots+p_k^{e(H_0)}\ge k\cdot k^{-e(H_0)}$
\end{tabbing}
\end{center}
\caption{Informal statements of Theorems~\ref{thm:core} and~\ref{thm:bip}, and
         the main steps in the proofs of Theorems~\ref{thm:pair} and~\ref{thm:step},
	 which assert that a suitable graph $H_0$ is common and $k$-common, respectively.}
\label{fig:overview}
\end{figure}

In the proof of Theorem~\ref{thm:pair},
we show that a graph $H_0$ obtained by joining a suitable high-girth high-chromatic chromatic graph $H$
by a suitably long path to a complete bipartite graph $K_{m,n}$ is common.
The length $\ell$ of the path joining $H$ and the complete bipartite graph needs to be carefully controlled.
On one hand,
the path needs to be long enough to make copies of $H$ and $K_{m,n}$ in a graphon $W$ sufficiently independent in the sense 
that $t(H_0,W)$ is approximately $t(H,W)t(K_{m,n},W)p^{\ell}$ for any graphon $W$ with density $p$;
note that $t(H\cup K_{m,n},W)=t(H,W)t(K_{m,n},W)$.
At the same time, the number of edges of the graph $H_0$ needs to stay superlinear in the number of the vertices of $H_0$,
i.e., the length $\ell$ needs to be $o(v(H)+mn)$, so that
we are able to handle the case when the graphon $W$ contains a sparse part of measure $\alpha_0>0$ with density $q$ close to $0$.
In such case,
the density of $H_0$ in $W$ can be zero but
the density of $H_0$ in $1-W$ is approximately $\alpha_0^{v(H_0)}(1-q)^{e(H_0)}$,
which needs to be at least $2^{-e(H_0)+1}$ in order to yield that $H_0$ is common.

As the first step towards proving Theorems~\ref{thm:pair} and~\ref{thm:step},
we deal with the \emph{local regime} and
establish a counterpart of Theorem~\ref{thm:girtheven},
which is given as Theorem~\ref{thm:core} in Section~\ref{sec:locSidorenko}.
While Theorem~\ref{thm:core} applies to graphs of a particular form only,
the bound on the cut distance given in Theorem~\ref{thm:girtheven}
is independent of the length $\ell$ of the path attached to the graph $H$ (in addition
to relaxing the technical assumption that $\|W\|_\infty\le 2p$ as discussed in Subsection~\ref{subsec:motivation}).
The independence on $\ell$ is essential for proving Theorem~\ref{thm:universal} in Section~\ref{sec:local},
where the obtained bound on the cut distance has to be independent of the parameters $\ell$, $m$ and $n$ so that
it is sufficiently strong to eventually yield proofs of Theorems~\ref{thm:pair} and~\ref{thm:step}.
The proof of Theorem~\ref{thm:universal}, which is the technically most challenging part of our argument,
is obtained using spectral arguments,
which allow us to control how densities of rooted subgraphs change when walking along a path in a graphon.
In particular,
we argue that the density of $H_0$ in a graphon $W$
is at least the density of $H_0$ in the constant graphon with the same density as $W$ (Cases \ref{it:1} and \ref{it:2} in the proof)
unless the graphon $W$ has a sparse part (Case \ref{it:3}),
which in turn implies that it also has a denser part
where sufficiently many copies of the graph $H_0$ can be found.

The \emph{non-local regime} is addressed in Section~\ref{sec:nonlocal}.
First, Lemma~\ref{lm:omegaalpha} yields that 
the density of a fixed graph $H$ in any graphon $W$ is positive unless $W$ contains a sparse part,
however, the constants in the statement of the lemma depend on $H$.
Lemma~\ref{lm:omegaalpha} is then used to prove Theorem~\ref{thm:bip} that asserts that
if a graphon $W$ is far in the cut distance from the constant (quasirandom) graphon,
then the density of the graph $H_0$ in $W$ is large enough
unless $W$ contains a sparse part whose size is independent of the parameters $\ell$, $m$ and $n$ (from a suitably chosen range).

Finally, Section~\ref{sec:main} presents our main results---Theorems~\ref{thm:pair} and~\ref{thm:step};
the main steps of their proofs are visualized in Figure~\ref{fig:overview}.
If a graphon $W$ or $1-W$ has a sparse part,
then the density of $H_0$ in the complement of the sparse part is sufficiently large to establish that $H_0$ is common;
here, we need that $\ell$ is sublinear in $v(H)+mn$.
Otherwise, if a graphon $W$ is close to a constant graphon,
then the density of $H_0$ in both $W$ and $1-W$
is at least the density of $H_0$ in the constant graphons with the same density by Theorem~\ref{thm:universal}, and
if $W$ is not close to a constant graphon (and does not have a sparse part),
then the density of $H_0$ in both $W$ and $1-W$
is at least the density of $H_0$ in the constant graphons with the same density by Theorem~\ref{thm:bip}.
A simple convexity argument now yields that $t(H_0,W)+t(H_0,1-W)\ge 2\cdot 2^{-e(H_0)}$.

The proof of the more general Theorem~\ref{thm:step},
which concern $k$-common graphs, proceeds by induction on $k$,
which is the number of colors.
Individual color classes are represented by graphons $W_1,\ldots,W_k$ such that
$W_1+\cdots+W_k$ is a graphon with density close to one (rather than exactly equal to one);
this makes the induction argument significantly easier.
If one of graphons $W_1,\ldots,W_k$  has a very sparse part,
we apply induction to the remaining $k-1$ colors restricted to the sparse part.
If one of the color classes is sparse,
we argue that we find sufficiently many copies of $H_0$ in the densest color class (here,
we use that $\ell$ is sublinear in $v(H)+mn$).
If neither of these two cases apply,
we use for each graphon $W_i$, $i=1,\ldots,k$, Theorem~\ref{thm:universal} or Theorem~\ref{thm:bip},
depending whether $W_i$ is close to a constant graphon or not,
i.e., whether the corresponding color class falls into the local or non-local regime,
to derive that the density of $H_0$ in each $W_i$ is at least as in the constant graphon with the same density, and
we eventually derive using convexity of the function $p^{e(H_0)}$ that
the sum of densities of $H_0$ in $W_1,\ldots,W_k$ is sufficiently large.

\section{Preliminaries}
\label{sec:notation}

In this section, we fix notation used throughout the paper and
present auxiliary results needed in our arguments.
We start with basic notation.
The set of the first $k$ positive integers is denoted by $[k]$, and
the set of all $k$-element subsets of $A$ is denoted by $\binom{A}{k}$.
If $A$ is a measurable subset of $\RR^k$, we write $\mu(A)$ for the measure of $A$;
throughout the paper, we always consider the standard Borel measure on $\RR^k$.

All graphs considered in this paper are simple and loopless.
If $G$ is a graph, then the vertex set of $G$ is denoted by $V(G)$ and the edge set by $E(G)$;
the numbers of vertices and edges of $G$ are denoted by $\numv{G}$ and $\nume{G}$, respectively.
If $A$ is a subset of vertices of $G$, then $G[A]$ is the subgraph induced by $A$,
i.e., the graph with the vertex set $A$ and the edge set $E(G)\cap\binom{A}{2}$.
Finally, if $F$ is a subset of edges of $G$,
then $\langle F\rangle$ is the spanning subgraph of $G$ with the edge set $F$;
the host graph $G$ will always be clear from the context.
A \emph{homomorphism} from a graph $H$ to $G$ is mapping $f$ from $V(H)$ to $V(G)$ such that
if $uv$ is an edge of $H$, then $f(u)f(v)$ is an edge of $G$, and
the \emph{homomorphism density} of $H$ in $G$, denoted by $t(H,G)$,
is the probability that a random mapping from $V(H)$ to $V(G)$ is a homomorphism.

The $n$-vertex path is denoted by $P_n$, the $n$-vertex cycle by $C_n$,
the complete graph with $n$ vertices by $K_n$, and
the complete bipartite graph with parts consisting of $a$ and $b$ vertices by $K_{a,b}$.
Finally, $K_{a|\ell,b}$ is the graph obtained from $K_{a,b}$
by adding an $\ell$-edge path to one of the vertices in the $a$-vertex part.
In particular, $K_{a|0,b}$ is just the graph $K_{a,b}$.

\subsection{Graph limits}
\label{subsec:limits}

We treat the notion of common graphs using tools from the theory of graph limits.
In this subsection, we provide an introduction to the most important concepts;
we refer the reader to the monograph by Lov\'asz~\cite{Lov12} for a more complete treatment.
In the theory of graph limits, large graphs are represented by an analytic object called graphon.
A~\emph{graphon} is a measurable function $W:[0,1]^2\to [0,1]$ that is symmetric,
i.e., $W(x,y)=W(y,x)$ for all $x,y\in [0,1]$.
When no confusion can arise, we use $p$ to denote the graphon equal to $p\in [0,1]$ everywhere.
More generally, a \emph{kernel} is a symmetric measurable function $U:[0,1]^2\to\RR$.
In particular, a graphon is a kernel with range in $[0,1]$.
A graphon can be thought of as a continuous version of the adjacency matrix of a graph.
Because of this analogy, we refer to the elements of the domain $[0,1]$ as to vertices of $W$.
The \emph{degree} of a vertex $x\in [0,1]$ of a graphon $W$ is defined as
\[\deg_W(x)=\int_{[0,1]}W(x,y)\dd y.\]
The \emph{homomorphism density} of a graph $H$ in a graphon $W$ is defined as
\begin{equation}
t(H,W) = \int_{[0,1]^{V(H)}} \prod_{uv\in E(H)} W(x_u,x_v) \dd x_{V(H)}.\label{eq:tHW}
\end{equation}
For brevity, we often speak about the density of $H$ in $W$ instead of the homomorphism density of $H$ in $W$.
The \emph{density} of a graphon $W$ is the density of $K_2$ in $W$, i.e., $t(K_2,W)$.
Analogously, we define the homomorphism density of a graph $H$ in a kernel $U$ using \eqref{eq:tHW}, and
we define the density of a kernel $U$ to be $t(K_2,U)$.

We now cast the definition of graphs with the Sidorenko property and commons graphs in the language of graph limits,
see e.g.~\cite{KraNNVW22} for further details.
A graph $H$ has the \emph{Sidorenko property} if the following holds for every graphon $W$:
\[p^{\nume{H}}\le t(H,W)\]
where $p$ is the density of $W$.
A graph $H$ is \emph{common} if it holds that
\[2^{1-\nume{H}}\le t(H,W)+t(H,1-W)\]
for every graphon $W$.
Finally, for an integer $k\ge 2$,
we say that a graph $H$ is \emph{$k$-common} if
\[k^{1-\nume{H}}\le t(H,W_1)+\cdots+t(H,W_k)\]
holds for all graphons $W_1,\ldots,W_k$ such that $W_1+\cdots+W_k=1$.

We next define the notion of a norm for kernels,
which gives a metric on the space of kernels and so on graphons.
The \emph{cut norm} of a kernel $U$ is defined as
\[\cut{U} = \sup_{S, T \subseteq [0,1]} \left\lvert \int_{S \times T} U(x,y) \dd x \dd y\right\rvert\]
where the supremum is taken over all measurable subsets $S$ and $T$ of $[0,1]$.
If $U$ is a kernel with $\|U\|_{\infty}\le 1$,
then it holds~\cite[Lemma 8.12]{Lov12} that
\begin{equation}
\cut{U}^4\le t(C_4,U)\le 4\cut{U}.\label{eq:cutU}
\end{equation}
Similarly, it holds that $t(P_2,U)\le 2\cut{U}$.
Since $C_4$ has the Sidorenko property, it holds that $t(C_4,W)\ge p^4$ for every graphon $W$ with density $p$.
If a graphon $W$ with density $p$ is actually far from the $p$-constant graphon,
then the density of $C_4$ is much larger than $p^4$ as given in the lemma.
\begin{lemma}[{Cooper, Kr\'{a}\v{l} and Martins~\cite[Lemma 11]{CooKM18}}]
\label{lm:gammagamma}
The following holds for every graphon $W$ with density $p$:
\[p^4+\frac{\cut{W-p}^4}{8}\le t(C_4,W).\]
\end{lemma}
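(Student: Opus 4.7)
The plan is to write $W = p + U$, where $U$ is the mean-zero kernel $W - p$ (satisfying $\|U\|_\infty \le 1$ and $\cut{U} = \cut{W-p}$), and expand the defining integral of $t(C_4, W)$ by multilinearity. Grouping the $16$ resulting summands by how many factors are $U$ rather than $p$, and using $\int U = 0$ to annihilate the single-$U$ terms as well as the two "opposite-edge" contributions among the pairs, one arrives at the identity
\[
t(C_4, W) \;=\; p^4 + 4 p^2 \|g\|_2^2 + 4 p \langle T_U g, g\rangle + t(C_4, U),
\]
where $g(x) := \int U(x,y)\,\dd y$ and $T_U$ is the self-adjoint integral operator on $L^2[0,1]$ with kernel $U$. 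The final term is bounded below by $\cut{U}^4$ via \eqref{eq:cutU}, so it remains to control the cross contributions $4 p^2 \|g\|_2^2 + 4 p \langle T_U g, g\rangle$ so as not to erode more than a $7/8$-fraction of $t(C_4, U)$.

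To do this, I would pass to the spectral decomposition of $T_U$: fix an orthonormal eigenbasis $\{\phi_i\}$ with eigenvalues $\lambda_i$ and expand $1 = \sum_i a_i \phi_i$, so that $\sum_i a_i^2 = 1$ and $g = T_U 1 = \sum_i a_i \lambda_i \phi_i$. Substituting into the identity and completing the square term-by-term yields the manifestly non-negative representation
\[
t(C_4, W) - p^4 \;=\; \sum_i \lambda_i^2\Bigl[(\lambda_i + 2 p a_i^2)^2 + 4 p^2 a_i^2 (1 - a_i^2)\Bigr],
\]
which already reproves the Sidorenko bound $t(C_4, W) \ge p^4$ for $C_4$.

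To extract the quantitative $\cut{U}^4/8$, I would pick an index $i^*$ attaining $|\lambda_{i^*}| = \|T_U\|_{\mathrm{op}} =: L$; since $\cut{U} \le L$, it suffices to show the single $i^*$-summand exceeds $L^4/8$. There are three cases: (i) if $\lambda_{i^*} > 0$ the first square gives at least $L^4$; (ii) if $\lambda_{i^*} < 0$ and $2 p a_{i^*}^2 \le L/2$, the first square still gives at least $L^4/4$; (iii) if $\lambda_{i^*} < 0$ and $2 p a_{i^*}^2 > L/2$, then the zero-mean constraint $\sum_i a_i^2 \lambda_i = \int U = 0$ together with $|\lambda_i| \le L$ forces $a_{i^*}^2 \le 1/2$, which in turn forces $L < 2p$; a direct algebraic check on the interval $a_{i^*}^2 \in (L/(4p),\, 1/2]$ then shows the second square $4p^2 a_{i^*}^2(1 - a_{i^*}^2) L^2$ is at least $L^4/8$.

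The main obstacle is case (iii): the near-cancellation between $\lambda_{i^*}$ and $2 p a_{i^*}^2$ can destroy the first square, and the lower bound must be harvested from the second square alone; both the zero-mean constraint (which controls how much mass $a_{i^*}^2$ the constant function puts on the dangerous eigenvalue) and the self-consistent bound $L < 2p$ forced by that case are needed, and the constant $1/8$ in the lemma is precisely the slack that closes the balance.
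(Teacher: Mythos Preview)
Your proposal is correct. The paper itself does not prove this lemma; it is quoted from Cooper, Kr\'{a}\v{l} and Martins~\cite{CooKM18}, so there is no in-paper argument to compare against. Your spectral approach is a clean self-contained proof: the expansion identity
\[
t(C_4,W)-p^4=\sum_i \lambda_i^2\bigl[(\lambda_i+2p a_i^2)^2+4p^2 a_i^2(1-a_i^2)\bigr]
\]
checks out (with $a_i=\langle 1,\phi_i\rangle$, $\sum_i a_i^2=1$, and $\langle T_U 1,1\rangle=\int U=0$), and your three-case analysis of the dominant eigenvalue is sound. In case~(iii), the key chain is exactly as you indicate: $\sum_i a_i^2\lambda_i=0$ and $|\lambda_i|\le L$ force $a_{i^*}^2\le 1/2$, whence $L<2p$ from $a_{i^*}^2>L/(4p)$; then on $(L/(4p),1/2]$ the function $x(1-x)$ is increasing with infimum $\tfrac{L}{4p}\bigl(1-\tfrac{L}{4p}\bigr)\ge \tfrac{L^2}{32p^2}$ precisely when $L\le 8p/3$, which holds since $L<2p$. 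This delivers $4p^2 a_{i^*}^2(1-a_{i^*}^2)\,L^2\ge L^4/8$, and combined with $\cut{U}\le \|T_U\|_{\mathrm{op}}=L$ (via $|\int_{S\times T}U|=|\langle T_U 1_S,1_T\rangle|\le L$) the lemma follows. The only cosmetic point is to make explicit that the orthonormal eigenbasis includes the kernel of $T_U$, so that Parseval genuinely gives $\sum_i a_i^2=1$; the zero-eigenvalue terms contribute nothing to the displayed identity but are needed for the $a_{i^*}^2\le 1/2$ step.
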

On the other hand,
the Counting Lemma, which we now state,
implies that $t(C_4,W)\le p^4+4\cut{W-p}$ for every graphon $W$ with density $p$.
\begin{lemma}[Counting Lemma, {Lov\'asz~\cite[Lemma 10.23]{Lov12}}]
\label{lm:cutdistance}
The following holds for every graph $H$ and all graphons $W_1$ and $W_2$:
\[
\left\lvert t(H,W_1)-t(H,W_2)\right\rvert\le\nume{H}\cdot\cut{W_1-W_2}.
\]
\end{lemma}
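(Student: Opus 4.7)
The plan is to prove the bound by a standard hybrid/telescoping argument, replacing $W_1$ by $W_2$ one edge of $H$ at a time and controlling each single swap by $\cut{W_1-W_2}$. I would enumerate the edges of $H$ as $e_1,\ldots,e_m$ with $m=\|H\|$ and, for $0\le i\le m$, set
\[t_i=\int_{[0,1]^{V(H)}}\prod_{j\le i}W_2(x_{u_j},x_{v_j})\prod_{j>i}W_1(x_{u_j},x_{v_j})\dd x_{V(H)},\]
so that $t_0=t(H,W_1)$ and $t_m=t(H,W_2)$. By the triangle inequality it then suffices to show that $|t_i-t_{i-1}|\le\cut{W_1-W_2}$ for every $i\in[m]$.

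For a fixed $i$ with $e_i=u_iv_i$, I would isolate the two endpoint variables $x_{u_i},x_{v_i}$ from the remaining vertex variables $x_{\mathrm{rest}}$. Because $H$ is simple, no edge other than $e_i$ joins $u_i$ to $v_i$, so for each fixed $x_{\mathrm{rest}}$ the integrand of $t_i-t_{i-1}$ factors as
\[(W_2-W_1)(x_{u_i},x_{v_i})\cdot f_{x_{\mathrm{rest}}}(x_{u_i})\cdot g_{x_{\mathrm{rest}}}(x_{v_i})\cdot h(x_{\mathrm{rest}}),\]
where $f_{x_{\mathrm{rest}}}$ is the product of the kernel values on the non-$e_i$ edges incident to $u_i$, $g_{x_{\mathrm{rest}}}$ is the analogous product at $v_i$, and $h$ collects the remaining edges. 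Since $W_1$ and $W_2$ are graphons, all three factors take values in $[0,1]$.

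The proof then reduces to the following auxiliary statement, which I expect to be the only genuine step: for any kernel $U$ and any measurable $f,g\colon[0,1]\to[0,1]$,
\[\left|\int_{[0,1]^2}f(x)\,g(y)\,U(x,y)\dd x\dd y\right|\le\cut{U}.\]
The main obstacle is that the weighting functions $f$ and $g$ are $[0,1]$-valued rather than indicators of sets, so this bound is not immediate from the definition of $\cut{\cdot}$. I would overcome this by the layer-cake identities $f(x)=\int_0^1\mathbf{1}[f(x)\ge s]\dd s$ and $g(y)=\int_0^1\mathbf{1}[g(y)\ge t]\dd t$, which rewrite the left-hand side as the average over $(s,t)\in[0,1]^2$ of $\int_{\{f\ge s\}\times\{g\ge t\}}U$; each such rectangular integral is bounded in absolute value by $\cut{U}$ by definition, and hence so is their average. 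Applying this bound pointwise in $x_{\mathrm{rest}}$ with $U=W_2-W_1$ and then integrating against $h\in[0,1]$ yields $|t_i-t_{i-1}|\le\cut{W_1-W_2}$; summing over the $m=\|H\|$ values of $i$ gives the claimed inequality.
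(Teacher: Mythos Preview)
Your argument is correct and is exactly the standard telescoping proof of this inequality (essentially the one given in Lov\'asz's book). Note that the paper itself does not prove this lemma; it merely cites it as \cite[Lemma 10.23]{Lov12}, so there is no in-paper proof to compare against.
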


We next define a notion used in~\cite{KraNNVW22} that can be viewed as an analogue of a subgraph in a graphon.
Consider a measurable function $h:[0,1]\to [0,1]$ such that $\|h\|_1>0$.
Let $f:[0,\|h\|_1]\to [0,1]$ be the measurable function defined as
\[f(z) := \inf \left\{t\in [0,1]\mbox{ such that }\int_{[0,t]} h(x)\dd x\ge z\right\}.\]
Note that it holds 
$\int_A h(x)\dd x=\mu(f^{-1}(A))$
for every measurable subset $A \subseteq [0,1]$.
We define the graphon $W[h]$ by setting
\[W[h](x,y)=W\left(f(x\cdot \|h\|_1),\;f(y\cdot \|h\|_1)\right)
 \quad
 \mbox{for $(x,y)\in [0,1]^2$.}
 \]
Note that if $h$ is the indicator function of a measurable subset $A$,
then $W[h]$ is obtained by restricting $W$ to $A$ and rescaling;
if this is the case, we write $W[A]$ for the graphon $W[h]$.
It can be shown that
\[
t(H,W[h])=\frac{1}{\|h\|_1^{\numv{H}}}\int_{[0,1]^{V(H)}} \prod_{u\in V(H)}h(u) \prod_{uv\in E(H)} W(x_u,x_v) \dd x_{V(H)}
\]
holds for every graph $H$.
In particular, it holds that
\begin{equation}
t(H,W)\geq \|h\|_1^{\numv{H}}\cdot t(H,W[h]).\label{eq:tWh}
\end{equation}
We conclude this subsection by defining a notion of an almost independent set in a graphon.
For a graphon $W$ and $\delta>0$,
$\AAA_{\delta}(W)$ is the set of all measurable functions $h:[0,1]\to [0,1]$ with $\|h\|_1>0$ such that
the density of $W[h]$ is at most $\delta$.
The \emph{$\delta$-independence ratio} of $W$ is defined as
\[\alpha_{\delta}(W)=\sup_{h\in\AAA_{\delta}(W)}\|h\|_1;\]
if the set $\AAA_{\delta}(W)$ is empty,
then we set $\alpha_{\delta}(W)=0$.
We remark that this notion is closely related to the notion of $(\rho,d)$-dense graphs widely used in Ramsey Theory:
a graph is \emph{$(\rho,d)$-dense}
if any subset of at least $\rho\cdot\numv{G}$ vertices of $G$ induces a subgraph with density at least $d$.
Indeed, if a graphon $W$ is a limit graphon of a sequence of $(\rho,d)$-dense graphs,
then $\alpha_{\delta}(W)<\rho$ for every $\delta<d$.

\subsection{Rooted graphs}

A \emph{rooted graph} is a graph with one or more vertices distinguished as \emph{roots}.
We will use superscripts to emphasize that a graph is rooted and indicate the number of roots.
In particular, $H^\bullet$ will denote a rooted graph with a single root,
$H^{\bullet\bullet}$ will denote a rooted graph with two roots, and
$H^{\bullet\cdots\bullet}$ will denote a rooted graph with three or more roots.
In all rooted graphs considered in this paper,
the set of roots will always form an \emph{independent set}.
To simplify our notation, if $H^\bullet$ is a rooted graph,
$\numv{H}$ and $\nume{H}$ denote the number of vertices and edges of $H^\bullet$, respectively, as
these quantities are the same regardless of the presence or the absence of roots.
Examples of rooted graphs considered further include the following.
The rooted graph $P_{n}^{\bullet}$
is the graph obtained from the $n$-vertex path by choosing one of its end vertices as a root.
The rooted graph $K_{a,b}^{\bullet}$
is the graph obtained from the complete bipartite graph $K_{a,b}$ by choosing one of the vertices of the $a$-vertex part as a root.
Finally,
the rooted graph $K_{a|\ell,b}^{\bullet}$ 
is the graph obtained from $K_{a|\ell,b}$ by choosing the end vertex of the appended $\ell$-edge path as a root.

If $G^\bullet$ and $H^\bullet$ are two rooted graphs,
then $G^\bullet\oplus H^\bullet$ is the (unrooted) graph obtained by identifying their roots.
Note that $K_{a|\ell,b}=P_{\ell+1}^\bullet\oplus K_{a,b}^\bullet$.
Similarly, if $G^{\bullet\bullet}$ and $H^{\bullet\bullet}$ are two rooted graphs with two roots,
then $G^{\bullet\bullet}\oplus H^{\bullet\bullet}$ is the graph obtained by identifying the corresponding roots, and
if $G^{\bullet\cdots\bullet}$ and $H^{\bullet\cdots\bullet}$ are two rooted graphs with the same number of roots,
then $G^{\bullet\cdots\bullet}\oplus H^{\bullet\cdots\bullet}$ is the graph obtained by identifying the corresponding roots;
the correspondence will always be clear from the context.

We next extend the notion of homomorphism density to rooted graphs.
Let $H^{\bullet\cdots\bullet}$ be a rooted graph with $k$ roots, and
let $v_1,\ldots,v_n$ be the vertices of $H^{\bullet\cdots\bullet}$ listed in a way that
the vertices $v_1,\ldots,v_k$ are the roots.
If $U$ is a kernel and $x_1,\ldots,x_k\in [0,1]$,
we define $t_{x_1,\ldots,x_k}(H^{\bullet\cdots\bullet},U)$ as
\begin{equation}
t_{x_1,\ldots,x_k}(H^{\bullet\cdots\bullet},U)=
\int_{[0,1]^{n-k}}\prod_{v_iv_j\in E(H^{\bullet\cdots\bullet})}U(x_i,x_j)\dd x_{k+1}\cdots\dd x_n.
\label{eq:tHUrooted}
\end{equation}
Observe that if $W$ is a graphon, then
$\deg_W(x)=t_x(P_2^\bullet,W)$
for $x\in [0,1]$.
Finally, if $G^{\bullet\cdots\bullet}$ and $H^{\bullet\cdots\bullet}$ are two rooted graphs such that
each has $k$ roots and the roots induce an independent set,
it holds that
\begin{equation}
t\left(G^{\bullet\cdots\bullet}\oplus H^{\bullet\cdots\bullet},U\right)=
\int_{[0,1]^k}t_{x_1,\ldots,x_k}\left(G^{\bullet\cdots\bullet},U\right)
              t_{x_1,\ldots,x_k}\left(H^{\bullet\cdots\bullet},U\right)\dd x_1\cdots\dd x_k
\label{eq:tHUmerge}
\end{equation}
for every kernel $U$;
note that the assumption that
the roots of $G^{\bullet\cdots\bullet}$ and $H^{\bullet\cdots\bullet}$ form independent sets
is needed for \eqref{eq:tHUmerge} to hold.

\subsection{Spectral properties of graphons and kernels}
\label{subsec:spectral}

We now review spectral properties of graphons and kernels;
we refer to~\cite[Section 7.5]{Lov12} for further details.
Fix a kernel $U$ with $\|U\|_\infty\le 1$.
We can think of $U$ as a Hilbert-Schmidt integral operator from $L_2[0,1]$ to $L_2[0,1]$ defined as
\[(Uf)(x)= \int_{0}^1 U(x,y) f(y) \dd y.\]
Let $\lambda_1,\lambda_2,\ldots$ be the non-zero eigenvalues of $U$ listed in the non-increasing order of the absolute value (with multiplicities), and
let $f_1,f_2,\ldots$ be the corresponding orthonormal eigenfunctions,
i.e., $\|f_i\|_2=1$ for every $i$ and the functions are orthogonal to each other in $L_2[0,1]$.
Note that
\[\sum_i \lambda_i f_i(x)f_i(y)\]
converges to $U$ in the $L_2$-norm.
We remark that if $U$ is a graphon with density $p$, then $\lambda_1$ is at least $p$.
Since we have assumed that $\|U\|_\infty\le 1$,
it holds that $(Uf_i)(x)\le\|f_i\|_2=1$ for every $x\in [0,1]$,
i.e., $\|Uf_i\|_\infty\le 1$,
which implies that $\|f_i\|_{\infty}\le \lvert\lambda_i\rvert^{-1}$.

The spectrum of $U$ can be used to express the density of cycles and paths.
Expressing the density of cycles is easier:
it holds for every $n\ge 3$ that
\begin{equation}
t(C_n,U) = \sum_{i} \lambda_i^n.
\label{eq:Cn}
\end{equation}
To express the density of paths, we need to introduce additional notation.
Let $\jj:[0,1]\to [0,1]$ be the constant function equal to $1$, and
set $\alpha_i\in [0,\pi/2]$ to be the real such that $\langle \jj,f_i\rangle=\cos\alpha_i$ (we may assume without
loss of generality by replacing $f_i$ with $-f_i$ that the product is non-negative).
Further set $\delta=1-\cos\alpha_1$.
Informally speaking, for a graphon $W$, $\delta$ measures how close $W$ is to having all the degrees the same,
in particular, $\delta=0$ if and only if $\deg_W(x)=p$ for almost every $x\in [0,1]$ where $p$ is the density of $W$.
The following holds for every $n\ge 2$ (for $n=2$, the equality is a particular case of (7.21) in~\cite{Lov12}):
\begin{equation}
t(P_n,U) = \langle \jj,U^{n-1}\jj\rangle = \sum_{i} \lambda_i^{n-1}\cos^2\alpha_i.
\label{eq:Pn}
\end{equation}

In the rest of this subsection, we deal with graphons only and
estimate some of the introduced parameters.
These estimates will be used repeatedly in the proofs of Theorems \ref{thm:core} and  \ref{thm:universal}. 
Fix a graphon $W$ with density $p\in [0,1]$, and
let $\gamma=t(C_4,W)-p^4$.
Note that $\gamma\ge 0$ since the cycle $C_4$ has the Sidorenko property;
on the other hand, $\gamma\le 4\cut{W-p}$ by the Counting Lemma (Lemma~\ref{lm:cutdistance}).
Since the density of $C_4$ in $W$ is at least $\lambda_1^4$,
it follows that
\begin{equation}
\lambda_1\le p+\frac{\gamma}{4p^3}\le p+\frac{\cut{W-p}}{p^3}.
\label{eq:lambda1small}
\end{equation}
We derive from $\lambda_1\ge p$ and \eqref{eq:Cn} that
\begin{equation}
\lvert\lambda_i\rvert\le\gamma^{1/4}
 \label{eq:lambdasmall}
\end{equation} 
holds for every $i\ge 2$.
Hence, we derive from~\eqref{eq:lambdasmall} that
\begin{equation}
\sum_{i\ge 2} \lambda_i^m \le \gamma^{\frac{m-4}{4}}\sum_{i\ge 2}\lambda_i^{4} \le \gamma^{m/4}
\label{eq:powersumgamma}
\end{equation}
holds for every $m\ge 4$.
In addition,
\eqref{eq:lambdasmall} yields that it holds for every $m\ge 0$ that
\[\left\lvert\sum_{i\ge 2}\lambda_i^m\cos^2\alpha_i\right\rvert\le
  \sum_{i\ge 2}\lvert\lambda_i\rvert^m\cos^2\alpha_i\le
  \max_{i\ge 2}\lvert\lambda_i\rvert^m\cdot\sum_{i\ge 2}\cos^2\alpha_i\le
  \max_{i\ge 2}\lvert\lambda_i\rvert^m\le
  \gamma^{m/4},\]
which implies by~\eqref{eq:Pn} that
\begin{equation}
\lambda_1^{m}(1-\delta)^2-\gamma^{m/4}\le
t(P_{m+1},W)\le
\lambda_1^{m}(1-\delta)^2+\gamma^{m/4}.
\label{eq:Pmrange}
\end{equation}

We next estimate $\delta$ in terms of $\gamma$ and subsequently in terms of $\cut{W-p}$.
Since the path $P_5$ has the Sidorenko property and \eqref{eq:Pn} holds,
we obtain that
\begin{align*}
  p^4 \le t(P_5,W) 
      & = \sum_i \lambda_i^{4}\cos^2\alpha_i 
        \le \lambda_1^4(1-\delta)^2 + \sum_{i\ge 2}\lambda_i^4 
        = \lambda_1^4(1-\delta)^2-\lambda_1^4+\sum_i\lambda_i^4 \\
      & \le p^4(1-\delta)^2-p^4+\sum_i\lambda_i^4
        = p^4(1-\delta)^2+\gamma
        \le p^4(1-\delta)+\gamma.
\end{align*}      
It follows that
\begin{equation}
\delta\le\frac{\gamma}{p^4}\le\frac{4\cut{W-p}}{p^4}.
\label{eq:delta}
\end{equation}
Since the eigenfunctions $f_i$'s are orthonormal, we obtain that
\begin{equation}
\sum_i\cos^2\alpha_i=\sum_{i}\langle \jj, f_i\rangle^2\le\|\jj\|_2^2=1. \label{eq:sumai}
\end{equation}
The inequality~\eqref{eq:sumai} implies that
\begin{equation}
\sum_{i\ge 2}\cos^2\alpha_i\le 1 - (1-\delta)^2  = 2\delta-\delta^2\le 2\delta.
\label{eq:sumcosa}
\end{equation} 
In particular, $\cos\alpha_i\le 2\delta^{1/2}$ for every $i\ge 2$.
Using~\eqref{eq:Pn} for $n=2$, \eqref{eq:lambdasmall} and \eqref{eq:sumcosa}, we obtain that
\[p=\sum_i\lambda_i\cos^2\alpha_i\le\lambda_1(1-\delta)^2+\gamma^{1/4}\sum_{i\ge 2}\cos^2\alpha_i
    \le\lambda_1(1-\delta)^2+2\delta\gamma^{1/4}.\]
This implies for $\delta\in [0,1]$ that 
\begin{equation}
\lambda_1 \; \ge \; \frac{p-2\delta\gamma^{1/4}}{(1-\delta)^2}
          \; \ge \; (p-2\delta\gamma^{1/4})(1+\delta)^2
	  \; \ge \; p(1+2\delta) - 2\delta\gamma^{1/4}(1+\delta)^2
	  \; \ge \; p(1+2\delta) - 8\delta\gamma^{1/4}.
\label{eq:lambda1big0}
\end{equation}

\subsection{Estimates on densities}
\label{subsec:estimate}

We now review several estimates on densities of graphs in graphons and kernels.
A large number of the estimates that we present is standard and
we state them explicitly just for referencing in the rest of the paper.
We start with one of such estimates, which follows by the standard use of Jensen's inequality,
see e.g.~\cite[Proposition 1.10]{KraNNVW22}.
The two lemmas are also implied by the weak H\"older property that
all complete bipartite graphs have since they are weakly norming~\cite{Hat10},
also see~\cite[Section 14.1]{Lov12} for in-depth discussion.
\begin{lemma}
\label{lm:K2n2nC4b}
The following holds for every graphon $W$ and all integers $m,m',n$ such that $m\geq m'$:
\[t(K_{m,n},W)\geq t(K_{m',n},W)^{m/m'}.\]
\end{lemma}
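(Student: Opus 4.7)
The plan is to express both $t(K_{m,n},W)$ and $t(K_{m',n},W)$ as integrals over the $n$-part of $K_{m,n}$ of a power of a common auxiliary function, and then apply Jensen's inequality to the convex function $z\mapsto z^{m/m'}$.

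More concretely, I would first label the vertices of $K_{m,n}$ so that the $m$-part corresponds to variables $x_1,\dots,x_m$ and the $n$-part to variables $y_1,\dots,y_n$. The key observation is that the integrand $\prod_{i,j} W(x_i,y_j)$ factors across the $x_i$'s once $y_1,\dots,y_n$ are fixed. Setting
\[
g(y_1,\dots,y_n) \;=\; \int_{[0,1]} \prod_{j=1}^{n} W(x,y_j)\,\mathrm dx,
\]
a direct computation using Fubini gives
\[
t(K_{m,n},W) \;=\; \int_{[0,1]^n} g(y_1,\dots,y_n)^{\,m}\,\mathrm dy_1\cdots\mathrm dy_n,
\]
and the analogous identity holds for $m'$ in place of $m$.

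Since $m\ge m'\ge 1$, the function $\phi(z)=z^{m/m'}$ is convex on $[0,\infty)$. Applying Jensen's inequality to the nonnegative function $g^{m'}$ on the probability space $[0,1]^n$ yields
\[
\left(\int_{[0,1]^n} g^{m'}\,\mathrm dy\right)^{m/m'}
\;\le\;
\int_{[0,1]^n} \bigl(g^{m'}\bigr)^{m/m'}\,\mathrm dy
\;=\;
\int_{[0,1]^n} g^{m}\,\mathrm dy,
\]
which translates directly into $t(K_{m',n},W)^{m/m'}\le t(K_{m,n},W)$.

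There is no real obstacle here; the only thing to be careful about is the factorization across the $m$-part (which requires that the two sides of $K_{m,n}$ are independent sets, so no $W$-factor ties two $x_i$'s together) and the verification that $m/m'\ge 1$ is exactly what makes $\phi$ convex. The edge case $m'=0$ is vacuous (or can be interpreted as $t(K_{0,n},W)=1$), and for $m=m'$ the inequality is an equality, consistent with the statement.
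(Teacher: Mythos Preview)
Your proof is correct and is exactly the ``standard use of Jensen's Inequality'' the paper has in mind: express $t(K_{m,n},W)=\int_{[0,1]^n} g^m$ with $g(y_1,\dots,y_n)=\int_{[0,1]}\prod_j W(x,y_j)\,\mathrm dx$, and apply Jensen for the convex map $z\mapsto z^{m/m'}$ on $[0,\infty)$. No changes are needed.
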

Two applications of Lemma~\ref{lm:K2n2nC4b} yield the following.
\begin{lemma}
\label{lm:K2n2nC4a}
The following holds for every graphon $W$ and all integers $m, n \geq 2$:
\[ t(K_{m,n}, W) \geq t(C_4, W)^{mn/4}.\]
\end{lemma}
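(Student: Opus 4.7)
The plan is to chain two applications of Lemma~\ref{lm:K2n2nC4b}, exploiting the symmetry of the complete bipartite graph and the fact that $K_{2,2} = C_4$.

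First, I would apply Lemma~\ref{lm:K2n2nC4b} directly with parameters $m$ (playing the role of $m$) and $m' = 2$, which yields
\[t(K_{m,n}, W) \;\geq\; t(K_{2,n}, W)^{m/2}.\]
This requires only that $m \geq 2$, which holds by assumption. Since $t(K_{2,n}, W) \in [0,1]$, raising to the (possibly non-integer) power $m/2 \geq 1$ preserves the inequality direction.

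Next, using the symmetry $K_{2,n} = K_{n,2}$, I would apply Lemma~\ref{lm:K2n2nC4b} again, this time with the two sides of the bipartite graph interchanged: set the larger side to have $n$ vertices, the smaller side (which we shrink) to have $2$ vertices, and the fixed opposite side to have $2$ vertices. This gives
\[t(K_{2,n}, W) \;=\; t(K_{n,2}, W) \;\geq\; t(K_{2,2}, W)^{n/2} \;=\; t(C_4, W)^{n/2},\]
again legitimate since $n \geq 2$ and $t(K_{2,2},W) \in [0,1]$.

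Combining the two estimates yields
\[t(K_{m,n}, W) \;\geq\; \left(t(C_4, W)^{n/2}\right)^{m/2} \;=\; t(C_4, W)^{mn/4},\]
which is the claimed bound. There is essentially no obstacle here: the statement is a bookkeeping exercise once Lemma~\ref{lm:K2n2nC4b} is in hand. The only minor subtlety worth flagging is that the intermediate exponents $m/2$ and $n/2$ need not be integers, but this causes no trouble because Lemma~\ref{lm:K2n2nC4b} is stated (and proved via Jensen's inequality) for the ratio $m/m'$ and densities in $[0,1]$, for which raising to any real exponent $\geq 1$ behaves monotonically on the relevant inequality.
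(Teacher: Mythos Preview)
Your proof is correct and matches the paper's approach exactly: the paper states only that ``two applications of Lemma~\ref{lm:K2n2nC4b} yield the following,'' and you have carried out precisely those two applications, first reducing $m$ to $2$ and then (after using the symmetry $K_{2,n}=K_{n,2}$) reducing $n$ to $2$. Your remark about the exponents $m/2$ and $n/2$ possibly being non-integers is a fair observation, and your justification via monotonicity of $x\mapsto x^c$ on $[0,1]$ for $c\ge 1$ is the right way to handle it.
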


The estimates in the rest of the subsection concern kernels.
We start with a well-known estimate,
which follows by a straightforward application of the Cauchy-Schwarz inequality.
\begin{lemma}
\label{lm:L1}
The following holds for any kernel $U$:
\begin{equation}
0 \leq t(K_{1,2}, U) \leq t(C_4, U)^{1/2}.\label{eq:P3C4}
\end{equation}  
\end{lemma}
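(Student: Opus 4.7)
The plan is a two-step application of the Cauchy--Schwarz Inequality, after first rewriting $t(K_{1,2},U)$ in a form that exposes the $C_4$-integrand. Introduce the ``codegree'' kernel $K(y,z)=\int_{[0,1]} U(x,y)U(x,z)\dd x$. Expanding the density of $K_{1,2}$, where the central vertex is integrated over $x$ and the two leaves over $y,z$, and then applying Fubini gives
\[
t(K_{1,2},U)=\int_{[0,1]^3} U(x,y)U(x,z)\dd x\dd y\dd z=\int_{[0,1]^2} K(y,z)\dd y\dd z.
\]
This representation already settles the lower bound: swapping the order of integration the other way and using the symmetry of $U$ shows $t(K_{1,2},U)=\int_{[0,1]}\bigl(\int_{[0,1]} U(x,y)\dd y\bigr)^{\!2}\dd x\ge 0$, since it is the integral of a square of a real-valued function (crucially, $U$ takes real but not necessarily nonnegative values, so the positivity has to come from a squaring argument rather than from pointwise nonnegativity).

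For the upper bound I would apply Cauchy--Schwarz on $[0,1]^2$ to $K$ against the constant function $1$:
\[
\left(\int_{[0,1]^2} K(y,z)\dd y\dd z\right)^{\!2}\le \int_{[0,1]^2} 1\dd y\dd z\cdot\int_{[0,1]^2} K(y,z)^2\dd y\dd z=\int_{[0,1]^2} K(y,z)^2\dd y\dd z.
\]
The final step is to recognise $\int K(y,z)^2\dd y\dd z$ as $t(C_4,U)$: writing $K(y,z)^2=\int_{[0,1]^2} U(x_1,y)U(x_1,z)U(x_2,y)U(x_2,z)\dd x_1\dd x_2$ and integrating over $y,z$ as well produces exactly the four-cycle integrand with vertices $x_1, y, x_2, z$ in cyclic order.

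There is essentially no obstacle: the only item that requires care is the bookkeeping which confirms that the pair of vertices $y,z$ played against $x_1,x_2$ really corresponds to the alternation pattern of a $C_4$ (as opposed to, say, a $K_4$-minor or a multigraph). Once the two-colouring of $C_4$ is fixed consistently, the identification is forced and the lemma follows by concatenating the three displayed steps.
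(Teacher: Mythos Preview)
Your argument is correct and is precisely the ``straightforward application of the Cauchy--Schwarz Inequality'' the paper alludes to: the lower bound comes from writing $t(K_{1,2},U)$ as $\int\bigl(\int U(x,y)\dd y\bigr)^2\dd x$, and the upper bound from applying Cauchy--Schwarz to the codegree kernel against the constant $1$ and identifying $\int K^2$ with $t(C_4,U)$. Nothing further is needed.
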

The next lemma follows by using Jensen's inequality and the following two identities:
\[t(K_{1,k},U)=\int_{[0,1]} t_x(P_2^{\bullet},U)^k\dd x\quad\mbox{and}\quad
  t(K_{2,k},U)=\int_{[0,1]^2} t_{x,y}(K_{2,1}^{\bullet\bullet},U)^k\dd x\dd y.\]
\begin{lemma}
\label{lm:L2}
The following holds for any kernel $U$ with $\|U\|_\infty\le 1$ and every integer $k\ge 2$:
\[\lvert t(K_{1,k},U)\rvert\le t(K_{1,2},U)^{k/2}\le t(K_{1,2},U)\quad\mbox{and}\quad \lvert t(K_{2,k},U)\rvert\le t(C_4,U)^{k/2}\le t(C_4,U).\]
\end{lemma}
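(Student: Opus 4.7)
The plan is to apply the two identities stated immediately before the lemma to reduce each statement to a moment inequality on a probability space. I will write $f(x):=t_x(P_2^\bullet,U)=\int_0^1 U(x,y)\,dy$ and $g(x,y):=t_{x,y}(K_{2,1}^{\bullet\bullet},U)=\int_0^1 U(x,z)U(y,z)\,dz$, so that $t(K_{1,k},U)=\int_{[0,1]} f(x)^k\,dx$, $t(K_{1,2},U)=\int f^2$, $t(K_{2,k},U)=\int_{[0,1]^2} g(x,y)^k\,dx\,dy$, and $t(C_4,U)=\int g^2$. Since $\|U\|_\infty\le 1$, both $|f|$ and $|g|$ are pointwise at most $1$, and hence $t(K_{1,2},U)$ and $t(C_4,U)$ lie in $[0,1]$.

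The rightmost inequality in each chain is immediate from this observation: for $a\in[0,1]$ and $k/2\ge 1$ one has $a^{k/2}\le a$, which applied with $a=t(K_{1,2},U)$ and $a=t(C_4,U)$ yields $t(K_{1,2},U)^{k/2}\le t(K_{1,2},U)$ and $t(C_4,U)^{k/2}\le t(C_4,U)$.

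For the first inequality in each chain the plan is to invoke Jensen's inequality. I would first pass to absolute values via $\bigl|\int f^k\bigr|\le \int |f|^k = \int (f^2)^{k/2}$, and then apply Jensen on the probability space $([0,1],dx)$ to the convex power function $\phi(t)=t^{k/2}$ (convex on $[0,\infty)$ precisely because $k/2\ge 1$) together with the nonnegative integrand $f^2$, so as to compare $\int (f^2)^{k/2}$ with $\bigl(\int f^2\bigr)^{k/2}=t(K_{1,2},U)^{k/2}$. The same argument runs verbatim on the probability space $([0,1]^2,dx\,dy)$ with $g$ in place of $f$, giving the bound for $K_{2,k}$ and $C_4$ via $t(C_4,U)^{k/2}$.

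I do not anticipate any real obstacle: the content is a textbook moment comparison combined with the pointwise $L^\infty$ control from $\|U\|_\infty\le 1$. The only minor bookkeeping point is that $t(K_{1,k},U)$ and $t(K_{2,k},U)$ can be negative when $k$ is odd, which is why the absolute value must be introduced at the outset through $\bigl|\int f^k\bigr|\le\int |f|^k$ before Jensen is applied; after that step, the integrand is non-negative and the moment comparison proceeds uniformly for all $k\ge 2$.
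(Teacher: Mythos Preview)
Your Jensen step is applied in the wrong direction. For the convex function $\phi(t)=t^{k/2}$ on a probability space, Jensen's inequality yields $\phi\bigl(\int f^2\bigr)\le\int\phi(f^2)$, i.e.\ $\bigl(\int f^2\bigr)^{k/2}\le\int|f|^k$, which is the reverse of the bound $\int|f|^k\le\bigl(\int f^2\bigr)^{k/2}$ you are trying to establish. In fact the intermediate inequality $|t(K_{1,k},U)|\le t(K_{1,2},U)^{k/2}$ is false in general: take $U(x,y)=h(x)+h(y)$ with $h=\tfrac12$ on $[0,\tfrac14]$ and $h=-\tfrac16$ on $(\tfrac14,1]$; then $\|U\|_\infty\le 1$, $f=h$, $t(K_{1,2},U)=\int h^2=\tfrac{1}{12}$, yet $t(K_{1,4},U)=\int h^4=\tfrac{7}{432}>\tfrac{1}{144}=t(K_{1,2},U)^{2}$. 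The same obstruction applies to the $K_{2,k}$/$C_4$ chain. The paper's one-line justification via Jensen shares this slip.

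What is actually used later in the paper is only the outer inequality $|t(K_{1,k},U)|\le t(K_{1,2},U)$ (respectively $|t(K_{2,k},U)|\le t(C_4,U)$), and this does follow from your setup: since $|f|\le 1$ one has $|f|^k\le f^2$ pointwise for every $k\ge 2$, hence $\bigl|\int f^k\bigr|\le\int|f|^k\le\int f^2=t(K_{1,2},U)$, and likewise with $g$ in place of $f$. So your identities and the $\|U\|_\infty\le 1$ observation are exactly the right starting points; the route through $t(K_{1,2},U)^{k/2}$ should simply be dropped.
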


The remaining estimates that we present were proven by Lov\'asz in~\cite{Lov11};
we state them with corresponding references to the statements in the paper~\cite{Lov11},
which contain them or imply them.
We remark that the second inequality in Lemma~\ref{lm:L1b} follows using Lemma~\ref{lm:L1}.

\begin{lemma}[{Lov\'asz~\cite[Lemma 3.3]{Lov11}}]
\label{lm:CS}
Let $f_1,\dots, f_n: [0,1]^k \to \mathbb{R}$ be bounded measurable functions such that
for each variable there are at most two functions $f_i$ that depend on that variable.
It holds that
\[\int_{[0,1]^k} \prod_{i\in [n]}f_i(x_1,\ldots,x_n)\dd x_1\cdots\dd x_k\leq\prod_{i\in [n]}\|f_i\|_2.\]
\end{lemma}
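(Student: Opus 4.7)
The plan is to prove the inequality by induction on the number of variables $k$, at each step peeling off one variable and using the Cauchy--Schwarz inequality to separate the (at most two) functions that depend on it. Since $\int_{[0,1]^k}\prod_i f_i\le\int_{[0,1]^k}\prod_i|f_i|$ and $\||f_i|\|_2=\|f_i\|_2$, I may assume without loss of generality that $f_i\ge 0$ for every $i$; this reduction is important so that the inductive replacement of $f_i$'s by pointwise upper bounds does not flip the sign of the remaining integrand. The base case $k=0$ holds trivially: every $f_i$ is a nonnegative constant and $\prod_i f_i=\prod_i\|f_i\|_2$.

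For the inductive step, fix a variable $x_k$ and let $I\subseteq[n]$ be the set of indices of functions that actually depend on $x_k$, so $|I|\le 2$ by hypothesis. If $I=\emptyset$, the $x_k$-integration contributes a factor of $1$ and I apply the induction hypothesis directly in the remaining $k-1$ variables. If $I=\{i_0\}$, Cauchy--Schwarz against the constant $1$ yields
\[\int_{[0,1]} f_{i_0}(x_1,\ldots,x_k)\dd x_k\;\le\;\left(\int_{[0,1]} f_{i_0}(x_1,\ldots,x_k)^2\dd x_k\right)^{1/2}=:g_{i_0}(x_1,\ldots,x_{k-1}),\]
while if $I=\{i_1,i_2\}$, Cauchy--Schwarz in the $x_k$-variable yields
\[\int_{[0,1]} f_{i_1}f_{i_2}\dd x_k\;\le\;\left(\int_{[0,1]} f_{i_1}^2\dd x_k\right)^{1/2}\left(\int_{[0,1]} f_{i_2}^2\dd x_k\right)^{1/2}=:g_{i_1}\cdot g_{i_2}.\]
In either case the new functions depend on only $x_1,\ldots,x_{k-1}$ and by Fubini satisfy $\|g_{i_j}\|_2=\|f_{i_j}\|_2$. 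The hypothesis that each variable belongs to at most two functions is preserved, because eliminating $x_k$ can only decrease the number of functions containing any remaining variable. Since the remaining factors $f_i$ are nonnegative, applying the induction hypothesis to the resulting $(k-1)$-variable family delivers the desired bound.

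The entire argument is elementary Cauchy--Schwarz combined with bookkeeping, so I do not anticipate a serious obstacle. The one mild subtlety worth flagging is the initial reduction to nonnegative functions: without it, replacing the pair $f_{i_1}f_{i_2}$ by its pointwise upper bound $g_{i_1}g_{i_2}$ might flip the sign of the remaining integrand against the other $f_i$'s and thereby invalidate the induction.
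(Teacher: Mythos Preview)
The paper does not supply its own proof of this lemma; it is simply quoted from Lov\'asz's paper and used as a black box. Your argument is correct and is essentially the standard proof: reduce to nonnegative $f_i$, then induct on the number of variables, at each step applying Cauchy--Schwarz in the variable being eliminated and noting that the resulting functions $g_{i_j}=\bigl(\int f_{i_j}^2\,\mathrm{d}x_k\bigr)^{1/2}$ have the same $L^2$ norm by Fubini and still satisfy the ``each variable in at most two functions'' hypothesis. Your remark about the necessity of the nonnegativity reduction is well taken. One small quibble: in the inductive step you write that eliminating $x_k$ ``can only decrease'' the number of functions containing a remaining variable; more precisely it does not increase it, which is all you need.
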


\begin{lemma}[{Lov\'asz~\cite[Corollary 3.12]{Lov11}}]
\label{lm:L1a}
The following holds for any kernel $U$ with $\|U\|_\infty\le 1$ and every integer $k\ge 2$:
\[t(C_{2k}, U) \leq t(C_4, U)^{k/2}.\]
\end{lemma}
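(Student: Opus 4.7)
The natural approach is spectral, using the decomposition of $U$ reviewed in Section~\ref{subsec:spectral}. The plan is to apply the identity \eqref{eq:Cn}, which gives
\[t(C_{2k},U)=\sum_{i}\lambda_i^{2k}\qquad\mbox{and}\qquad t(C_4,U)=\sum_{i}\lambda_i^{4}.\]
Since both exponents are even, every term in each sum is non-negative, so the claim $t(C_{2k},U)\le t(C_4,U)^{k/2}$ becomes the assertion $\|(\lambda_i)\|_{\ell^{2k}}\le\|(\lambda_i)\|_{\ell^{4}}$, which is the standard monotonicity of $\ell^p$-norms on sequences for $2k\ge 4$.

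The cleanest way to implement this is to let $\mu=\max_i|\lambda_i|$ and observe that for every $i$ one has $|\lambda_i|^{2k}=|\lambda_i|^{2k-4}\cdot\lambda_i^{4}\le\mu^{2k-4}\lambda_i^{4}$, whence
\[t(C_{2k},U)=\sum_i\lambda_i^{2k}\le\mu^{2k-4}\sum_i\lambda_i^{4}=\mu^{2k-4}\,t(C_4,U).\]
Since $\mu^{4}\le\sum_i\lambda_i^{4}=t(C_4,U)$, raising both sides to the power $(k-2)/2$ yields $\mu^{2k-4}\le t(C_4,U)^{(k-2)/2}$, and combining the two inequalities gives the desired bound $t(C_{2k},U)\le t(C_4,U)^{k/2}$.

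There is essentially no obstacle here beyond making sure that $\|U\|_\infty\le 1$ is used (this is what guarantees that $U$ acts as a Hilbert--Schmidt operator with a well-defined discrete spectrum and the expansion \eqref{eq:Cn} is valid) and that $2k$ is even so that the terms $\lambda_i^{2k}$ are genuinely non-negative and the bound $\mu^{2k-4}\le t(C_4,U)^{(k-2)/2}$ is meaningful even when $(k-2)/2$ is not an integer. Aside from this bookkeeping, the proof is a one-line consequence of \eqref{eq:Cn} and the elementary $\ell^p$-monotonicity.
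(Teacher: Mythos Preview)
Your argument is correct: the identity \eqref{eq:Cn} reduces the claim to the monotonicity $\|\lambda\|_{\ell^{2k}}\le\|\lambda\|_{\ell^4}$, and your factor-out-$\mu^{2k-4}$ derivation of this is clean and valid (the edge case $U\equiv 0$, where there are no non-zero eigenvalues and $\mu$ is undefined, is trivial since both sides vanish). The paper does not supply its own proof of this lemma but simply cites Lov\'asz; your spectral route via \eqref{eq:Cn} is precisely the standard argument behind that citation, so there is nothing to contrast.
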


\begin{lemma}[{Lov\'asz~\cite[Lemma 3.14]{Lov11}}]
\label{lm:L1b}
The following holds for any kernel $U$ with $\|U\|_\infty\le 1$ and every integer $k\ge 1$:
\[t(P_{k+3},U)^4 \leq t(P_{3},U)^4 t(C_4,U)^{k}\le t(C_4,U)^{k+2}.\]
\end{lemma}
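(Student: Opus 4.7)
The plan is to use the spectral decomposition of $U$ as a self-adjoint Hilbert-Schmidt operator on $L_2[0,1]$, which is available because $\|U\|_\infty\le 1$ (see Subsection~\ref{subsec:spectral}). By \eqref{eq:Pn}, both densities of interest are weighted power sums with the same non-negative weights $\cos^2\alpha_i$:
\[t(P_{k+3},U)=\sum_i \lambda_i^{k+2}\cos^2\alpha_i\quad\text{and}\quad t(P_3,U)=\sum_i \lambda_i^2\cos^2\alpha_i.\]
First I would factor $\lambda_i^{k+2}=\lambda_i^k\cdot\lambda_i^2$ inside the sum for $t(P_{k+3},U)$, pass to absolute values, and apply the pointwise bound $|\lambda_i|^k\le |\lambda_1|^k$. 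Since $\lambda_i^2\cos^2\alpha_i\ge 0$, this immediately yields $|t(P_{k+3},U)|\le |\lambda_1|^k\, t(P_3,U)$, with no cancellation issues to worry about for paths of odd length.

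Next I would invoke \eqref{eq:Cn}, which gives $t(C_4,U)=\sum_i\lambda_i^4\ge \lambda_1^4$, and therefore $|\lambda_1|\le t(C_4,U)^{1/4}$. Combining this with the previous bound and raising to the fourth power delivers the first inequality $t(P_{k+3},U)^4\le t(P_3,U)^4\,t(C_4,U)^k$.

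For the second inequality, I would use Lemma~\ref{lm:L1} together with the identity $P_3=K_{1,2}$ to conclude that $t(P_3,U)\le t(C_4,U)^{1/2}$, hence $t(P_3,U)^4\le t(C_4,U)^2$; multiplying by $t(C_4,U)^k$ gives the desired bound.

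I do not expect a substantive obstacle here. The only point that might initially look delicate is that $\lambda_i^{k+2}$ can be negative when $k$ is odd, but this is neutralised by passing to absolute values immediately and exploiting the non-negativity of the weights $\lambda_i^2\cos^2\alpha_i$; after this step the argument reduces to the elementary estimate $|\lambda_i|\le|\lambda_1|$ together with the well-known identification of $\sum_i\lambda_i^4$ with the $C_4$-density.
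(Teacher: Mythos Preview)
Your argument is correct. The paper does not supply its own proof of this lemma: it simply cites Lov\'asz~\cite[Lemma 3.14]{Lov11} for the first inequality and remarks that the second inequality follows from Lemma~\ref{lm:L1}. Your spectral derivation of the first inequality is a clean self-contained proof built entirely from the apparatus of Subsection~\ref{subsec:spectral} (formulas \eqref{eq:Pn} and \eqref{eq:Cn}), and your treatment of the second inequality via $P_3=K_{1,2}$ and Lemma~\ref{lm:L1} matches the paper's remark exactly.
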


\begin{lemma}[{Lov\'asz~\cite[Lemma 3.19]{Lov11}}]
\label{lm:L3a}
The following holds for any kernel $U$ with $\|U\|_\infty\le 1$ and
every bipartite graph $G$ with minimum degree two and girth at least $4$ that
is not a single cycle or a complete bipartite graph:
\[\lvert t(G,U)\rvert\le t(C_4,U)^{5/4}.\]
\end{lemma}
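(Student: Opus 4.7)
The plan is a structural-then-analytic argument in the spirit of Lov\'asz's general approach to such universal cycle-density inequalities. First, I would use the combinatorial hypotheses (bipartite, $\delta(G)\ge 2$, girth $\ge 4$, neither a single cycle nor $K_{a,b}$) to pin down what $G$ must contain. A short ear-decomposition or degree-counting argument shows that $G$ harbours a ``witness configuration'' beyond a single cycle: either a theta-subgraph (two vertices joined by three internally disjoint paths), or two cycles sharing at most an edge and otherwise connected by a path, or a cycle with an attached ear whose endpoints create an additional short cycle. The exclusion of $K_{a,b}$ rules out the only extremal configuration in which the argument below would fail to produce an extra factor.

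For the analytic step, I would pick a Cauchy--Schwarz ``folding axis'' dictated by the witness. Concretely, choose two roots $u,v\in V(G)$ so that $G$ splits as $G = G_1^{\bullet\bullet}\oplus G_2^{\bullet\bullet}$ along $\{u,v\}$, where both pieces are nontrivial. Then by \eqref{eq:tHUmerge} combined with Lemma~\ref{lm:CS} applied to the two integrands $t_{x_u,x_v}(G_i^{\bullet\bullet},U)$, one obtains
\[t(G,U)^2 \le t\bigl(G_1^{\bullet\bullet}\oplus G_1^{\bullet\bullet},\,U\bigr)\cdot t\bigl(G_2^{\bullet\bullet}\oplus G_2^{\bullet\bullet},\,U\bigr).\]
Each doubled piece $G_i^{\bullet\bullet}\oplus G_i^{\bullet\bullet}$ is (up to the girth-$4$ hypothesis ruling out multi-edges) either a cycle $C_{2k}$, a path $P_{k+3}$, or an edge-disjoint union of such pieces, and Lemmas~\ref{lm:L1a} and~\ref{lm:L1b} convert each density into a clean power of $t(C_4,U)$.

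The arithmetic target is exactly $t(C_4,U)^{5/4}$, so the two halves must contribute exponents summing to $5/2$ after squaring. In the witness cases, the doubling of the ``cycle-containing'' half yields at least $t(C_4,U)^{1}$ (from a doubled $C_{2k}$, $k\ge 2$, via Lemma~\ref{lm:L1a}) and the doubling of the ``extra structure'' half yields at least $t(C_4,U)^{3/2}$ (from a doubled path of appropriate length via Lemma~\ref{lm:L1b}, noting that $t(P_3,U)\le t(C_4,U)^{1/2}$ by Lemma~\ref{lm:L1}). The excluded single-cycle and $K_{a,b}$ cases are precisely those where one half is trivial (a single edge) and the bound degrades to $t(C_4,U)^{k/2}$ or $t(C_4,U)^{ab/4}$, which can be strictly smaller than $5/4$.

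The main obstacle is the case analysis: for each structural witness I must select the folding axis $\{u,v\}$ so that both doubled pieces are \emph{simple} graphs (no multi-edges, achieved via girth $\ge 4$) and together realise exponent at least $5/2$. A secondary subtlety is that after doubling, some auxiliary vertices may be identified, and one must verify that the resulting graph still satisfies the hypotheses of Lemma~\ref{lm:L1a} or Lemma~\ref{lm:L1b}; the minimum-degree-$2$ condition on $G$ is exactly what ensures that each piece of the split contains enough cycle structure for this to go through.
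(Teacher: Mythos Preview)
The paper does not contain a proof of this lemma: it is quoted verbatim from Lov\'asz~\cite[Lemma 3.19]{Lov11} and used as a black box, so there is no in-paper argument to compare against. What follows therefore addresses only the internal soundness of your sketch.

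Your outline captures the right flavor of Lov\'asz's method (iterated Cauchy--Schwarz foldings that reduce everything to $t(C_4,U)$), but the specific mechanism you propose has a genuine gap. You commit to a split $G=G_1^{\bullet\bullet}\oplus G_2^{\bullet\bullet}$ along a two-vertex separator $\{u,v\}$, and the identity \eqref{eq:tHUmerge} you invoke really does require that $u,v$ separate $G$ (and form an independent set). But the hypotheses do not force $G$ to be $2$-separable: the $3$-cube $Q_3$ is bipartite, $3$-regular, has girth $4$, is neither a single cycle nor a complete bipartite graph, and is $3$-connected, so no choice of two roots splits it as you describe. Your ``witness configuration'' analysis (theta-subgraph, two cycles joined by a path, ear with an extra short cycle) tacitly assumes a $2$-cut exists, and the arithmetic you sketch (one half contributing exponent $1$, the other $3/2$) is tied to that assumption.

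Lov\'asz's actual argument in~\cite{Lov11} does not rely on a single global $2$-root split; it proceeds via a more careful structural reduction that can peel off pieces using larger root sets when necessary (this is exactly what Lemma~\ref{lm:CS} is built for), together with a case analysis on the $2$-core that handles the $3$-connected situation. If you want to repair your sketch, you would need to replace the two-root split by a decomposition that covers $3$-connected graphs as well, and then redo the exponent bookkeeping to still reach $5/4$.
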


\begin{lemma}[{Lov\'asz~\cite[Lemma 3.21]{Lov11}}]
\label{lm:L3c}
The following holds for any kernel $U$ with $\|U\|_\infty\le 1$ and
any tree $T$ that is not a star:
\[\lvert t(T,U)\rvert\le t(P_3,U)t(C_4,U)^{1/4}.\]
\end{lemma}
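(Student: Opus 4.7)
The plan is to split $T$ at a well-chosen internal edge and reduce to a spectral operator inequality. Since $T$ is not a star, its diameter is at least $3$, so there exists an edge $bc\in E(T)$ with both endpoints internal in $T$ (take, for instance, the second and third vertices on a longest path). Removing $bc$ partitions $T$ into two rooted subtrees $T_b^\bullet$ and $T_c^\bullet$, each on at least two vertices. By~\eqref{eq:tHUmerge},
\[
t(T, U) \;=\; \int_{[0,1]^2} t_b(T_b^\bullet, U)\, U(b,c)\, t_c(T_c^\bullet, U)\, db\, dc \;=\; \langle f, Ug\rangle_{L_2[0,1]},
\]
where $f(b) = t_b(T_b^\bullet, U)$, $g(c) = t_c(T_c^\bullet, U)$, and $U$ is regarded as the associated symmetric integral operator on $L_2[0,1]$.

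Next, I would apply Cauchy-Schwarz together with the operator-norm identity $\|U\|_{\mathrm{op}} = |\lambda_1|$ and the bound $|\lambda_1|^4 \le \sum_i \lambda_i^4 = t(C_4, U)$ coming from~\eqref{eq:Cn} to obtain
\[
|t(T, U)| = |\langle f, Ug\rangle| \;\le\; \|f\|_2 \cdot t(C_4, U)^{1/4} \cdot \|g\|_2.
\]
This already extracts the target factor $t(C_4, U)^{1/4}$, so it remains to prove $\|f\|_2^2,\|g\|_2^2 \le t(P_3, U)$.

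The central claim I would prove, by strong induction on $|H|$, is that for any rooted tree $H^\bullet$ whose root $r$ has at least one child and any kernel $U$ with $\|U\|_\infty \le 1$, one has $\int_{[0,1]} t_r(H^\bullet, U)^2\, dr \le t(P_3, U)$. The base case $|H|=2$ reduces to $t_r(H^\bullet, U) = \deg_U(r)$ and gives equality. For the inductive step I distinguish two cases. If $r$ has at least two children, write $t_r(H^\bullet, U)$ as a product of the branch densities $t_r(H_i^\bullet, U)$; each is bounded in absolute value by $1$, so squaring and dropping all but one factor reduces matters to a strictly smaller branch $H_1^\bullet$ to which induction applies. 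If $r$ has exactly one child $c$ with subtree $S^\bullet$, then $t_r(H^\bullet, U) = \int U(r,c)\,t_c(S^\bullet, U)\,dc$; squaring, integrating over $r$, and applying Cauchy-Schwarz in $(c_1, c_2)$ together with the identity $\int\!\int t_{c_1,c_2}(P_3^{\bullet\bullet}, U)^2\, dc_1\, dc_2 = t(C_4, U)$ and the inductive hypothesis on $S^\bullet$ yields $\int t_r(H^\bullet, U)^2\, dr \le t(C_4, U)^{1/2}\cdot t(P_3, U) \le t(P_3, U)$, where the last step uses $t(C_4, U)\le 1$ (which in turn comes from $\|U\|_\infty \le 1$). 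Applied to $T_b^\bullet$ and $T_c^\bullet$, the claim gives $\|f\|_2^2,\|g\|_2^2 \le t(P_3, U)$, and multiplying the three bounds produces the desired inequality $|t(T,U)| \le t(P_3, U)\cdot t(C_4, U)^{1/4}$.

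The step I expect to be the main obstacle is the single-child inductive case above: the Cauchy-Schwarz manipulation leaves behind an extra $t(C_4, U)^{1/2}$ that must be absorbed via $t(C_4, U)\le 1$, so the hypothesis $\|U\|_\infty\le 1$ is essential and must be tracked carefully. Keeping the induction clean through both branching ($\ge 2$ children) and chain-like ($1$ child) vertices, and arranging that the chosen split edge $bc$ always yields rooted subtrees $T_b^\bullet,T_c^\bullet$ whose roots have at least one child, are the aspects of the argument requiring the most care.
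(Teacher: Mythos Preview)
Your argument is correct. The paper does not supply its own proof of this lemma but simply cites Lov\'asz~\cite[Lemma 3.21]{Lov11}, so there is nothing in the paper to compare against; your approach---splitting $T$ at an edge with two internal endpoints, extracting the factor $t(C_4,U)^{1/4}$ via the operator-norm identity $\|U\|_{\mathrm{op}}=|\lambda_1|\le t(C_4,U)^{1/4}$, and bounding $\|t_r(H^\bullet,U)\|_2^2\le t(P_3,U)$ by a clean induction on rooted subtrees---is a valid self-contained proof in the spirit of the spectral/Cauchy--Schwarz toolkit the paper already quotes from~\cite{Lov11}.
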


\begin{lemma}[{Lov\'asz~\cite[Lemma 3.22]{Lov11}}]
\label{lm:L3b}
The following holds for any kernel $U$ with $\|U\|_\infty\le 1$ and
every bipartite graph $G$ with girth at least $4$ that has exactly one vertex of degree one:
\[\lvert t(G,U)\rvert\le\frac{\left(t(C_4,U)+t(P_3,U)\right)\cdot t(C_4,U)^{1/8}}{2}.\]
\end{lemma}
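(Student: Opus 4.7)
The plan is to peel off the unique pendant edge of $G$, apply Cauchy--Schwarz in order to reduce the problem to a graph that falls under Lemma~\ref{lm:L3a}, and then finish with the AM--GM inequality. Let $v$ be the unique vertex of degree one in $G$, let $u$ be its neighbor, set $G_1=G-v$, and write $G_1^\bullet$ for $G_1$ rooted at $u$. Since $d(x):=\deg_U(x)=t_x(P_2^\bullet,U)$, we can split off the pendant edge and write
\[t(G,U)=\int_{[0,1]}t_x(G_1^\bullet,U)\,d(x)\,dx.\]
Cauchy--Schwarz then yields
\[t(G,U)^2\;\le\;\left(\int_{[0,1]}t_x(G_1^\bullet,U)^2\,dx\right)\cdot\left(\int_{[0,1]}d(x)^2\,dx\right)\;=\;t(D,U)\cdot t(P_3,U),\]
where $D:=G_1^\bullet\oplus G_1^\bullet$ is the graph obtained by taking two disjoint copies of $G_1$ and identifying the two copies of $u$; note $t(D,U)\ge 0$ since it equals the integral of a square.

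The next step is to verify that $D$ meets the hypotheses of Lemma~\ref{lm:L3a}. First, $D$ is bipartite because $G_1$ is, and $D$ has girth at least $4$: any cycle that used edges from both copies of $G_1$ would have to pass through the only shared vertex $u$ twice, which is impossible for a simple cycle, so every cycle of $D$ lies in a single copy of $G_1$ and hence inherits length at least $4$ from $G$. For the minimum-degree-$2$ condition, the hypothesis that $v$ is the \emph{unique} degree-one vertex of $G$ implies that every vertex $w\neq u,v$ has $\deg_{G_1}(w)=\deg_G(w)\ge 2$, while $\deg_{G_1}(u)=\deg_G(u)-1\ge 1$; doubling at $u$ therefore boosts the degree of $u$ in $D$ to at least $2$. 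Finally, $u$ is a cut vertex of $D$, which immediately rules out $D$ being a single cycle or a complete bipartite graph $K_{a,b}$ with $a,b\ge 2$, while the remaining option that $D$ is a star $K_{1,c}$ is excluded by the minimum degree being at least $2$ (and we may discard the trivial case $|G|\le 2$, for which the bound is immediate). Applying Lemma~\ref{lm:L3a} to $D$ gives $t(D,U)\le t(C_4,U)^{5/4}$, hence
\[|t(G,U)|\;\le\;t(C_4,U)^{5/8}\,t(P_3,U)^{1/2}.\]

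The desired inequality now follows from AM--GM: since the product of $t(C_4,U)^{9/8}$ and $t(P_3,U)\,t(C_4,U)^{1/8}$ equals $t(C_4,U)^{10/8}\,t(P_3,U)$,
\[\frac{(t(C_4,U)+t(P_3,U))\cdot t(C_4,U)^{1/8}}{2}\;=\;\frac{t(C_4,U)^{9/8}+t(P_3,U)\,t(C_4,U)^{1/8}}{2}\;\ge\;t(C_4,U)^{5/8}\,t(P_3,U)^{1/2},\]
which combined with the previous display gives the claimed bound.

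The main obstacle I anticipate is the bookkeeping around the exceptional graphs in Lemma~\ref{lm:L3a}: one has to use the "exactly one vertex of degree one" assumption carefully to conclude that $D$ is not a cycle, not a complete bipartite graph, and that every vertex of $D$ (including the identified copy of $u$) has degree at least $2$. Once this case analysis is nailed down, the analytic core of the argument is just a standard Cauchy--Schwarz together with an AM--GM, and the rest is essentially automatic.
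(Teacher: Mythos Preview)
Your argument is correct. The paper does not prove this lemma itself; it merely quotes it from Lov\'asz~\cite[Lemma~3.22]{Lov11}. Your approach---splitting off the pendant edge, applying Cauchy--Schwarz to reduce to the doubled graph $D=G_1^\bullet\oplus G_1^\bullet$, invoking Lemma~\ref{lm:L3a} on $D$, and finishing with AM--GM---is exactly the natural route and is in the spirit of the arguments in~\cite{Lov11}.

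A couple of minor remarks on your bookkeeping, none of which affects correctness. First, the hypothesis that $G$ has exactly one vertex of degree one already forces $G$ to contain a cycle (a tree with at least two vertices has at least two leaves), so together with girth at least $4$ you get $|G|\ge 4$ and $|G_1|\ge 3$; the ``trivial case $|G|\le 2$'' you mention therefore never arises. Second, your claim that $u$ is a cut vertex of $D$ is the clean way to exclude both cycles and $K_{a,b}$ with $a,b\ge 2$, and the star $K_{1,c}$ is ruled out by your minimum-degree-$2$ check; you might also note for completeness that $t(C_4,U)\ge 0$ and $t(P_3,U)\ge 0$ (the former by \eqref{eq:Cn}, the latter by Lemma~\ref{lm:L1}) so that the fractional powers and the AM--GM step are legitimate.
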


\subsection{Entropy based estimates}
\label{subsec:entropy}

We next estimate the density of a graph $K_{a|\ell,b}$.
The next lemma asserts that
a graph $K_{a|\ell,b}$ has the Sidorenko property in a strong sense as $t(K_{1,2},G)\ge t(K_2,G)^2$.
Our proof is based on the entropy argument,
which has been pioneered in this context by Szegedy~\cite{Sze15}.

\begin{lemma}
\label{lm:entropy}
Let $a$, $b$ and $\ell$ be any positive even integers.
It holds that for any graph $G$, 
\[t(K_{a|\ell,b},G)\ge t(K_{1,2},G)^{(ab+\ell)/2}.\]
\end{lemma}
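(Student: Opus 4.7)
The plan is to use the entropy method pioneered by Szegedy, in two phases: first a reduction to a tree $G'$, and then a cherry-pairing entropy argument on $G'$.

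First I would factor
\[
t(K_{a|\ell, b}, W) \;=\; \int_{[0,1]} (W^\ell\mathbf{1})(v_\ell)\, t_{v_\ell}(K_{a, b}^\bullet, W)\, dv_\ell,
\]
expand $t_z(K_{a, b}^\bullet, W) = \int \psi(z, \mathbf{z})^b\, d\mathbf{z}$ with $\psi(z, \mathbf{z}) := \int W(z, y)\prod_{i=1}^{a-1} W(z_i, y)\, dy$, and apply Jensen's inequality ($x \mapsto x^b$ on the uniform probability measure $d\mathbf{z}$ on $[0,1]^{a-1}$) to obtain $t_z(K_{a,b}^\bullet, W) \ge (Wd^{a-1})(z)^b$, where $(Wd^{a-1})(z):=\int W(z,y)d(y)^{a-1}\,dy$. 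This yields $t(K_{a|\ell, b}, W) \ge t(G', W)$, where $G'$ is the tree on $\ell+1+ab$ vertices and $\ell+ab$ edges obtained by replacing the shared $K_{a,b}$-part of $K_{a|\ell, b}$ with $b$ \emph{disjoint} copies of $K_{1, a-1}$, each attached through an intermediate vertex $y_j$ to the path endpoint $v_\ell$. The crucial feature is that $G'$ is a tree whereas $K_{a|\ell, b}$ is not.

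Next I would pair the $\ell+ab$ edges of $G'$ into $M := (\ell+ab)/2$ cherries (copies of $K_{1,2}$) centered at the odd-indexed path vertices and at the $y_j$'s, using for each $y_j$ a fixed matching of its $a$ incident edges; this is possible since $a$ and $\ell$ are even. Because $G'$ is a tree whose cherry-centers form an independent set, the bipartite cherry--endpoint incidence graph is a forest: within each $y_j$-leg the $a/2$ cherries have pairwise-disjoint endpoint pairs, and cross-leg cherries share at most the single vertex $v_\ell$. I then build a probability density $\mu$ on $[0,1]^{V(G')}$ by sampling sequentially along each incidence-forest component, the root cherry drawn from the canonical cherry distribution $\sigma(u, v, w) := W(u,v)W(v,w)/\tau$ (with $\tau := t(K_{1,2}, W)$), and every subsequent cherry drawn from the $\sigma$-conditional distribution given its unique shared endpoint with previously-sampled cherries; by construction each cherry's $3$-marginal under $\mu$ equals $\sigma$.

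Finally, the Gibbs variational inequality
\[
\log t(G', W)\;\ge\; h(\mu) + \sum_{uv\in E(G')}\mathbb{E}_\mu\!\left[\log W(\xi_u, \xi_v)\right],
\]
together with the single-cherry identity $\log\tau = h(\sigma) + 2\,\mathbb{E}_\sigma[\log W]$, makes the edge-sum equal $M(\log\tau - h(\sigma))$; telescoping the chain-rule entropy along the sampling order gives $h(\mu) = M\,h(\sigma) - (M - T)\,h(\sigma_1)$, where $T$ is the number of components of the incidence forest and $\sigma_1(z) := (Wd)(z)/\tau$ is the leaf-marginal of $\sigma$. Substituting yields
\[
\log t(G', W)\;\ge\; M \log\tau - (M-T)\,h(\sigma_1),
\]
and since $\sigma_1$ is a probability density on $[0,1]$, where the uniform density maximizes differential entropy, we have $h(\sigma_1) \le 0$, so the correction term is nonnegative and $t(G', W) \ge \tau^{M} = t(K_{1,2}, W)^{(ab+\ell)/2}$. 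The subtlest step will be verifying that the cherry--endpoint incidence graph is genuinely a forest, which is exactly what the reduction to $G'$ buys us: applied directly to $K_{a|\ell, b}$, any cherry pairing creates cycles in the incidence graph through the shared $K_{a,b}$-endpoints across different $y_j$'s, and the telescoping entropy bound would degrade below the required threshold.
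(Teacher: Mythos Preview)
Your two-phase strategy is sound and Phase~1 (the Jensen reduction $t(K_{a|\ell,b},W)\ge t(G',W)$) is correct. Phase~2, however, has a genuine gap in the construction of $\mu$: the $a/2$ cherries centred at a given $y_j$ share not only no endpoints but also their \emph{centre} $y_j$, and your sampling scheme ignores this. Concretely, the cherries at $y_j$ that do not contain $v_\ell$ are singleton components of your cherry--endpoint incidence forest, so you sample each of them afresh from $\sigma$; this assigns a new independent value to $y_j$ for every such cherry, and the resulting object is not a density on $[0,1]^{V(G')}$ at all (it lives on a space with $b(a/2-1)$ extra coordinates). Projecting back to $V(G')$ by discarding the extra copies does not help: the edges $y_jw_{j,2k}$ then have independent endpoints under $\mu$, so $\mathbb{E}_\mu[\log W(y_j,w_{j,2k})]$ can be $-\infty$. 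Nor is it clear that $t(G',W)\ge t(G'',W)$ for the split graph $G''$, so you cannot simply apply Gibbs on $G''$.

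The repair is easy: track centre-sharing as well as endpoint-sharing. After sampling the cherry $(v_\ell,y_j,w_{j,1})$ (which gives $y_j$ the centre marginal $\sigma_c(v)=d(v)^2/\tau$), sample each remaining cherry at $y_j$ from $\sigma$ conditioned on its centre; since the centre has law $\sigma_c$, these cherries again have $3$-marginal $\sigma$ and the edge sum is unchanged. The entropy telescoping acquires an extra term $-b(a/2-1)\,h(\sigma_c)$, but $h(\sigma_c)\le 0$ for the same reason as $h(\sigma_1)\le 0$, so $\log t(G',W)\ge M\log\tau$ still follows.

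For comparison, the paper's proof works directly on $K_{a|\ell,b}$ with no tree reduction and no cherry pairing: it samples $y$ from the cherry-centre marginal, then $x_1,\dots,x_a$ as i.i.d.\ neighbours of $y$, then $y_1,\dots,y_b$ conditionally i.i.d.\ given $(x_1,\dots,x_a)$, then extends along the path two edges at a time; the bookkeeping uses only $H(y)\le\log n$ and $H(x_1,\dots,x_a)\le a\log n$. Your Jensen step is a pleasant simplification that collapses the bipartite core to a tree, but it trades that work for the need to handle centre-sharing, which your incidence-forest formulation currently omits.
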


\begin{proof}
Fix integers $a$, $b$ and $\ell$, and a graph $G$, and
let $n$ be the number of vertices of $G$.
We say that a triple $(x_1,x_2,y)$ of vertices of $G$ is \emph{feasible} if $x_1y$ and $x_2y$ are edges of $G$.
We define several probability distributions on vertices of $G$ and their tuples.
The first distribution is the uniform distribution on $t(K_{1,2},G)n^3$ feasible triples $(x_1,x_2,y)$;
note that the entropy of this distribution is
\begin{equation}
H(x_1,x_2,y)=3\log n+\log t(K_{1,2},G).\label{eq:entropy}
\end{equation}
The second distribution is the distribution on $(a+1)$-tuples $(x_1,\ldots,x_a,y)$ that
we first choose $y$ according to the marginal distribution of the uniform distribution on feasible triples and
we then choose $x_1,\ldots,x_a$ to be neighbors of $y$ chosen uniformly and independently of each other.
Observe that the marginal distribution on $y$ and any two of the vertices $x_1,\ldots,x_a$
is the uniform distribution on feasible triples and so the following holds:
\[H(x_1,\ldots,x_a,y) = \frac{a}{2}H(x_1,x_2|y)+H(y) 
                      = \frac{a}{2}H(x_1,x_2,y)-\frac{a-2}{2}H(y) 
		      \ge \frac{a}{2}H(x_1,x_2,y)-\frac{a-2}{2}\log n.
\]
The third distribution is the distribution on $(a+b)$-tuples $(x_1,\ldots,x_a,y_1,\ldots,y_b)$ that
we first choose $x_1,\ldots,x_a$ according to the marginal distribution derived from the second distribution and
we then choose each of $y_1,\ldots,y_b$ according to the second distribution conditioned on the choice of $x_1,\ldots,x_a$.
Observe that
\begin{align*}
H(x_1,\ldots,x_a,y_1,\ldots,y_b) & = b\cdot H(y|x_1,\ldots,x_a)+H(x_1,\ldots,x_a)\\
                                 & = b\cdot H(x_1,\ldots,x_a,y)-(b-1)H(x_1,\ldots,x_a)\\
				 & \ge b\cdot H(x_1,\ldots,x_a,y)-a(b-1)\log n\\
				 & \ge \frac{ab}{2}H(x_1,x_2,y)-\frac{(a-2)b}{2}\log n-a(b-1)\log n\\
				 & = \frac{ab}{2}H(x_1,x_2,y)-\frac{3ab-2a-2b}{2}\log n.
\end{align*}
Finally, the last distribution,
which is a distribution on $(a+b+\ell)$-tuples of vertices, is obtained as follows.
We first choose $(x_1,\ldots,x_a,y_1,\ldots,y_b)$ according to the third distribution;
note that the marginal distribution of $x_1$ is the same as its marginal distribution of the uniform distribution on feasible triples.
Next choose $(x_1,y'_1,x'_1)$ uniformly among all feasible triples with the first coordinate equal to $x_1$,
then $(x'_1,y'_2,x'_2)$ uniformly among all feasible triples with the first coordinate equal to $x'_1$,
then $(x'_2,y'_3,x'_3)$ uniformly among all feasible triples with the first coordinate equal to $x'_2$
and so on until the triple $(x'_{\ell/2-1},y'_{\ell/2},x'_{\ell/2})$ has been chosen.
Observe that
\[
H(x_1,y'_1,x'_1,\ldots,y'_{\ell/2},x'_{\ell/2})  =\frac{\ell}{2} H(y,x_2|x_1)+H(x_1)
                                                 =\frac{\ell}{2} H(x_1,y,x_2)-\left(\frac{\ell}{2}-1\right)H(x_1).
\]
It follows that $H(x_1,\ldots,x_a,y_1,\ldots,y_b,y'_1,x'_1,\ldots,y'_{\ell/2},x'_{\ell/2})$ is equal to
\begin{align*}
 & H(x_1,y'_1,x'_1,\ldots,y'_{\ell/2},x'_{\ell/2})+H(x_1,\ldots,x_a,y_1,\ldots,y_b)-H(x_1) \\
 & \ge \frac{ab+\ell}{2}H(x_1,x_2,y)-\frac{3ab-2a-2b}{2}\log n-\frac{\ell}{2}H(x_1) \\
 & \ge \frac{ab+\ell}{2}H(x_1,x_2,y)-\frac{3ab-2a-2b+\ell}{2}\log n \\
 & = (a+b+\ell)\log n+\frac{ab+\ell}{2}\log t(K_{1,2},G),
\end{align*}
where the last inequality is implied by \eqref{eq:entropy}.
Since $H(x_1,\ldots,x_a,y_1,\ldots,y_b,y'_1,x'_1,\ldots,y'_{\ell/2},x'_{\ell/2})$
is at most $\log\left(t(K_{a|\ell,b},G)n^{a+b+\ell}\right)$,
we conclude that $\log t(K_{a|\ell,b},G)\ge\frac{ab+\ell}{2}\log t(K_{1,2},G)$,
i.e., $t(K_{a|\ell,b},G)\ge t(K_{1,2},G)^{(ab+\ell)/2}$ as desired.
\end{proof}

Our second estimate on the density of a graph $K_{a|\ell,b}$ is a quantitative bound
on the density of $K_{a|\ell,b}$ in a graphon $W$
depending on its distance from the constant graphon with the same density.

\begin{lemma}
\label{lm:Kab}
For all positive even integers $a$, $b$ and $\ell$ such that ${\ell} \leq ab/4$,
it holds that
\[t(K_{a|\ell,b},W)\ge p^{ab+\ell}\left(1+10^{-9}\cut{W-p}^{16}\right))^{ab/4+\ell/4}\]
for every graphon $W$ with density $p$.
\end{lemma}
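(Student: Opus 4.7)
The plan is to split into two cases depending on whether Lemma~\ref{lm:entropy} alone suffices. Set $\beta := \cut{W-p}$. By Lemma~\ref{lm:entropy} (and its straightforward extension to graphons via sampling), $t(K_{a|\ell,b}, W) \ge t(K_{1,2}, W)^{(ab+\ell)/2}$, so whenever $t(K_{1,2}, W) \ge p^2\sqrt{1 + 10^{-9}\beta^{16}}$, the claim follows immediately by raising to the $(ab+\ell)/2$ power. Since $t(K_{1,2}, W) = p^2 + \|\deg_W - p\|_2^2$, this irregular-degree case handles graphons whose degree function deviates noticeably from the constant~$p$.

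The complementary near-regular case has $\|\deg_W - p\|_2^2 < p^2(\sqrt{1 + 10^{-9}\beta^{16}} - 1)$, and I plan to use the spectral decomposition of $W$ from Subsection~\ref{subsec:spectral}. Writing $K(x) := t_x(K_{a,b}^\bullet, W)$ and expanding $K = \sum_i c_i f_i$ with $c_i = \langle K, f_i\rangle$, the path attachment gives
\[t(K_{a|\ell,b}, W) = \int_{[0,1]} K(x)(T_W^\ell 1)(x)\, dx = \sum_i c_i \lambda_i^{\ell}\cos\alpha_i.\]
The $i=1$ contribution is at least $p^\ell\, c_1\cos\alpha_1$ using $\lambda_1 \ge p$ and Perron--Frobenius positivity of $f_1$; Parseval combined with $\sum_{i\ge 2}\cos^2\alpha_i \le 2\delta$ from~\eqref{eq:sumcosa} yields $c_1\cos\alpha_1 \ge t(K_{a,b}, W) - \sqrt{2\delta}$. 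For $i \ge 2$, I invoke $|\lambda_i|\le \gamma^{1/4}$ from~\eqref{eq:lambdasmall} and Cauchy--Schwarz to bound $\bigl|\sum_{i\ge 2} c_i\lambda_i^\ell\cos\alpha_i\bigr| \le \gamma^{\ell/4}\sqrt{2\delta}$, resulting in
\[t(K_{a|\ell,b}, W) \ge p^\ell\, t(K_{a,b}, W) - \sqrt{2\delta}\bigl(p^\ell + \gamma^{\ell/4}\bigr).\]

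To conclude, I combine this with Lemma~\ref{lm:K2n2nC4a} and Lemma~\ref{lm:gammagamma}, giving $t(K_{a,b}, W) \ge t(C_4, W)^{ab/4} \ge (p^4 + \beta^4/8)^{ab/4}$. In the near-regular case the error term is negligible because $\delta \le \gamma/p^4 \le 4\beta/p^4$ from Subsection~\ref{subsec:spectral}, so the estimate simplifies to $t(K_{a|\ell,b}, W) \ge p^{ab+\ell}(1 + \beta^4/(8p^4))^{ab/4}(1 - \epsilon)$ for a small correction $\epsilon$. A logarithmic comparison, using $\ell\le ab/4$ and $\beta\le 1$, then verifies that this exceeds $p^{ab+\ell}(1 + 10^{-9}\beta^{16})^{(ab+\ell)/4}$; the constraint $\ell\le ab/4$ enters precisely so that the gain from the $K_{a,b}$-part (exponent $ab/4$) can absorb the redistribution over the full exponent $(ab+\ell)/4$ in the target.

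The main obstacle is the near-regular case, where the spectral error $\sqrt{2\delta}(p^\ell + \gamma^{\ell/4})$ must be shown to be strictly smaller than the slack in $p^\ell t(K_{a,b}, W) - p^{ab+\ell}(1+10^{-9}\beta^{16})^{(ab+\ell)/4}$. Achieving this requires careful bookkeeping of the quantitative relations from Subsection~\ref{subsec:spectral} (in particular $\delta \le 4\beta/p^4$, $\gamma\le 4\beta$, and the bound $\|\deg_W - p\|_2 = O(p\beta^{8})$ forced by the near-regular hypothesis), combined with the elementary inequality $(1+x)^k \ge 1+kx/2$ for $x \in [0,1]$ used to compare $(1+\beta^4/(8p^4))^{ab/4}$ against the target factor $(1+10^{-9}\beta^{16})^{(ab+\ell)/4}$.
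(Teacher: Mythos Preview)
Your irregular-degree case via Lemma~\ref{lm:entropy} is correct and matches the paper's second case. The problem lies in your near-regular case.

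The spectral error bound $\sqrt{2\delta}\bigl(p^\ell+\gamma^{\ell/4}\bigr)$ is an \emph{additive} loss, and you must show it is dominated by the gain in the main term, which is of order $p^{ab+\ell}\bigl((1+\beta^4/(8p^4))^{ab/4}-1\bigr)$. The only control on $\delta$ you invoke is the generic bound $\delta\le\gamma/p^4\le 4\beta/p^4$ from Subsection~\ref{subsec:spectral}; the near-regular hypothesis $\|\deg_W-p\|_2=O(p\beta^8)$ does \emph{not} by itself force $\delta$ small (small degree variance only gives $\sum_i(\lambda_i-p)^2\cos^2\alpha_i$ small, which says nothing about $\cos^2\alpha_i$ for eigenvalues $\lambda_i$ close to $p$). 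With $\delta\le 4\beta/p^4$, your error is of order $\sqrt{\beta}\,p^{\ell-2}$, while the gain is at most of order $p^{ab+\ell-4}\cdot ab\,\beta^4$ when $\beta\lesssim p$. You would thus need $\beta^{7/2}\gtrsim p^{2-ab}/ab$, which fails badly once $ab$ is large and $p<1$: for instance with $p=1/2$, $\beta=p/2$, and $ab=100$, the required inequality is off by a factor exponential in $ab$. Since the lemma places no upper bound on $a,b$ and no lower bound on $p$, this is a genuine gap, not just loose bookkeeping.

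The paper sidesteps the issue entirely. In the near-regular case it does not use the spectral decomposition at all: it sets $A=\{x:\deg_W(x)\le p(1-0.001\varepsilon^5)\}$, shows $|A|$ is small, zeroes $W$ out on $A$ to obtain $W_1$, and then walks along the $\ell$-edge path using the pointwise lower bound $\deg_{W_1}(x)\ge p(1-0.002\varepsilon^5)$ for $x\notin A$. This gives a \emph{multiplicative} factor $\ge \bigl(p(1-0.002\varepsilon^5)\bigr)^\ell$ relating $t(K_{a|\ell,b},W_1)$ to $t(K_{a,b},W_1)$, with no additive spectral error to absorb. The bound on $t(K_{a,b},W_1)$ then comes from Lemmas~\ref{lm:gammagamma} and~\ref{lm:K2n2nC4a}, and the constraint $\ell\le ab/4$ is used only to redistribute exponents at the very end.
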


\begin{proof}
Fix integers $a$, $b$ and $\ell$, and a graphon $W$ with density $p$;
set $\varepsilon=\cut{W-p}$.
Let $A$ be the set of $x\in [0,1]$ such that $\deg_W(x)\le p(1-0.001\varepsilon^5)$.
We distinguish two cases depending on the measure of $A$.

The first case is the case when $\mu(A)<0.001p\varepsilon^5$,
i.e., the degree of most of the vertices of $W$ is close to $p$.
Consider the graphon $W_1$ defined as
\[W_1(x,y)=\begin{cases}
          0 & \mbox{if $x\in A$ or $y\in A$, and} \\
	  W(x,y) & \mbox{otherwise.}
          \end{cases}\]
Observe that
\begin{equation}
\deg_{W_1}(x)\ge p(1-0.001\varepsilon^5)-\mu(A)\geq  p(1-0.002\varepsilon^5)\label{eq:essminW1}
\end{equation}
for every $x\not\in A$.
It follows that
the density $p_1$ of $W_1$ is at least
\[ p_1 \geq (1-\mu(A)) (1-0.002p\varepsilon^5)\geq (1-0.001p\varepsilon^5)p(1-0.002\varepsilon^5) \geq p(1-0.003\varepsilon^5).\] 
Note that the cut distance between $W$ and $W_1$ is at most $2\mu(A)+\mu(A)^2\leq 3\mu(A)\leq 0.003p\varepsilon^5$.
Hence, the triangle inequality implies that
\[\cut{W_1-p_1}\ge \cut{W-p} - \cut{W-W_1} - \lvert p-p_1\rvert \ge \varepsilon-0.006p \varepsilon^5 \ge 0.994\varepsilon.\]
Lemma~\ref{lm:gammagamma} now yields that
\[t(C_4,W_1) \ge p_1^4+\frac{0.994^4\varepsilon^4}{8} 
             \ge p_1^4(1+0.122\varepsilon^4)
             \ge p^4\left(1-0.003\varepsilon^5\right)^4(1+0.122\varepsilon^4)
             \ge p^4\left(1+0.1\varepsilon^4\right).
	     \]
Hence, Lemma \ref{lm:K2n2nC4a} implies (the last inequality follows from $\ell\leq ab/4$) that
\[t(K_{a,b},W_1)\ge t(C_4, W_1)^{ab/4} 
  \geq p^{ab}\left(1+0.1\varepsilon^4\right)^{ab/4}
  \geq p^{ab}\left(1+0.03\varepsilon^4\right)^{ab/4+2\ell}.
  \]
Therefore, we obtain using \eqref{eq:essminW1} that
\[
t(K_{a|\ell,b},W_1) \ge p^{ab+\ell}\left(1+0.03\varepsilon^4\right)^{ab/4+2\ell}\left(1-0.002\varepsilon^5\right)^\ell
		    \ge p^{ab+\ell}\left(1+0.03\varepsilon^4\right)^{ab/4+\ell}.
\]		    
As $t(K_{a|\ell,b},W) \geq t(K_{a|\ell,b},W_1)$, the estimate in the statement of the lemma follows.

We next analyze the complementary case,
i.e., the case when the measure of $A$ is at least $0.001p\varepsilon^5$.
We obtain the following estimate on $t(K_{1,2},W)$ using the Cauchy-Schwarz inequality:
\begin{align*}
t(K_{1,2},W) & = 
\int_{[0,1]} \deg_W(x)^2 \dd x = \int_{A} \deg_W(x)^2 \dd x + \int_{[0,1]\setminus A} \deg_W(x)^2 \dd x \\
& \geq 
\frac{1}{\mu(A)}\left(\int_{A}\deg_W(x)\dd x\right)^2 +
 \frac{1}{1-\mu(A)}\left(\int_{[0,1]\setminus A}\deg_W(x)\dd x\right)^2 \\
& = 
\frac{1}{\mu(A)}\left(\int_{A}\deg_W(x)\dd x\right)^2 +
 \frac{1}{1-\mu(A)}\left(p-\int_{A}\deg_W(x)\dd x\right)^2 \\
& = 
 p^2 + \frac{\mu(A)}{1-\mu(A)}\left(p-\frac{\int_{A}\deg_W(x)\dd x}{\mu(A)}\right)^2 \\
& \geq 
 p^2 + \frac{0.001p\varepsilon^5}{1-0.001p\varepsilon^5}(0.001p\varepsilon^5)^2 \geq
 p^2 + 10^{-9}\varepsilon^{15}p^3.
\end{align*}
Observe that $\varepsilon=\cut{W-p}\le\cut{W}+\cut{p}=2p$ and so
$t(K_{1,2},W)\ge p^2(1+10^{-9}\varepsilon^{16}/2)$.
Therefore, Lemma~\ref{lm:entropy} now yields that
\[
t(K_{a|\ell,b},W) \ge
t(K_{1,2},W)^{(ab+\ell)/2}\ge
p^{ab+\ell}(1+10^{-9}\varepsilon^{16}/2)^{ab/2+\ell/2} \ge
p^{ab+\ell}(1+10^{-9}\varepsilon^{16})^{ab/4+\ell/4}.
\]
The estimate given in the statement of the lemma is now established.
\end{proof}

\section{Locally Sidorenko component}
\label{sec:locSidorenko}

In this section,
we establish that
a certain family of graphs that contain any fixed high girth graph as an induced subgraph
has the Sidorenko property in a local sense.
As discussed in Subsections~\ref{subsec:motivation} and~\ref{subsec:overview}, 
the main contribution of this section is Theorem~\ref{thm:core} asserting that
the value of $\varepsilon_0$ is independent of $\ell$ and so,
when $H^\bullet$ is fixed, of the graph $H^\bullet\oplus P_{\ell}^\bullet$.
The proof is inspired by the proof of an extension of a result by Lov\'asz~\cite{Lov11} due to Fox and the last author~\cite{FoxW17,FoxW},
however, it does not follow from either of these results even when the bound is not required to be uniform.
We remark that the lower bound of $50$ on the girth of $G$ given in Theorem~\ref{thm:core}
can certainly be decreased, possibly all the way to $4$,
at the expense of making the proof of Theorem~\ref{thm:core} significantly more technical;
we decided to prove the theorem with the lower bound of $50$
as proving it with an even larger lower bound on the girth would not simplify the proof substantially.

\begin{theorem}
\label{thm:core}
For every graph $G$ with girth at least $50$ and 
every real number $p_0 \in (0,1)$,
there exist a real $\varepsilon_0>0$ and
a connected rooted graph $H^\bullet$ containing $G$ as an induced subgraph
such that the following holds
for every graphon $W$ with density $p \geq p_0$ and $\cut{W-p}\leq\varepsilon_0$:
\[t(H^\bullet\oplus P_{\ell}^\bullet,W)\ge p^{\nume{H}+\ell-1}\]
for every integer $\ell\ge 9$.
\end{theorem}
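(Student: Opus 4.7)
The plan is a spectral analysis of the operator associated with $W$, in the spirit of~\cite{Lov11, FoxW17}. I would construct $H^\bullet$ by attaching a small rooted gadget to a vertex of $G$ so that $G$ remains an induced subgraph, the girth of $H$ stays at least $50$, and—crucially—$H$ itself has \emph{even} girth, so that Theorem~\ref{thm:girtheven} applies to $H$ with a threshold depending only on the constant $\|H\|$ (a function of the fixed graph $G$). The starting identity is
\[
t(H^\bullet\oplus P_\ell^\bullet,W)
\;=\;\int_0^1 u(x)\,(W^{\ell-1}j)(x)\,\dd x
\;=\;\sum_{i\ge 1}\lambda_i^{\ell-1}\cos\alpha_i\,c_i,
\]
where $u(x):=t_x(H^\bullet,W)$, $c_i:=\langle u,f_i\rangle$, and $j\equiv 1$. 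I then split into the principal term $i=1$ and the tail $i\ge 2$.

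For the principal term I would use the identity $\cos\alpha_1 c_1 = t(H,W) - \langle u,r\rangle - \sum_{i\ge 2}\cos\alpha_i c_i$, where $r$ is the projection of $j$ onto $\ker W$. Theorem~\ref{thm:girtheven} applied to $H$ gives $t(H,W)\ge p^{\|H\|}$, and the two correction terms are controlled using $\|r\|_2^2\le 2\delta$, $\sum_{i\ge 2}\cos^2\alpha_i\le 2\delta$ from \eqref{eq:sumcosa}, and $\|u\|_2\le 1$. Combined with $\lambda_1\ge p$, this produces a principal contribution of about $p^{\|H\|+\ell-1}$ times a slackness factor. For the tail, the bound $|\lambda_i|\le\gamma^{1/4}$ from \eqref{eq:lambdasmall} with $\gamma\le 4\cut{W-p}$, together with Cauchy--Schwarz and \eqref{eq:sumcosa}, yields
\[
\Bigl|\sum_{i\ge 2}\lambda_i^{\ell-1}\cos\alpha_i c_i\Bigr|
\;\le\;\gamma^{(\ell-1)/4}\sqrt{2\delta}\,\|u\|_2,
\]
which for $\ell\ge 9$ is of order $\cut{W-p}^{5/2}$. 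Uniformity in $\ell$ is then essentially automatic: the tail bound $\gamma^{(\ell-1)/4}$ only \emph{decreases} as $\ell$ grows, while the main-term amplification $(\lambda_1/p)^{\ell-1}\ge 1$ only increases, so the critical case is $\ell=9$ and $\varepsilon_0$ ends up depending only on $p_0$ and the fixed $G$.

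The main obstacle will be matching the $O(\sqrt\delta)$-scale corrections in the principal-term analysis against the actual slackness, since the naive Cauchy--Schwarz bounds on $\sum_{i\ge 2}\cos\alpha_i c_i$ and $\langle u,r\rangle$ are only $O(\sqrt\delta)$ while the slackness in the main term is, at first sight, only $\Omega(\delta)$ (from the two complementary scenarios: $\lambda_1\ge p+\Omega(\delta)$ via \eqref{eq:lambda1big0} when degrees of $W$ are non-uniform, and the super-saturation strengthening of Theorem~\ref{thm:girtheven} for even-girth $H$ of the flavor of Lemma~\ref{lm:gammagamma} when degrees are essentially uniform). To close the gap, I would spectrally decompose $u$ itself and use that its components on subdominant eigenfunctions carry an additional factor of $|\lambda_i|$ coming from the root-incident edge of $H^\bullet$; the high girth of $G$, the specific choice of root in $H^\bullet$, and the Lov\'asz-type estimates from Subsection~\ref{subsec:estimate} are what make this refinement go through and deliver the uniform threshold $\varepsilon_0$.
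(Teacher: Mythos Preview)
Your spectral outline is a different route from the paper's, and as written it has two genuine gaps.

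\textbf{Reliance on Theorem~\ref{thm:girtheven}.} Your principal-term analysis hinges on $t(H,W)\ge p^{\|H\|}$, imported from Theorem~\ref{thm:girtheven}. But that theorem carries the hypothesis $\|W\|_\infty\le 2p$, and Theorem~\ref{thm:core} is stated for arbitrary $p_0\in(0,1)$; when $p<1/2$ (which does occur in the applications, where $p_0=1/2k$) this hypothesis can simply fail. The paper explicitly notes that one purpose of Theorem~\ref{thm:core} is to \emph{remove} this assumption, so you cannot invoke Theorem~\ref{thm:girtheven} as a black box here. Any ``super-saturation strengthening'' you appeal to in the near-regular case would suffer from the same restriction.

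\textbf{The $\sqrt\delta$ gap.} Your diagnosis of the mismatch is correct, but the proposed fix is not enough. Even granting an extra $|\lambda_i|$ from the root-incident edge (which is legitimate: if the root has a single neighbour then $c_i=\lambda_i\langle v,f_i\rangle$ and $\langle u,r\rangle=0$), you obtain $\cos\alpha_1 c_1\ge t(H,W)-O(\gamma^{1/4}\sqrt\delta)$, so you are still comparing a deficit of order $\lambda_1^{\ell-1}\gamma^{1/4}\sqrt\delta$ against a surplus of order $p^{\|H\|+\ell-1}(\ell-1)\delta$ from $\lambda_1\ge p(1+c\delta)$. For $\delta$ small but nonzero (say $\delta\asymp \gamma^{1/2}/\ell^{3}$) the deficit dominates, and there is nothing in your sketch that rules this regime out. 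You would need, at minimum, a lower bound on $t(H,W)$ that itself gains a $\gamma$- or $\delta$-dependent term, and that is precisely what is unavailable without the missing $\|W\|_\infty$ hypothesis.

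\textbf{What the paper actually does.} The paper does not treat the $H$-part as a black box at all. It builds $H^\bullet$ by attaching to $G$ a $3$-edge pendant path (carrying the root) and a $12$-edge pendant path capped with a copy of $C_4$, then expands $t(H_\ell,p+U)=p^{\|H_\ell\|}+\sum_{\emptyset\ne F}p^{\|H_\ell\|-|F|}t(H_\ell\langle F\rangle,U)$ and classifies the subsets $F$ by the shape of their ``core'' component. The terms where the core is a single star or exactly the attached $C_4$ give a positive contribution of size at least $t(P_\ell,W)\bigl(t(P_3,U)+t(C_4,U)\bigr)$; every other class (core containing a cycle of $G$, core containing $C_4$ plus extra edges, core reaching $v_8$ along the path, multiple cores, etc.) is bounded in absolute value by a small fraction of that same quantity using the Lov\'asz-type estimates of Subsection~\ref{subsec:estimate} and the girth-$50$ hypothesis. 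Uniformity in $\ell$ comes from controlling the ratios $t(P_m,W)/t(P_{m+1},W)$ via~\eqref{eq:Pmrange}. The attached $C_4$ is not decorative: it is the source of the $t(C_4,U)$ term that makes the positive part dominate, and your construction has no analogue of it. Your spectral picture is much closer to the proof of Theorem~\ref{thm:universal}, which \emph{uses} Theorem~\ref{thm:core} as input rather than re-deriving it.
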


\begin{proof}
Fix a graph $G$ and a real $p_0\in (0,1)$.
The graph $H^\bullet$ is obtained from the graph $G$
by attaching a $3$-edge-path and a $12$-edge-path to two different vertices of $G$,
attaching a cycle of length four to the end vertex of the $12$-edge-path not contained in $G$, and
choosing the end vertex of the $3$-edge-path not contained in $G$ as the root.
The construction of $H^\bullet$ is illustrated in Figure~\ref{fig:1}.
In what follows,
we will use $H$ for the non-rooted graph with the same vertices and edges as $H^\bullet$,
i.e., $H^\bullet$ can be obtained from $H$ be distinguishing the vertex $v_0$ as the root.

\begin{figure}
\begin{center}
\epsfbox{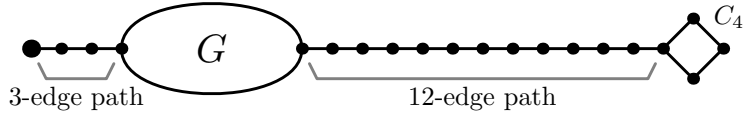}
\end{center}
\caption{The graph $H^\bullet$ from the statement of Theorem~\ref{thm:core}.}
\label{fig:1}
\end{figure}

We will show that the statement of the theorem holds for $\varepsilon_0$ chosen as
\[\varepsilon_0=\min\left\{\frac{p_0^4}{10^{12}},\frac{p_0^{64}}{2^{8\nume{H}+90}}\right\}.\]
In particular, $\varepsilon_0$ satisfies that
\begin{equation}
\frac{3\varepsilon_0^{1/4}}{p_0}\le\frac{1}{100}\left(1-\frac{4\varepsilon_0}{p_0^4}\right)^2.\label{eq:eps0}
\end{equation}
We remark that the estimate \eqref{eq:eps0} will be used later to show that
the middle inequality in \eqref{eq:Pmratio} holds for all $m\ge 2$ and
the first inequality in \eqref{eq:Plratio} holds for all $m=2,\ldots,\ell$
whenever $\lambda_1\in [p_0,1]$, $\delta\in [0,4\varepsilon_0/p_0^4]$ and $\gamma\in [0,4\varepsilon_0]$.

Consider a graph $H^\bullet\oplus P_{\ell}^\bullet$ for $\ell\ge 9$ and
a graphon $W$ with density $p\ge p_0$ such that $\cut{W-p}\leq\varepsilon_0$.
To simplify our notation, we will write $H_{\ell}$ for $H^\bullet\oplus P_{\ell}^\bullet$.
The vertices of the path $P_{\ell}^\bullet$ will be denoted by $v_0,\ldots,v_{\ell-1}$ in a way that $v_0$ is the root;
the two internal vertices of the $3$-edge path attached to $G$ when constructing $H^\bullet$
will be denoted by $v_{-2}$ and $v_{-1}$ in a way that $v_{-1}$ is the neighbor of $v_0$.
The notation is illustrated in Figure~\ref{fig:2}.

\begin{figure}
\begin{center}
\epsfbox{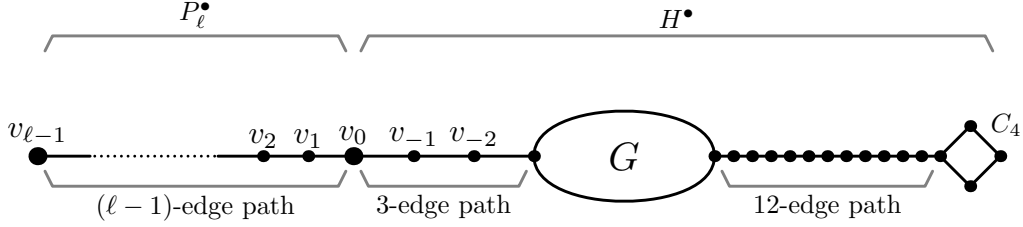}
\end{center}
\caption{The graph $H_\ell=H^\bullet\oplus P_{\ell}^\bullet$ from the statement of Theorem~\ref{thm:core} and
         notation concerning the vertices $v_{-2},v_{-1},\ldots,v_{\ell-1}$ used in its proof.}
\label{fig:2}
\end{figure}

Let $U=W-p$ and observe that $t(K_2,U)=0$ (as the density of $W$ is $p$) and $\|U\|_\infty\le 1$.
Since a path $P_{m}$ has the Sidorenko property for every $m\ge 1$,
we obtain that
\begin{equation}
t(P_{m},p+U)=t(P_{m},W)\ge p^{m-1}\label{eq:UPm}
\end{equation}
for every $m\ge 1$.
The density of $H_{\ell}$ can be expressed (see~\cite{Lov11} and~\cite[Proof of Proposition 16.27]{Lov12}) as
\begin{equation}
t(H_{\ell},W)=t(H_{\ell},p+U)=
p^{\nume{H}+\ell-1}+\sum_{\substack{F_0\subseteq E(H_{\ell})\\F_0\not=\emptyset}}p^{\nume{H}+\ell-1-\lvert F_0\rvert}t(\langle F_0\rangle,U).
\label{eq:goal0}
\end{equation}
Hence, 
in order to establish the statement of the theorem,
it is enough to show the following (note that the sum from~\eqref{eq:goal0} is divided by $p^{\nume{H}}$):
\begin{equation}
\sum_{\substack{F_0\subseteq E(H_{\ell})\\F_0\not=\emptyset}}p^{\ell-1-\lvert F_0\rvert}t(\langle F_0\rangle,U)\ge 0.
\label{eq:goal}
\end{equation}
We split the non-empty $2^{\nume{H}+\ell-1}-1$ subsets $F_0$ of $E(H_{\ell})$ into eight groups given below,
which we analyze separately.
We say that a component of $\langle F_0\rangle$ is a \emph{core} component if it contains an edge of $H^\bullet$.
\begin{enumerate}[label=(\alph*)]
\item The graph $\langle F_0\rangle$ has no core component. \label{it:A}
\item The graph $\langle F_0\rangle$ has a single core component and this component is $C_4$. \label{it:B}
\item The graph $\langle F_0\rangle$ has a single core component and this component is a star. \label{it:C}
\item The graph $\langle F_0\rangle$ has a core component containing the vertex $v_8$. \label{it:D}
\item The graph $\langle F_0\rangle$ has no core component containing $v_8$ and there is a core component containing a cycle of $G$. \label{it:E}
\item The graph $\langle F_0\rangle$ has no core component containing $v_8$ and the only cycle in $\langle F_0\rangle$ is $C_4$ but the core component containing $C_4$ is not just $C_4$. \label{it:F}
\item The graph $\langle F_0\rangle$ has no core component containing $v_8$, all core components are acyclic or $C_4$, and at least one of the acyclic core components is not a star. \label{it:G}
\item The graph $\langle F_0\rangle$ has no core component containing $v_8$, all core components are stars or $C_4$, and there are at least two core components. \label{it:H}
\end{enumerate}
We briefly verify that each subset $F_0$ belongs to exactly one of the groups \ref{it:A}--\ref{it:H}.
Since any core component containing the vertex $v_8$ can be neither $C_4$ nor a star,
the group \ref{it:D} is disjoint from the remaining seven groups.
If $F_0$ is not covered by the group \ref{it:A} or \ref{it:D},
then $\langle F_0\rangle$ has at least one core component and no core component of $\langle F_0\rangle$ contains $v_8$.
We restrict our attention to such subsets $F_0$ in the rest of this paragraph.
If $\langle F_0\rangle$ has a core component containing a cycle of $G$ (note that the length of this cycle is at least $50$),
then $F_0$ belongs to the group \ref{it:E} and it cannot belong to any other group.
If $\langle F_0\rangle$ has no core component containing a cycle of $G$
but it has a core component that contains the cycle $C_4$ and this core component is not solely this cycle,
then $F_0$ belongs to the group \ref{it:F} and it cannot belong to any other group.
Hence, we can further restrict our attention to only those subsets $F_0$ such that
no core component of $\langle F_0\rangle$ contains $v_8$ and
each core component of $\langle F_0\rangle$ is either acyclic or solely the cycle $C_4$.
If one of core components is not a star, then $F_0$ belongs to the group \ref{it:G} and it cannot belong to any other group.
Otherwise, all core components of $\langle F_0\rangle$ are stars or solely the cycle $C_4$.
If $F_0$ has a single core component, then it belongs to the group \ref{it:B} or the group \ref{it:C};
otherwise, i.e., when $F_0$ has at least two core components, it belongs the group \ref{it:H}.
We conclude that each subset $F_0$ belongs to exactly one of the groups \ref{it:A}--\ref{it:H}.

We next analyze the contribution of the terms for $F_0$
based on the groups \ref{it:A}--\ref{it:H}
in Claims~\ref{cl:ABC}--\ref{cl:DEFGH}.
Claim~\ref{cl:ABC} and Claim~\ref{cl:DEFGH} will readily yield that
the left hand side of \eqref{eq:goal} is at least
\begin{align*}
  & t(P_{\ell},p+U)(t(P_3,U)+t(C_4,U))-t(P_{\ell},p+U)\left(\frac{t(P_3,U)}{2}+\frac{3t(C_4,U)}{8}\right)\\
  \ge\; & t(P_{\ell},p+U)\left(\frac{t(P_3,U)}{2}+\frac{5t(C_4,U)}{8}\right)\ge 0,
\end{align*}
which establishes that \eqref{eq:goal} holds.
In particular, when Claim~\ref{cl:ABC} and Claim~\ref{cl:DEFGH} are established,
the proof of the theorem will be completed.
We remark that Claims \ref{cl:D}--\ref{cl:H} describe auxiliary results used to prove Claim~\ref{cl:DEFGH}.

\begin{claim}
\label{cl:ABC}
The sum of the terms in the left side of \eqref{eq:goal} that
involve $F_0$ from one of the groups \ref{it:A}--\ref{it:C}
is at least
\[t(P_{\ell},p+U)(t(P_3,U)+t(C_4,U)).\]
\end{claim}

\begin{proof}[Proof of Claim~\ref{cl:ABC}]
The sum of the terms that involve a set $F_0$ from the group~\ref{it:A} is equal to
\begin{equation}
\sum_{\substack{F\subseteq E(P_{\ell})\\F\not=\emptyset}}p^{\ell-1-\lvert F\rvert}t(\langle F\rangle,U)=
\sum_{F\subseteq E(P_{\ell})}p^{\ell-1-\lvert F\rvert}t(\langle F\rangle,U)-p^{\ell-1}=t(P_{\ell},p+U)-p^{\ell-1}\ge 0,
\label{eq:grA}
\end{equation}
where the last inequality follows from the fact that $P_{\ell}$ has the Sidorenko property and $p+U=W$.

Along the same lines,
we derive that the sum of the terms that involve a set $F_0$ from the group \ref{it:B} 
(note that such $F_0$ contain all the edges of $C_4$ and some edges of $P_{\ell}^\bullet$, and
$F_0$ can consist of the four edges of $C_4$ only)
is equal to
\begin{equation}
\sum_{F\subseteq E(P_{\ell})}p^{\ell-1-\lvert F\rvert-4}t(\langle F\rangle,U)t(C_4,U)
  =t(P_{\ell},p+U)\frac{t(C_4,U)}{p^4}
  \ge t(P_{\ell},p+U)t(C_4,U).
\label{eq:grB}
\end{equation}

It remains to analyze the sum of terms that involve a set $F_0$ from the group \ref{it:C}.
For any edge $e$ of $H^\bullet$,
the sum of the terms that involve a set $F_0$ from the group \ref{it:C} such that
the core component of $\langle F_0\rangle$ is formed by the edge $e$ only
is equal to zero as $t(K_2,U)=0$.
Hence, the sum of the terms that involve a set $F_0$ from the group \ref{it:C}
is equal to
\begin{itemize}
\item the sum of the terms that involve such $F_0$ with $\{v_{-2}v_{-1},v_{-1}v_0\}\subseteq F_0$,
\item the sum of the terms that involve such $F_0$ with $\{v_{-1}v_0,v_0v_1\}\subseteq F_0$, and
\item the sum of the terms that involve such $F_0$ that the core component of $F_0$ has at least two edges but does not contain $v_0$.
\end{itemize}
The sum of the terms that involve such $F_0$ with $\{v_{-2}v_{-1},v_{-1}v_0\}\subseteq F_0$ is equal to
(note that $t(K_{1,2},U)\ge 0$ by Lemma~\ref{lm:L1})
\begin{equation}
\sum_{F\subseteq E(P_{\ell-1})}p^{\ell-2-\lvert F\rvert-1}t(\langle F\rangle,U)t(K_{1,2},U)=
t(P_{\ell-1},p+U)\frac{t(K_{1,2},U)}{p}\ge 0.\label{eq:grC1}
\end{equation}
Similarly, the sum of the terms that involve such $F_0$ with $\{v_{-1}v_0,v_0v_1\}\subseteq F_0$ is equal to
\begin{equation}
\sum_{F\subseteq E(P_{\ell-2})}p^{\ell-3-\lvert F\rvert}t(\langle F\rangle,U)t(K_{1,2},U)=
t(P_{\ell-2},p+U)t(K_{1,2},U)\ge 0.\label{eq:grC2}
\end{equation}
Finally,
sum of the terms that involve such $F_0$ from the group~\ref{it:C} such that
the core component of $F_0$ has at least two edges but does not contain $v_0$
is equal to the following:
\begin{align}
& \sum_{F\subseteq E(P_{\ell})}\sum_{v\in V(H)\setminus\{v_0,v_{-1}\}}\sum_{d=2}^{\deg_H(v)}\binom{\deg_H(v)}{d}p^{\ell-1-\lvert F\rvert-d}t(\langle F\rangle,U)t(K_{1,d},U) \nonumber \\
= & \left(\sum_{F\subseteq E(P_{\ell})}p^{\ell-1-\lvert F\rvert}t(\langle F\rangle,U)\right)\sum_{v\in V(H)\setminus\{v_0,v_{-1}\}}\sum_{d=2}^{\deg_H(v)}\binom{\deg_H(v)}{d}p^{-d}t(K_{1,d},U) \nonumber \\
= & t(P_{\ell},p+U)\sum_{v\in V(H)\setminus\{v_0,v_{-1}\}}\sum_{d=2}^{\deg_H(v)}\binom{\deg_H(v)}{d}p^{-d}t(K_{1,d},U). \label{eq:grC3z}
\end{align}
Since it holds that $t(K_{1,1},U)=t(K_2,U)=0$, we obtain that \eqref{eq:grC3z} is equal to
\begin{align}
  & t(P_{\ell},p+U)\sum_{v\in V(H)\setminus\{v_0,v_{-1}\}}\sum_{d=1}^{\deg_H(v)}\binom{\deg_H(v)}{d}p^{-d}t(K_{1,d},U) \nonumber \\
= & t(P_{\ell},p+U)\sum_{v\in V(H)\setminus\{v_0,v_{-1}\}}\sum_{d=1}^{\deg_H(v)}\binom{\deg_H(v)}{d}\int\limits_{[0,1]}\left(\frac{t_x(K_2^\bullet,U)}{p}\right)^d\dd x \nonumber \\
= & t(P_{\ell},p+U)\sum_{v\in V(H)\setminus\{v_0,v_{-1}\}}\int\limits_{[0,1]}\left(1+\frac{t_x(K_2^\bullet,U)}{p}\right)^{\deg_H v}-1\dd x. \label{eq:grC3x}
\end{align}
Recall that Pascal's inequality asserts $(1+x)^n\ge 1+nx$ for all integers $n\ge 0$ and reals $x\ge -1$.
As $U\ge -p$, it holds that $t_x(K_2^\bullet,U)\ge -p\ge -1$ and so \eqref{eq:grC3x} is at least
\begin{align}
& t(P_{\ell},p+U)\sum_{v\in V(H)\setminus\{v_0,v_{-1}\}}\int\limits_{[0,1]}\left(1+\frac{(\deg_H v-1)t_x(K_2^\bullet,U)}{p}\right)\left(1+\frac{t_x(K_2^\bullet,U)}{p}\right)-1\dd x \nonumber \\
= & t(P_{\ell},p+U)\sum_{v\in V(H)\setminus\{v_0,v_{-1}\}}\int\limits_{[0,1]}\frac{\deg_H v\cdot t_x(K_2^\bullet,U)}{p}+\frac{(\deg_H v-1)t_x(K_2^\bullet,U)^2}{p^2}\dd x \nonumber \\
= & t(P_{\ell},p+U)\sum_{v\in V(H)\setminus\{v_0,v_{-1}\}}\deg_H v\;\frac{t(K_2,U)}{p}+(\deg_H v-1)\frac{t(K_{1,2},U)}{p^2} \nonumber \\
= & t(P_{\ell},p+U)\sum_{v\in V(H)\setminus\{v_0,v_{-1}\}}(\deg_H v-1)\frac{t(K_{1,2},U)}{p^2}
\ge t(P_{\ell},p+U)t(K_{1,2},U),
\label{eq:grC3}
\end{align}
where the last inequality follows
since $p\le 1$ and the degree of at least one vertex of $H$ different from $v_0$ and $v_{-1}$ is at least two.

Since the quantities estimated in \eqref{eq:grA}, \eqref{eq:grC1} and \eqref{eq:grC2} are non-negative,
the sum of the terms that involve a set $F_0$ from one of the groups~\ref{it:A}--\ref{it:C}
is at least the sum of the final estimates given in \eqref{eq:grB} and \eqref{eq:grC3},
i.e., at least $t(P_{\ell},p+U)(t(K_{1,2},U)+t(C_4,U))$.
This yields the bound stated in the claim.
\end{proof}

Before we proceed with proving Claims~\ref{cl:D}--\ref{cl:H},
we recall an estimate on the density of $P_{\ell}$ in $W=p+U$ from Subsection~\ref{subsec:spectral} and
provide more refined estimates that are needed in the proofs of some of the claims.
Let $\lambda_1$ be the largest eigenvalue of $W$ viewed as a Hilbert-Schmidt operator on $L_2[0,1]$,
let $\gamma=t(C_4,W)-p^4$ and
let $\delta$ be defined as in Subsection~\ref{subsec:spectral}.
Recall that $\gamma\le 4\varepsilon_0$ by \eqref{eq:cutU},
$p\le\lambda_1\le p+\varepsilon_0/p^3$ by \eqref{eq:lambda1small}, and
$\delta\le 4\varepsilon_0/p^4$ by \eqref{eq:delta}.
By \eqref{eq:Pmrange}, it holds that
\[
\lambda_1^{m}(1-\delta)^2-\gamma^{m/4}\le t(P_{m+1},p+U)\le\lambda_1^{m}(1-\delta)^2+\gamma^{m/4}
\]
for every integer $m\ge 0$.
Next, the choice of $\varepsilon_0$ implies that
$\gamma^{1/4}\le (4p_0^4/10^{12})^{1/4}\le p_0/700$ and
$\delta\le 4\varepsilon_0/p^4\le 10^{-11}$, and
so we obtain that
\begin{align}
\frac{t(P_m,p+U)}{t(P_{m+1},p+U)}
& \le\frac{\lambda_1^{m-1}(1-\delta)^2+\gamma^{(m-1)/4}}{\lambda_1^{m}(1-\delta)^2-\gamma^{m/4}} \nonumber \\
& \le\frac{\lambda_1^{m-1}(1-10^{-11})^2+(p_0/700)^{m-1}}{\lambda_1^{m}(1-10^{-11})^2-(p_0/700)^m} \nonumber \\
& \le\frac{\lambda_1^{m-1}(1-10^{-11})^2+(\lambda_1/700)^{m-1}}{\lambda_1^{m}(1-10^{-11})^2-(\lambda_1/700)^m} \nonumber \\
& \le\frac{(1-10^{-11})^2+1/700}{(1-10^{-11})^2-1/700^2}\cdot\frac{1}{\lambda_1}
  \le \frac{101}{100\lambda_1} \le \frac{101}{100p}
\label{eq:Pmratio}
\end{align}
for every integer $m\ge 2$;
since $t(P_1,p+U)=1$ and $t(P_2,p+U)=p$,
the estimate~\eqref{eq:Pmratio} also holds for $m=1$.
Hence,
the following holds for every $i=0,\ldots,8$:
\begin{equation}
t(P_{\ell-i},p+U)\le \left(\frac{101}{100p}\right)^i t(P_{\ell},p+U)
\le \frac{2}{p^8}t(P_{\ell},p+U)
\le \frac{2}{p_0^{8}}t(P_{\ell},p+U).
\label{eq:Pmsum}
\end{equation}
Similarly to \eqref{eq:Pmratio},
the choice of $\varepsilon_0$ implies that the following holds for every integer $m=2,\ldots,\ell$:
\begin{equation}
\frac{t(P_{m},p+U)}{t(P_{\ell},p+U)^{(m-1)/(\ell-1)}}
\le\frac{\lambda_1^{m-1}(1-\delta)^2+\gamma^{(m-1)/4}}{\left(\lambda_1^{\ell-1}(1-\delta)^2-\gamma^{(\ell-1)/4}\right)^{\frac{m-1}{\ell-1}}}
\le\frac{101}{100}.
\label{eq:Plratio}
\end{equation}
If $m=1$, the left side of \eqref{eq:Plratio} is $1$, and
so the estimate~\eqref{eq:Plratio} also holds for $m=1$.

\begin{claim}
\label{cl:D}
The absolute value of the sum of the terms in the left side of \eqref{eq:goal} that
involve $F_0$ from the group \ref{it:D}
is at most
\[\frac{1}{8}t(P_{\ell},p+U)t(C_4,U).\]
\end{claim}

\begin{proof}[Proof of Claim~\ref{cl:D}]
Consider a set $F_0$ from the group \ref{it:D}.
Let $F_c$ be the edges of $F_0$ contained in the core components of $\langle F_0\rangle$, and
let $k$ be the largest index such that $v_k$ is contained in a core component of $F_0$.
Note $k\ge 8$ by the definition of the group~\ref{it:D}.
Let $S^\bullet$ be the rooted graph obtained from $\langle F'\rangle$ by removing the path $v_0\cdots v_k$ and
choosing the vertex $v_0$ to be the root.
Observe that
\begin{align*}
\left\lvert t(\langle F_c\rangle,U)\right\rvert
& = \left\lvert\int_{[0,1]}t_x(P_{k+1}^\bullet,U)t_x(S^\bullet,U)\dd x\right\rvert \\
& \le \left(\int_{[0,1]}t_x(P_{k+1}^\bullet,U)^2\dd x\right)^{1/2}\left(\int_{[0,1]}t_x(S^\bullet,U)^2\dd x\right)^{1/2} \\
& \le \left(\int_{[0,1]}t_x(P_{k+1}^\bullet,U)^2\dd x\right)^{1/2}\dd x=t(P_{2k+1},U)^{1/2}\le t(C_4,U)^{k/4},
\end{align*}
where the last inequality follows from Lemma~\ref{lm:L1b}.
As there are at most $2^{\nume{H}}$ choices of a set $F_c$ such that a core component of $F_0$ contains $v_k$ but not $v_{k+1}$,
the absolute value of the sum of the terms that involve $F_0$ from the group \ref{it:D} is at most
\begin{equation}
2^{\nume{H}}\left(t(C_4,U)^{\ell/4}+\sum_{k=8}^{\ell-1}t(P_{\ell-k},p+U)t(C_4,U)^{k/4}\right).
\label{eq:grDsum}
\end{equation}
Using \eqref{eq:Plratio}, we estimate the sum in \eqref{eq:grDsum} as follows:
\begin{align*}
t(C_4,U)^{\ell/4}+\sum_{k=8}^{\ell-1}t(P_{\ell-k},p+U)t(C_4,U)^{k/4}
& \le t(C_4,U)^{\ell/4}+\frac{101}{100}\sum_{k=8}^{\ell-1}t(P_{\ell},p+U)^{\frac{\ell-k-1}{\ell-1}}t(C_4,U)^{k/4} \\
& \le \frac{101}{100}\sum_{k=8}^{\ell}t(P_{\ell},p+U)^{\frac{\ell-k-1}{\ell-1}}t(C_4,U)^{k/4}.
\end{align*}
We next estimate the last sum using that $t(P_{\ell},p+U)\ge p^{\ell-1}$:
\begin{align*}
\frac{101}{100} \sum_{k=8}^{\ell}t(P_{\ell},p+U)^{\frac{\ell-k-1}{\ell-1}}t(C_4,U)^{k/4}
& \le \frac{101}{100}t(P_{\ell},p+U)\sum_{k=8}^{\ell}\left(\frac{t(C_4,U)^{1/4}}{t(P_{\ell},p+U)^{\frac{1}{\ell-1}}}\right)^k \\
& \le \frac{101}{100}t(P_{\ell},p+U)\sum_{k=8}^{\ell}\left(\frac{t(C_4,U)^{1/4}}{p}\right)^k \\
& = \frac{101}{100}t(P_{\ell},p+U)\frac{t(C_4,U)^2}{p^8}\sum_{m=0}^{\ell-8}\left(\frac{t(C_4,U)^{1/4}}{p}\right)^m \\
& \le \frac{101}{100}t(P_{\ell},p+U)\frac{t(C_4,U)^2}{p^8}\sum_{m=0}^{\ell-8}\left(\frac{(4\varepsilon_0)^{1/4}}{p}\right)^m,
\end{align*}
which we further estimate using $\varepsilon_0\le p_0^4/10^{12}$ to be at most
\begin{align*}
\frac{101}{100}t(P_{\ell},p+U)\frac{t(C_4,U)^2}{p^8}\sum_{m=0}^{\ell-8}\left(\frac{p_0}{100p}\right)^m
& \le \frac{101}{100}t(P_{\ell},p+U)\frac{t(C_4,U)^2}{p^8}\sum_{m=0}^{\infty}\left(\frac{1}{100}\right)^m\\
& \le 2t(P_{\ell},p+U)\frac{t(C_4,U)^2}{p^8}.
\end{align*}
We now plug in this estimate to \eqref{eq:grDsum} and
derive using \eqref{eq:cutU} and the choice of $\varepsilon_0$ that
the absolute value of the sum of the terms that involve $F_0$ from the group \ref{it:D} is at most
\[
2^{\nume{H}+1}\cdot\frac{t(P_{\ell},p+U)t(C_4,U)^2}{p^8} 
\le 2^{\nume{H}+1}\cdot\frac{4\varepsilon_0 t(P_{\ell},p+U)t(C_4,U)}{p^8}
\le\frac{1}{8}t(P_{\ell},p+U)t(C_4,U).\]
The proof of the claim is now completed.
\end{proof}

\begin{claim}
\label{cl:E}
The absolute value of the sum of the terms in the left side of \eqref{eq:goal} that
involve $F_0$ from the group \ref{it:E}
is at most
\[\frac{1}{8}t(P_{\ell},p+U)\left(t(P_3,U)+t(C_4,U)\right).\]
\end{claim}

\begin{proof}[Proof of Claim~\ref{cl:E}]
Consider a set $F_0$ such that a core component of $\langle F_0\rangle$ contains a cycle of $G$.
Let $F_c$ be the edges of $F_0$ contained in the core components of $\langle F_0\rangle$,
let $C$ be the shortest cycle of $G$ contained in $\langle F_c\rangle$,
let $L\ge 50$ be the length of $C$, and
let $q$ be the number of vertices of degree at least three in $\langle F_c\rangle$ that are contained in $C$.
If $q\le 7$, a part of the cycle $C$ is an $8$-edge path whose all internal vertices have degree two in $\langle F_c\rangle$.
Let $S^{\bullet\bullet}$ be the rooted graph obtained from $\langle F_c\rangle$ by removing the edges of such an $8$-edge path and
choosing the end vertices of the removed path to be two roots of $S^{\bullet\bullet}$.
We obtain using the Cauchy-Schwarz inequality and Lemma~\ref{lm:L1a} that
\begin{align}
\left\lvert t(\langle F_c\rangle,U)\right\rvert
& = \left\lvert\int_{[0,1]^2}t_{x,x'}(P_9^{\bullet\bullet},U)t_{x,x'}(S^{\bullet\bullet},U)\dd x\dd x'\right \rvert \nonumber \\
& \le \left(\int_{[0,1]}t_{x,x'}(P_9^{\bullet\bullet},U)^2\dd x\dd x'\right)^{1/2}
      \left(\int_{[0,1]}t_{x,x'}(S^{\bullet\bullet},U)^2\dd x\dd x'\right)^{1/2} \nonumber \\
& \le t(C_{16},U)^{1/2}\le t(C_4,U)^2. \label{eq:grE1}
\end{align}

\begin{figure}
\begin{center}
\epsfbox{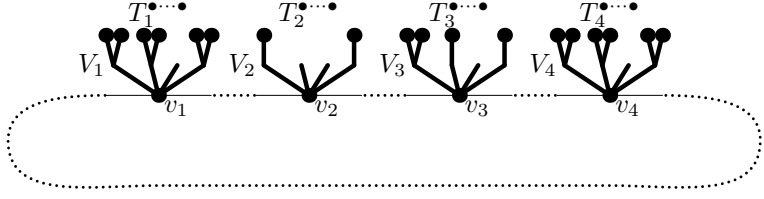}
\end{center}
\caption{Illustration of the definition of $v_i$, $V_i$ and $T_i^{\bullet\cdots\bullet}$
         in the proof of Claim~\ref{cl:E};
	 roots are the vertices depicted in the figure.}
\label{fig:grET}
\end{figure}

\begin{figure}
\begin{center}
\epsfbox{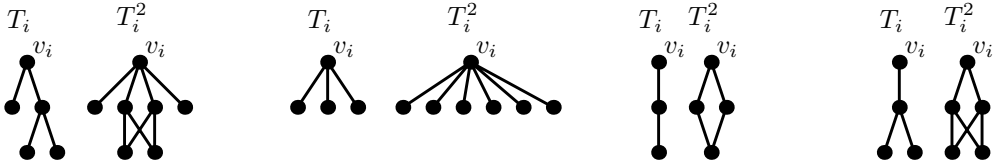}
\end{center}
\caption{Examples of rooted trees $T_i^{\bullet\cdots\bullet}$ and the associated trees $T_i^2$
         in the proof of Claim~\ref{cl:E}.}
\label{fig:grE}
\end{figure}

We next assume that $q\ge 8$.
Let $v_1,\ldots,v_4$ be any four vertices of $C$ with degree at least three in $\langle F_c\rangle$ such that
no two of them are consecutive on the cycle $C$;
the choice of $C$ as the shortest cycle contained in $G$ implies that the vertices $v_1,\ldots,v_4$ form an independent set.
Let $V_i$ for $i\in [4]$ be the set consisting of the vertex $v_i$ and
the vertices at distance at most two from $v_i$ in $\langle F_c\rangle$ that are not contained in the cycle $C$;
see Figure~\ref{fig:grET}.
Observe that the sets $V_1,\ldots,V_4$ are disjoint and there is no edge joining a vertex of $V_i$ and $V_j$ for $i\not=j$;
otherwise, $\langle F_c\rangle$ would contain a cycle shorter than $C$,
which would be formed by a path from $v_i$ through $V_i$ and then $V_j$ back to $v_j$ and
the shorter of the two parts of $C$ delimited by $v_i$ and $v_j$.
For $i=1,\ldots,4$,
let $T_i^{\bullet\cdots\bullet}$ be the rooted graph obtained from the subgraph of $\langle F_c\rangle$ induced by $V_i$
by choosing the vertex $v_i$ and all vertices at distance exactly two from $v_i$ to be the roots.
Further, let $S^{\bullet\cdots\bullet}$ be the graph obtained from $\langle F_c\rangle$
by deleting the non-root vertices of $T_1^{\bullet\cdots\bullet},\ldots,T_4^{\bullet\cdots\bullet}$ and
choosing the other vertices shared with $T_1^{\bullet\cdots\bullet},\ldots,T_4^{\bullet\cdots\bullet}$ as roots.
Let $m$ be the number of roots of $S^{\bullet\cdots\bullet}$.
Observe that it holds that (we omit the exact subscripts at the functions $t$ for clarity)
\[\left\lvert t(\langle F_c\rangle,U)\right\rvert=
  \left\lvert\int_{[0,1]^m}t_{\cdots}(S^{\bullet\cdots\bullet},U)\prod_{i=1}^4 t_{\cdots}(T_i^{\bullet\cdots\bullet},U)\dd x_1\cdots\dd x_m\right\rvert.\]
For $i=1,\ldots,4$, we write $T_i^2$ for the graph obtained by taking two copies of $T_i^{\bullet\cdots\bullet}$ and
identifying the pairs of corresponding roots; see Figure~\ref{fig:grE}.
By Lemma~\ref{lm:CS}, we obtain using that $\|U\|_{\infty}\le 1$ that
\begin{align}
\left\lvert t(\langle F_c\rangle,U)\right\rvert
& \le \left(\int_{[0,1]^m}t_{\cdots}(S^{\bullet\cdots\bullet},U)^2\dd x_1\cdots\dd x_m\right)^{1/2}
  \prod_{i=1}^4\left(\int_{[0,1]^m}t_{\cdots}(T_i^{\bullet\cdots\bullet},U)^2\dd x_1\cdots\dd x_m\right)^{1/2}\nonumber\\
& \le\prod_{i=1}^4\left(\int_{[0,1]^m}t_{\cdots}(T_i^{\bullet\cdots\bullet},U)^2\dd x_1\cdots\dd x_m\right)^{1/2}=
  \prod_{i=1}^4 t(T_i^2,U)^{1/2}.
\label{eq:grECS}
\end{align}
Consider $i\in [4]$.
If the vertex $v_i$ has degree one in $T_i^{\bullet\cdots\bullet}$,
then $T_i^2$ is the complete bipartite graph $K_{2,d}$ where $d$ is the number of roots of $T_i^{\bullet\cdots\bullet}$.
If $d=1$, then $t(T_i^2,U)=t(P_3,U)$.
If $d\ge 2$, we obtain using Lemma~\ref{lm:L2} that $t(T_i^2,U)\le t(C_4,U)$.
If the vertex $v_i$ has a neighbor of degree one in $T_i^{\bullet\cdots\bullet}$,
it holds that $t_{x_1,\ldots,x_r}(T_i^{\bullet\cdots\bullet},U)^2\le t_{x_1}(P_2^\bullet,U)^2$ (assuming that $x_1$
corresponds to the root vertex $v_i$) and
so $t(T_i^2,U)\le t(P_3,U)$.
Otherwise, $T_i^2$ satisfies the assumptions of Lemma~\ref{lm:L3a} and
we obtain that
\[t(T_i^2,U)\le t(C_4,U)^{5/4}\le t(C_4,U).\]
In all three cases, it holds that
\[t(T_i^2,U)\le\max\{t(P_3,U),t(C_4,U)\}=t(P_3,U)+t(C_4,U).\]
Hence, we obtain from \eqref{eq:grECS} that
\begin{equation}
\left\lvert t(\langle F_c\rangle,U)\right\rvert\le\left(t(P_3,U)+t(C_4,U)\right)^2.\label{eq:grE2}
\end{equation}
As there are at most $2^{\nume{H}+7}$ possible sets $F_c$ such that an associated set $F_0$ can be contained in the group~\ref{it:E},
we conclude using \eqref{eq:Pmsum}, \eqref{eq:grE1} and \eqref{eq:grE2} that
the absolute value of the sum of the terms that involve $F_0$ from the group \ref{it:E} is at most
\begin{equation}
\frac{2}{p_0^{8}}t(P_{\ell},p+U)2^{\nume{H}+7}\left(t(P_3,U)+t(C_4,U)\right)^2.
\label{eq:grEalmost}
\end{equation}
The choice of $\varepsilon_0$ implies using \eqref{eq:cutU} that
\begin{equation}
t(C_4,U)\le 4p_0^{64}/2^{8e(H)+90}
\label{eq:C4eps0}
\end{equation}
and so
\begin{equation}
t(P_3,U)\le t(C_4,U)^{1/2}\le 2p_0^{32}/2^{4e(H)+45},
\label{eq:P3eps0}
\end{equation}
which yields that \eqref{eq:grEalmost} does not exceed $\frac{1}{8}t(P_{\ell},p+U)\left(t(P_3,U)+t(C_4,U)\right)$.
\end{proof}

\begin{claim}
\label{cl:F}
The absolute value of the sum of the terms in the left side of \eqref{eq:goal} that
involve $F_0$ from the group \ref{it:F}
is at most
\[\frac{1}{8}t(P_{\ell},p+U)\left(t(P_3,U)+t(C_4,U)\right).\]
\end{claim}

\begin{proof}[Proof of Claim~\ref{cl:F}]
Consider a set $F_0$ from the group \ref{it:F} and
let $F_c$ be the edges of $F_0$ contained in the core components of $\langle F_0\rangle$.
Let $m$ be the number of edges of the $12$-edge path from $C_4$ to the graph $G$ in $H$ that
are contained in the component of $\langle F_0\rangle$ that contains $C_4$.
Note that $m\ge 1$ by the definition of the group~\ref{it:F}.
If $m\le 7$, we obtain using $\|U\|_{\infty}\le 1$ and Lemma~\ref{lm:L3b} that
\begin{equation}
t(\langle F_c\rangle,U)\le\frac{1}{2}\left(t(C_4,U)+t(P_3,U)\right)t(C_4,U)^{1/8}.
\label{eq:grF1}
\end{equation}
If $m\ge 8$, 
let $S^{\bullet\bullet}$ be the rooted graph obtained from $\langle F_c\rangle$ 
by removing the edges of an $8$-edge subpath of the $m$-edge path starting at $C_4$ and
choosing the end vertices of the path to be two roots of $S^{\bullet\bullet}$.
As in \eqref{eq:grE1}, we obtain that
\begin{equation}
\left\lvert t(\langle F_c\rangle,U)\right\rvert\le t(C_4,U)^2. \label{eq:grF2}
\end{equation}
The estimates \eqref{eq:grF1} and \eqref{eq:grF2} imply that
the following holds in either of the cases:
\[\left\lvert t(\langle F_c\rangle,U)\right\rvert\le\left(t(C_4,U)+t(P_3,U)\right)t(C_4,U)^{1/8}.\]
As there are at most $2^{\nume{H}+7}$ possible sets $F_c$ such that an associated $F_0$ can be contained in the group~\ref{it:F},
we conclude using \eqref{eq:Pmsum} and the choice of $\varepsilon_0$,
which implies that \eqref{eq:C4eps0} holds, that
the absolute value of the sum of the terms that involve $F_0$ from the group \ref{it:F} is at most
\[\frac{2}{p_0^{8}}t(P_{\ell},p+U)2^{\nume{H}+7}\left(t(P_3,U)+t(C_4,U)\right)t(C_4,U)^{1/8}
 \le \frac{1}{8}t(P_{\ell},p+U)\left(t(P_3,U)+t(C_4,U)\right).\]
This completes the proof of the claim.
\end{proof}

\begin{claim}
\label{cl:G}
The absolute value of the sum of the terms in the left side of \eqref{eq:goal} that
involve $F_0$ from the group \ref{it:G}
is at most
\[\frac{1}{8}t(P_{\ell},p+U)t(P_3,U).\]
\end{claim}

\begin{proof}[Proof of Claim~\ref{cl:G}]
Consider a set $F_0$ from the group \ref{it:G} and
let $F_c$ be the edges of $F_0$ contained in the core components of $\langle F_0\rangle$.
We obtain using Lemma~\ref{lm:L3c} that
\[\left\lvert t(\langle F_c\rangle,U)\right\rvert\le t(P_3,U)t(C_4,U)^{1/4}.\]
As there are at most $2^{\nume{H}+7}$ possible sets $F_c$ such that an associated set $F_0$ can be contained in the group~\ref{it:G},
we conclude using \eqref{eq:Pmsum} and the choice of $\varepsilon_0$,
which implies that \eqref{eq:C4eps0} holds, that
the absolute value of the sum of the terms that involve $F_0$ from the group \ref{it:G} is at most
\[
\frac{2}{p_0^{8}}t(P_{\ell},p+U)2^{\nume{H}+7}t(P_3,U)t(C_4,U)^{1/4}\le
\frac{1}{8}t(P_{\ell},p+U)t(P_3,U).\]
The proof of the claim is now completed.
\end{proof}

\begin{claim}
\label{cl:H}
The absolute value of the sum of the terms in the left side of \eqref{eq:goal} that
involve $F_0$ from the group \ref{it:H}
is at most
\[\frac{1}{8}t(P_{\ell},p+U)t(P_3,U).\]
\end{claim}

\begin{proof}[Proof of Claim~\ref{cl:H}]
Consider a set $F_0$ from the group \ref{it:H} and
let $F_c$ be the edges of $F_0$ contained in the core components of $\langle F_0\rangle$, and
let $S_1$ and $S_2$ be any two core components of $\langle F\rangle$.
If $S_i$ is a single edge, then $t(S_i,U)=0$, and
if $S_i$ is $C_4$, then $t(S_i,U)=t(C_4,U)$.
Finally,
if $S_i$ is a star with at least two leaves, we obtain using Lemma~\ref{lm:L2} that
\[\lvert t(S_i,U)\rvert\le t(K_1,U)=t(P_3,U).\]
Hence, we obtain that
\[\left\lvert t(\langle F_c\rangle,U)\right\rvert\le \lvert t(S_1,U)t(S_2,U)\rvert\le t(P_3,U)\left(t(P_3,U)+t(C_4,U)\right).\]
As there are at most $2^{\nume{H}+7}$ possible sets $F_c$ such that an associated set $F_0$ can be contained in the group~\ref{it:H},
we conclude using \eqref{eq:Pmsum} and the choice of $\varepsilon_0$,
which implies that \eqref{eq:P3eps0} holds, that
the absolute value of the sum of the terms that involve $F_0$ from the group \ref{it:H} is at most
\[
\frac{2}{p_0^{8}}t(P_{\ell},p+U)2^{\nume{H}+7}t(P_3,U)\left(t(P_3,U)+t(C_4,U)\right)\le
\frac{1}{8}t(P_{\ell},p+U)t(P_3,U),\]
which completes the proof of the claim.
\end{proof}

Summing the bounds obtained in Claims~\ref{cl:D}--\ref{cl:H} yields the following.

\begin{claim}
\label{cl:DEFGH}
The absolute value of the sum of the terms in the left side of \eqref{eq:goal} that
involve a set $F_0$ from one of the groups \ref{it:D}--\ref{it:H}
is at most
\[t(P_{\ell},p+U)\left(\frac{t(P_3,U)}{2}+\frac{3t(C_4,U)}{8}\right).\]
\end{claim}

Since we have proven both Claim~\ref{cl:ABC} and Claim~\ref{cl:DEFGH},
the proof of the theorem is now completed.
\end{proof}

\section{Local regime}
\label{sec:local}

In this section,
we prove that the graph $H^\bullet\oplus K_{m|\ell,n}$ is locally Sidorenko (assuming $W$ has density bounded away from zero),
where the bound on locality does not depend on the parameters $\ell$, $m$ and $n$,
under the assumption that all three parameters are sufficiently large.
Similarly to the proof of Theorem~\ref{thm:core},
the proof of Theorem~\ref{thm:universal} is split into several separate claims,
which are stated and proven inside the proof of the theorem
since they rely on the notation introduced at the beginning of the proof of the theorem.

\begin{theorem}
\label{thm:universal}
For every graph $G$ with girth at least $50$ and 
every real number $p_0 \in (0,1)$,
there exist a real $\gamma_0>0$,
a positive integer $n_0$ and
a connected rooted graph $H^\bullet$ that contains $G$ as an induced subgraph
such that the following holds
for graphon $W$ with density $p\ge p_0$ such that $t(C_4, W) - p^4 \leq \gamma_0$:
\begin{equation}
t(H^\bullet\oplus K_{m|\ell,n}^\bullet,W)\ge p^{\nume{H}+mn+\ell}
\label{eq:thmuniv}
\end{equation}
for all even integers $m,n,\ell\ge n_0$ such that $m$ is divisible by $5$ and $\ell\ge n + \nume{H}$.
\end{theorem}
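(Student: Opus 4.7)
The plan is to choose $H^\bullet$ as the rooted graph supplied by Theorem~\ref{thm:core} applied to $G$ and $p_0$, and to take $\gamma_0$ sufficiently small and $n_0$ sufficiently large in terms of $p_0$, the $\varepsilon_0$ from Theorem~\ref{thm:core}, and $\|H\|$, so that the quantity of interest can be analysed via the spectral decomposition of $W$ viewed as a self-adjoint operator on $L_2[0,1]$. Setting $\phi(x)=t_x(H^\bullet,W)$ and $\psi(x)=t_x(K_{m,n}^\bullet,W)$, the factorisation $K_{m|\ell,n}^\bullet=P_{\ell+1}^\bullet\oplus K_{m,n}^\bullet$ combined with~\eqref{eq:tHUmerge} yields
\[
t(H^\bullet\oplus K_{m|\ell,n}^\bullet,W)=\langle\phi,W^\ell\psi\rangle=\sum_i\lambda_i^\ell\langle\phi,f_i\rangle\langle\psi,f_i\rangle,
\]
where $W=\sum_i\lambda_if_i\otimes f_i$. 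The hypothesis $\gamma:=t(C_4,W)-p^4\le\gamma_0$ together with Lemma~\ref{lm:gammagamma} gives $\cut{W-p}\le(8\gamma_0)^{1/4}\le\varepsilon_0$, and the spectral estimates of Subsection~\ref{subsec:spectral} imply $\lambda_1\ge p$, $|\lambda_i|\le\gamma^{1/4}$ for $i\ge 2$, and $\delta:=1-\cos\alpha_1\le\gamma/p^4$.

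I would then isolate the principal term $\lambda_1^\ell\langle\phi,f_1\rangle\langle\psi,f_1\rangle$, with the Cauchy--Schwarz inequality bounding the remainder by $\gamma^{\ell/4}\|\phi\|_2\|\psi\|_2$. Writing $f_1=\cos\alpha_1\cdot j+g$ with $g\perp j$ and $\|g\|_2^2=1-\cos^2\alpha_1\le 2\delta$, the Sidorenko property of $K_{m,n}$ gives $\int\psi=t(K_{m,n},W)\ge p^{mn}$, hence $\langle\psi,f_1\rangle\ge\cos\alpha_1\cdot p^{mn}-\sqrt{2\delta}\,\|\psi-\bar\psi\|_2$. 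A companion bound $\langle\phi,f_1\rangle\ge c\cdot p^{\|H\|}$ for a constant $c=c(p_0,\|H\|)>0$ would be obtained by attaching a short path of length $k\ge 8$ to $H^\bullet$: the inequality $\langle\phi,W^k j\rangle=t(H^\bullet\oplus P_{k+1}^\bullet,W)\ge p^{\|H\|+k}$ from Theorem~\ref{thm:core}, combined with the spectral error bound $\gamma^{k/4}\le\gamma_0^{k/4}$, isolates the dominant eigenvalue component once $\gamma_0$ is chosen small relative to $p_0^{\|H\|+k}$. With $\lambda_1^\ell\ge p^\ell$, the principal term then matches the target $p^{\|H\|+mn+\ell}$ up to lower-order losses that must be absorbed by the case analysis below.

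The proof is completed by a case analysis on the degree function $\deg_W$, mirroring the three-case structure described in the proof overview. When $\deg_W$ is concentrated near $p$ (the no-sparse-part regime), one verifies the uniform bounds $\|\phi\|_2=O(p^{\|H\|})$ and $\|\psi\|_2=O(p^{mn})$; the remainder is then at most $(\gamma_0^{1/4}/p_0)^\ell\cdot p^{\|H\|+mn+\ell}$, which for $\gamma_0<p_0^4$ and $\ell\ge n_0$ large is dominated by the principal term. Otherwise, $W$ has a measurable subset $A\subseteq[0,1]$ of non-negligible measure on which $\deg_W$ is significantly below $p$; one restricts to the denser complement $B=[0,1]\setminus A$ using $t(H_0,W)\ge|B|^{|H_0|}t(H_0,W[B])$, where $H_0=H^\bullet\oplus K_{m|\ell,n}^\bullet$, and exploits that $W[B]$ has density exceeding $p$ by an amount large enough to absorb the $|B|^{|H_0|}$ factor, using that $\|H_0\|=\|H\|+mn+\ell$ is superlinear in $|H_0|=|H|+\ell+m+n-1$ once $m,n,\ell\ge n_0$ and the parity and divisibility constraints on $m,n,\ell$ are in force. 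The main obstacle is exactly the uniform control of $\|\psi\|_2^2=t(D,W)$, where $D$ is obtained by identifying two copies of $K_{m,n}^\bullet$ at their common root: this quantity can be dramatically larger than $p^{2mn}$ when $W$ concentrates density on a small subset, which is precisely the obstruction handled by the sparse-part case and the reason the locality bound $\gamma_0$ must remain independent of $\ell$, $m$ and $n$.
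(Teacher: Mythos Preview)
Your overall architecture---spectral decomposition, isolation of the $\lambda_1^\ell$ term via Theorem~\ref{thm:core}, and a concluding case split---matches the paper's, but two of the key steps you describe do not actually go through, and one crucial regime is missing.

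First, the claim that in the near-constant-degree regime one has $\|\psi\|_2=O(p^{mn})$ is false. Even a perfectly regular graphon with $\gamma=t(C_4,W)-p^4>0$ satisfies $\|\psi\|_1=t(K_{m,n},W)\ge(p^4+\gamma)^{mn/4}$, which exceeds $p^{mn}$ by a factor exponential in $mn$; this cannot be absorbed by a factor depending only on $p_0$. The paper never tries to bound $\|\psi\|_2$ by $p^{mn}$. Instead it bounds both $\kappa_1:=\langle\psi,f_1\rangle$ and $\|\psi\|_2$ by the \emph{same} quantity $t(K_{m-1,n},W)$, so that this large factor cancels when comparing the principal term to the remainder (see \eqref{eq:buffer}--\eqref{eq:error1}). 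Your acknowledgement at the end that $\|\psi\|_2^2$ can be ``dramatically larger than $p^{2mn}$'' only when $W$ concentrates on a small set is therefore too optimistic: it can be large even in the most quasirandom regime, and the fix is not a case distinction but a change of normalisation.

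Second, the sparse-part case as you describe it is circular. Passing to $W[B]$ with higher density only helps if you already know \eqref{eq:thmuniv} holds for $W[B]$; but $W[B]$ may itself have a sparse part, and iterating does not obviously terminate with useful bounds (the density gain per step can be arbitrarily small). The paper resolves this by an extremal argument: it lets $q$ be the supremum of densities for which a counterexample exists, fixes $W$ with density within a factor $1+p_0^{mn}/4$ of $q$, and then shows that in the problematic case $W$ contains a part $T$ with density strictly above $q$ and $\gamma_T\le\gamma$, so \eqref{eq:thmuniv} holds for $W[T]$ \emph{by the choice of $q$}, yielding a contradiction.

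Finally, the paper's case split is three-way, not two-way, and the splitting variables are not $\deg_W$ but rather $\delta$ versus $\gamma^2 n^2 p^{2n}$ and $\|\psi\|_2$ versus $\kappa_1/\gamma^{\ell/8}$. The middle regime (large $\delta$ but $\|\psi\|_2\le\kappa_1/\gamma^{\ell/8}$) requires an independent lower bound $\kappa_1\ge p^{mn}$, obtained by applying the entropy inequality (Lemma~\ref{lm:entropy}) to $K_{m|mn/10,n}$ and using $\lambda_1\ge p(1+7\delta/4)$; your proposal does not supply any mechanism that would handle this intermediate regime.
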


\begin{proof}
We first apply Theorem~\ref{thm:core} with $G$ and $p_0$
to get a connected rooted graph $H^\bullet$ that contains $G$ as an induced subgraph and a real $\varepsilon_H>0$.
We prove that the statement of the theorem holds with
\[\gamma_0=\min\left\{\frac{\varepsilon_H^4}{8},\frac{p_0^{160}}{2^{200}}\right\}\quad\mbox{and}\quad n_0=64.\]
Note that Theorem~\ref{thm:core} implies that
\begin{equation}
t(H^\bullet\oplus P_{\ell+1}^\bullet,W)\ge p^{\nume{H}+\ell}
\label{eq:lmcoreeq}
\end{equation}
for every graphon $W$ with density $p\ge p_0$ such that $t(C_4, W) - p^4\leq\gamma_0$, and
for every integer $\ell\ge 8$;
note that $\cut{W-p}\leq\varepsilon_H$ indeed holds for any such graphon by Lemma~\ref{lm:gammagamma}.

Fix even integers $m,n,\ell\ge n_0$ such that $m$ is divisible by $5$ and $\ell\ge n + \nume{H}\ge n_0\ge 64$.
Assume that \eqref{eq:thmuniv} fails for $H^\bullet$, $m$, $n$, $\ell$ and $\gamma_0$, and
let $q$ be the supremum over all $p\in [p_0,1]$ such that
there exists a graphon $W$ with density $p$ and $t(C_4, W) - p^4 \leq \gamma_0$ such that
\begin{equation}
t(H^\bullet\oplus K_{m|\ell,n}^\bullet,W)<p^{\nume{H}+mn+\ell}.
\label{eq:violate}
\end{equation}
Fix such a graphon $W$ with density $p>q/(1+p_0^{mn}/4)$ for the rest of the proof.
Observe that the choices of $q$ and $p$ implies that
every graphon $W'$ with density $p'\ge p(1+p_0^{mn}/4)$ and $t(C_4, W) - (p')^4\leq\gamma_0$
satisfies \eqref{eq:thmuniv}.

We next analyze spectral properties of the operator associated with the graphon $W$
using results presented in Subsection~\ref{subsec:spectral}.
Let $\lambda_i$ be the non-zero eigenvalues of $W$ listed in the non-increasing order of the absolute value (with multiplicities), and
let $f_i$ be the corresponding orthonormal eigenfunctions.
Further,
let $\alpha_i\in [0,\pi/2]$ be the real such that $\langle \jj,f_i\rangle=\cos\alpha_i$, and
set $\delta=1-\cos\alpha_1$.
Finally, let $\gamma=t(C_4,W)-p^4$. Note that $\gamma\le\gamma_0$.

We start with giving a lower bound on $\lambda_1$ in terms of $p$ and $\delta$.
We obtain using \eqref{eq:lambda1big0} and $\gamma\le p_0^4/2^{20}$ that
\begin{equation}
\lambda_1
\ge p(1+2\delta) - 8\delta \gamma^{1/4}
\ge p(1+2\delta) - \delta p_0/4
\ge p(1+7\delta/4).
\label{eq:lambda1big}
\end{equation}
On the other hand,
it holds that $\lambda_1\le p+\gamma/(4p^3)$ by \eqref{eq:lambda1small}.
Hence, we obtain using \eqref{eq:lambda1big} and $\gamma\le p_0^5/2^{20}$ that
\begin{equation}
\delta \le \frac{4}{7}\cdot \frac{\lambda_1-p}{p}\le\frac{\gamma}{7p^3} \le \frac{p_0^2}{2^{20}} \le  \frac{1}{2^{20}}.
\label{eq:dsmallg}
\end{equation}

Let $g_H:[0,1]\to [0,1]$ be defined as $g_H(x)=t_x(H^\bullet,W)$ and
let $g_K:[0,1]\to [0,1]$ be defined as $g_K(x)=t_x(K_{m,n}^\bullet,W)$.
Note that it holds that $g_K(x)=t_x(K_{m,n}^\bullet,W)\le t(K_{m-1,n},W)$ for every $x\in [0,1]$,
which implies that
\begin{equation}
\|g_K\|_2 \leq \| g_K\|_\infty \leq t(K_{m-1,n},W).
\label{eq:gKbound}
\end{equation}
Set $\sigma_i=\langle g_H, f_i\rangle$ and $\kappa_i=\langle g_K, f_i\rangle$.
Since $\|g_H\|_2\le\|g_H\|_\infty\le 1$ and $\|g_K\|_2\le\|g_K\|_\infty\le 1$,
we obtain that $\lvert\sigma_i\rvert\le 1$ and $\lvert\kappa_i\rvert\le 1$ for every $i$.
Next observe that
\begin{equation}
t(H^\bullet\oplus K_{m|\ell,n}^\bullet,W)=\langle g_H, W^{\ell} g_K\rangle=
\sigma_1\kappa_1\lambda_1^{\ell}+\sum_{i\ge 2}\sigma_i\kappa_i\lambda_i^{\ell}.
\label{eq:goalexpression}
\end{equation}

In the next two claims, we give lower bounds on the product $\sigma_1\lambda_1^{\ell}$ and $\kappa_1$.

\begin{claim}
\label{cl:sigma1lower}
For every $\ell'\ge 8$, it holds that
\[\sigma_1 \lambda_1^{\ell'}\ge p^{\nume{H}+\ell'}-2\gamma^{\ell'/4}\delta^{1/2}.\]
\end{claim}

\begin{proof}[Proof of Claim~\ref{cl:sigma1lower}]
Consider a positive integer $\ell'\ge 8$.
The estimate \eqref{eq:lmcoreeq} implies that
\begin{equation}
p^{\nume{H}+\ell'} \le
t(H^\bullet\oplus P_{\ell'+1}^\bullet,W) =
\sigma_1 \lambda_1^{\ell'}(1-\delta)+\sum_{i\ge 2}\sigma_i \lambda_i^{\ell'}\cos\alpha_i.
\label{eq:SidorenkoHl}
\end{equation}
We combine \eqref{eq:SidorenkoHl} with \eqref{eq:powersumgamma} and \eqref{eq:sumcosa} to obtain that
\begin{align*}
p^{\nume{H}+\ell'}
 \le \sigma_1 \lambda_1^{\ell'}(1-\delta)+\sum_{i\ge 2}\sigma_i \lambda_i^{\ell'}\cos\alpha_i 
 & \le \sigma_1 \lambda_1^{\ell'}+\sum_{i\ge 2}\lvert\lambda_i\rvert^{\ell'}\cdot\lvert\cos\alpha_i\rvert \\
 & \le \sigma_1 \lambda_1^{\ell'}+\left(\sum_{i\geq 2} \lambda_i^{2\ell'}\right)^{1/2}\left(\sum_{i\ge 2}\cos^2\alpha_i\right)^{1/2}\\
 & \le \sigma_1 \lambda_1^{\ell'}+2\gamma^{\ell'/4}\delta^{1/2}.
\end{align*}
Hence, we can conclude that
\begin{equation}
\sigma_1 \lambda_1^{\ell'}\ge p^{\nume{H}+\ell'}-2\gamma^{\ell'/4}\delta^{1/2}
\label{eq:sigma1lowerplus}
\end{equation}
holds for every $\ell'\ge 8$.
The estimate given in the statement of the claim now follows.
\end{proof}

\begin{claim}
\label{cl:k1bound1}
It holds that
\[\kappa_1 \geq t(K_{m-1,n},W)((p^4+\gamma)^{n/4}-2\delta^{1/2}).\]
\end{claim}

\begin{proof}[Proof of Claim~\ref{cl:k1bound1}]
We first obtain using the Cauchy-Schwarz inequality that
\[t(K_{m,n}, W)=\langle g_K,\jj\rangle=\sum_{i\ge 1}\kappa_i\cos\alpha_i\le\kappa_1\cos\alpha_1+\left(1-\cos^2\alpha_1\right)^{1/2}\|g_K\|_2,\]
which implies using the definition of $\delta$ and \eqref{eq:gKbound} that
\[
t(K_{m,n}, W)\le\kappa_1\cos\alpha_1+2\delta^{1/2}t(K_{m-1,n},W).
\]
Hence, it follows using Lemma~\ref{lm:K2n2nC4b} that
\[
\kappa_1 \ge t(K_{m,n}, W)-2\delta^{1/2}t(K_{m-1,n},W)
         \ge t(K_{m-1,n},W)(t(K_{{m-1},n},W)^{1/(m-1)}-2\delta^{1/2}).
\]
Since Lemma~\ref{lm:K2n2nC4a} implies that
\[t(K_{{m-1},n},W)^{1/(m-1)}\ge t(C_4,W)^{n/4}=(p^4+\gamma)^{n/4},\]
the statement of the claim now follows.
\end{proof}

We next distinguish the following three cases,
which will be covered in Claims~\ref{cl:univ1}, \ref{cl:univ2} and \ref{cl:univ3}, respectively.
In each of the cases, we show that the fixed graphon $W$ satisfies \eqref{eq:thmuniv},
which would contradict the choice of $W$ and so complete the proof of the theorem.
\begin{enumerate}[label=(\roman*)]
\item It holds that $\delta\le\frac{\gamma^2 n^2 p^{2n}}{1600p^8}$. \label{it:1}
\item It holds that $\delta>\frac{\gamma^2 n^2 p^{2n}}{1600p^8}$ and $\|g_K\|_2\le\frac{\kappa_1}{\gamma^{\ell/8}}$. \label{it:2}
\item It holds that $\delta>\frac{\gamma^2 n^2 p^{2n}}{1600p^8}$ and $\|g_K\|_2>\frac{\kappa_1}{\gamma^{\ell/8}}$. \label{it:3}
\end{enumerate}
Intuitively speaking,
Case~\ref{it:1} corresponds to the situation when the degrees of almost all vertices of $W$ are the same, 
which is equivalent to that $\delta$ is small.
Case~\ref{it:2} corresponds to the situation when the sum in \eqref{eq:goalexpression} is small compared to the leading term,
i.e., the term $\sigma_1\kappa_1\lambda_1^{\ell}$ controls the expression in \eqref{eq:goalexpression}.
In Case~\ref{it:3}, which covers the cases not covered in Cases~\ref{it:1} and~\ref{it:2},
we show that the reason for the sum in \eqref{eq:goalexpression} being large comparatively to the leading term
is that the graphon $W$ contains a part denser than $q$
where we identify sufficinetly many copies of $H^\bullet\oplus K_{m|\ell,n}^\bullet$;
here, we use the choice of $q$ as
the supremum among the densities of graphons violating \eqref{eq:thmuniv} for fixed $m$, $n$ and $\ell$.

\begin{claim}
\label{cl:univ1}
If $\delta$ is at most $\frac{\gamma^2 n^2 p^{2n}}{1600p^8}$,
then \eqref{eq:thmuniv} holds for $m$, $n$, $\ell$ and the graphon $W$.
\end{claim}

\begin{proof}[Proof of Claim~\ref{cl:univ1}]
We start with refining the bound given in Claim~\ref{cl:k1bound1}
using the assumption that $\delta\le\frac{\gamma^2 n^2 p^{2n}}{1600p^8}$,
which is equivalent to $\frac{40\delta^{1/2}p^4}{np^n}\le\gamma$, and
Pascal's inequality:
\begin{align}
\kappa_1 & \ge t(K_{m-1,n},W)\left((p^4+\gamma)^{n/4}-2\delta^{1/2}\right) \nonumber \\
         & \ge t(K_{m-1,n},W)\left(p^n\left(1+\frac{n\gamma}{4p^4}\right)-2\delta^{1/2}\right) \nonumber \\
         & \ge t(K_{m-1,n},W)\left(p^n\left(1+\frac{10\delta^{1/2}}{p^n}\right)-2\delta^{1/2}\right) \nonumber \\
	 & = t(K_{m-1,n},W)\left(p^n+8\delta^{1/2}\right)\ge t(K_{m-1,n},W)p^n(1+8\delta^{1/2}). \label{eq:buffer}
\end{align}
Hence, we conclude using Claim~\ref{cl:sigma1lower}, the bound $\gamma\le p^8/2^{20}$, \eqref{eq:buffer} and $\ell\ge\nume{H}$ that
the first term in \eqref{eq:goalexpression} can be bounded from below as follows:
\begin{align}
\sigma_1\kappa_1\lambda_1^{\ell}
  & \ge \left(p^{\nume{H}+\ell}-2\gamma^{\ell/4}\delta^{1/2}\right)t(K_{m-1,n},W)p^n(1+8\delta^{1/2})\nonumber\\
  & \ge \left(p^{\nume{H}+\ell}-p^{2\ell}\delta^{1/2}/16\right)t(K_{m-1,n},W)p^n(1+8\delta^{1/2})\nonumber\\
  & = t(K_{m-1,n},W)p^n\left(p^{\nume{H}+\ell}+8p^{\nume{H}+\ell}\delta^{1/2}-p^{2\ell}\delta^{1/2}/16-p^{2\ell}\delta/2\right)\nonumber\\
  & \ge t(K_{m-1,n},W)p^n\left(p^{\nume{H}+\ell}+8p^{\nume{H}+\ell}\delta^{1/2}-p^{2\ell}\delta^{1/2}\right)\nonumber\\
  & \ge t(K_{m-1,n},W)p^n\left(p^{\nume{H}+\ell}+8p^{2\ell}\delta^{1/2}-p^{2\ell}\delta^{1/2}\right)
    \ge t(K_{m-1,n},W)p^{\nume{H}+\ell+n}.\label{eq:dom1}
\end{align}
On the other hand,
the absolute value of the sum in \eqref{eq:goalexpression}
can be bounded from above using Cauchy-Schwarz inequality, \eqref{eq:powersumgamma} and \eqref{eq:gKbound} as follows:
\begin{align}
\left\lvert \sum_{i\ge 2}\sigma_i\kappa_i\lambda_i^{\ell}\right\rvert
& \le \left(\sum_{i\ge 2} \kappa_i^2 \sigma_i^2 \right)^{1/2} \left(\sum_{i\ge 2} \lambda_i^{2\ell}\right)^{1/2} \nonumber\\
& \le \left(\sum_{i\ge 1} \kappa_i^2 \sigma_i^2 \right)^{1/2} \left(\sum_{i\ge 1} \lambda_i^{2\ell}\right)^{1/2} \nonumber\\
& = \|g_K\|_2 \cdot \gamma^{\ell/4}
\le t(K_{m-1, n}, W) \gamma^{\ell/4}. \label{eq:error1} 
\end{align}
Hence, we conclude using \eqref{eq:goalexpression}, \eqref{eq:dom1}, \eqref{eq:error1},
Lemma~\ref{lm:K2n2nC4a}, Pascal's inequality, $\gamma\le p^{16}/2^{32}$ and $\ell\ge n + \nume{H} \ge 8$ that
\begin{align*}
t(H^\bullet\oplus K_{m|\ell,n}^\bullet,W)
& \ge t(K_{m-1,n},W)\left(p^{\nume{H}+\ell+n}-\gamma^{\ell/4}\right)\\
& \ge (p^4+\gamma)^{(m-1)n/4}\left(p^{\nume{H}+\ell+n}-\gamma^{1+\ell/8}\right)\\
& \ge  p^{(m-1)n}\left(1+\frac{\gamma}{p^4}\right)^{(m-1)n/4}\left(p^{\nume{H}+\ell+n}-\gamma^{1+\frac{\ell+n + \nume{H}}{16}}\right)\\
& \ge p^{(m-1)n}\left(1+\frac{(m-1)n\gamma}{4p^4}\right)\left(p^{\nume{H}+\ell+n}-\frac{\gamma p^{\nume{H}+\ell+n}}{4}\right)\\
& \ge p^{\nume{H}+\ell+mn}\left(1+\frac{\gamma}{2}\right)\left(1-\frac{\gamma}{4}\right)\ge p^{\nume{H}+\ell+mn},
\end{align*}
which completes the proof of the claim.
\end{proof}

Before stating and proving Claims~\ref{cl:univ2} and~\ref{cl:univ3},
we show that if $\delta$ is large, then $\kappa_1$ is also large as
stated in the next claim.

\begin{claim}
\label{cl:univ23}
If $\delta$ is larger than $\frac{\gamma^2 n^2 p^{2n}}{1600p^8}$, then $\kappa_1\ge p^{mn}$.
\end{claim}

\begin{proof}[Proof of Claim~\ref{cl:univ23}]
We start with a bound on the density of $K_{1,2}$:
\[t(K_{1,2},W) = \langle \jj, W^2\jj \rangle
  = \sum_{i\ge 1} \lambda_i^2\cos^2 \alpha_i
  \geq \lambda_1^2 \cos^2 \alpha_1
  = \lambda_1^2 (1-\delta)^2.\]
Since $mn/10$ is an even integer,
which follows from the facts that $m$ and $n$ are even integers and $m$ is divisible by $5$,
Lemma~\ref{lm:entropy} implies that
\[t(K_{m|mn/10,n},W)\ge\lambda_1^{11mn/10}(1-\delta)^{11mn/10}.\]
On the other hand, we obtain using \eqref{eq:powersumgamma} that
\begin{align*}
t(K_{m|mn/10,n},W) = \langle \jj, W^{mn/10}g_K\rangle
 & = \kappa_1(1-\delta)\lambda_1^{mn/10}+\sum_{i\ge 2}\kappa_i\cos\alpha_i\lambda_i^{mn/10} \\
 & \le \kappa_1\lambda_1^{mn/10}+\sum_{i\ge 2}\lvert\lambda_i\rvert^{mn/10} \\
 & \le \kappa_1\lambda_1^{mn/10}+\gamma^{mn/40}.
\end{align*}
We obtain by combining the just obtained lower and upper bounds on $t(K_{m|mn/10,n},W)$ that
\begin{equation}
 \kappa_1 \ge \lambda_1^{mn}(1-\delta)^{11mn/10}-\gamma^{mn/40}/\lambda_1^{mn/10}.
 \label{eq:kappa1a}
\end{equation}
We next plug the estimate \eqref{eq:lambda1big} to \eqref{eq:kappa1a} and 
we derive using $\gamma\le p^{160}$, Pascal's inequality, and $n_0\ge 20$ that
\begin{align*}
\kappa_1
 & \ge p^{mn}(1+7\delta/4)^{mn}(1-\delta)^{11mn/10}-\gamma^{mn/40}/p^{mn/10}\\
 & \ge p^{mn}\left((1+7\delta/4)^{10}(1-\delta)^{11}\right)^{mn/10}-\gamma^{mn/160}p^{2mn}\\
 & \ge p^{mn}\left((1+70\delta/4)(1-11\delta)\right)^{mn/10}-\gamma^2 p^{2n+mn}\\
 & \ge p^{mn}\left(1+70\delta/4-11\delta-770\delta^2/4\right)^{mn/10}-\gamma^2p^{2n+mn}
\end{align*}
which yields using $\delta\le 2^{-20}$ given by \eqref{eq:dsmallg} that
\[
 \kappa_1 \ge p^{mn}\left(1+4\delta\right)^{mn/10}-\gamma^2p^{2n+mn}
          \ge p^{mn}\left(1+\frac{2\delta mn}{5}-\gamma^2p^{2n}\right).
\]
We now use the assumption from the statement of the claim that $\delta>\gamma^2 n^2 p^{2n}/1600$, and
the fact that $n_0\ge 20$ to derive that
\begin{equation}
 \kappa_1 \ge p^{mn}\left(1+\frac{\gamma^2 mn^3 p^{2n}}{4000}-\gamma^2p^{2n}\right) \ge p^{mn},
 \label{eq:k1stronger}
\end{equation} 
which completes the proof of the claim.
\end{proof}

We are now ready to analyze Cases~\ref{it:2} and~\ref{it:3},
which are covered in the next two claims.

\begin{claim}
\label{cl:univ2}
If $\delta$ is larger than $\frac{\gamma^2 n^2 p^{2n}}{1600p^8}$ and
$\|g_K\|_2$ is at most $\frac{\kappa_1}{\gamma^{\ell/8}}$,
then \eqref{eq:thmuniv} holds for $m$, $n$, $\ell$ and the graphon $W$.
\end{claim}

\begin{proof}[Proof of Claim~\ref{cl:univ2}]
We first plug the assumption $\|g_K\|_2\le\frac{\kappa_1}{\gamma^{\ell/8}}$ to \eqref{eq:goalexpression} and
obtain using \eqref{eq:powersumgamma} and Cauchy-Schwarz inequality that
\begin{align}
t(H^\bullet\oplus K_{m|\ell,n}^\bullet,W)
 & = \sigma_1\kappa_1\lambda_1^{\ell}+\sum_{i\ge 2}\sigma_i\kappa_i\lambda_i^{\ell} \nonumber \\
 & \ge \sigma_1\kappa_1\lambda_1^{\ell}-\sum_{i\ge 2}\lvert\kappa_i\rvert\cdot\lvert\lambda_i\rvert^{\ell} \nonumber \\
 & \ge \sigma_1\kappa_1\lambda_1^{\ell}-\|g_K\|_2\gamma^{\ell/4} \ge \kappa_1\left(\sigma_1\lambda_1^{\ell}-\gamma^{\ell/8}\right).
 \label{eq:case2a}
\end{align}
We next estimate $\sigma_1\lambda_1^{\ell}$
using Claim~\ref{cl:sigma1lower} with $\ell'=\ell-1$, \eqref{eq:lambda1big} and $\ell\ge 10$,
as follows:
\begin{align}
\sigma_1\lambda_1^{\ell}=\sigma_1\lambda_1^{\ell-1}\lambda_1
 & \ge \left(p^{\nume{H}+\ell-1}-2\gamma^{(\ell-1)/4}\delta^{1/2}\right)p(1+7\delta/4) \nonumber \\
 & \ge p^{\nume{H}+\ell}\left(1-\frac{2\gamma^{(\ell-1)/4}\delta^{1/2}}{p^{\nume{H}+\ell-1}}\right)(1+7\delta/4) \nonumber \\
 & \ge p^{\nume{H}+\ell}\left(1-2\gamma^{\ell/8}\delta^{1/2}\right)(1+7\delta/4).
 \label{eq:case2b}
\end{align}
We obtain using Claim~\ref{cl:univ23}, \eqref{eq:case2a}, \eqref{eq:case2b}, $\gamma\le p^{64}$ and $\ell\ge n+\nume{H}$ that
\begin{align}
t(H^\bullet\oplus K_{m|\ell,n}^\bullet,W)
 & \ge p^{mn}\left(\sigma_1\lambda_1^{\ell}-\gamma^{\ell/8}\right)\nonumber\\
 & \ge p^{mn}\left(p^{\nume{H}+\ell}\left(1-2\gamma^{\ell/8}\delta^{1/2}\right)(1+7\delta/4)-\gamma^{\ell/8}\right)\nonumber\\
 & \ge p^{\nume{H}+\ell+mn}\left(\left(1-\gamma^{\ell/16}\delta^{1/2}/2^{10}\right)(1+7\delta/4)-\gamma^{\ell/16}\right).
 \label{eq:case2c}
\end{align}
We next estimate $\gamma^{\ell/16}$ using that 
the assumption that $\frac{\gamma^2 n^2 p^{2n}}{1600p^8}<\delta$,
$n\ge 64$ and $\ell\ge n$ (and $\ell\ge 64$) as follows:
\[\gamma^{\ell/16}
 \le \gamma^2 \gamma^{\ell/32}
 \le \frac{1600\delta}{n^2 p^{2n}} p^{160\ell/32}
 \le \frac{\delta}{p^{2n}} p^{160n/32}
 \le \delta.\]
Hence, we obtain from \eqref{eq:case2c} that
\begin{align*}
t(H^\bullet\oplus K_{m|\ell,n}^\bullet,W)
& \ge p^{\nume{H}+\ell+mn}\left(\left(1-\delta^{3/2}/2^{10}\right)(1+7\delta/4)-\delta\right) \\
& \ge p^{\nume{H}+\ell+mn}\left(1+7\delta/4-\delta^{3/2}/2^{10}-7\delta^{5/2}/2^{12}-\delta\right) \ge p^{\nume{H}+\ell+mn},
\end{align*}
which yields the statement of the claim.
\end{proof}

\begin{claim}
\label{cl:univ3}
If $\delta$ is larger than $\frac{\gamma^2 n^2 p^{2n}}{1600p^8}$ and
$\|g_K\|_2$ is larger than $\frac{\kappa_1}{\gamma^{\ell/8}}$,
then \eqref{eq:thmuniv} holds for $m$, $n$, $\ell$ and the graphon $W$.
\end{claim}

\begin{proof}[Proof of Claim~\ref{cl:univ3}]
Our aim is to show that $W$ contains a significantly denser part and
use this part to obtain a contradiction with the choice of $q$ at the beginning of the proof.
Observe that 
we can assume the eigenfunction $f_1$ is non-negative since the graphon $W$ is non-negative (if needed,
change $f_1$ on a set of measure zero).
We define subsets $S$ and $S'$ of $[0,1]$ (also see Figure~\ref{fig:SS}) as follows:
\begin{align*}
S & = \{x\in [0,1]\mbox{ such that }f_1(x)\le\gamma^{\ell/100}\}\mbox{, and}\\
S' & = \{x\in [0,1]\mbox{ such that }f_1(x)\le 1-\delta^{1/4}\}.
\end{align*}
Note that $\gamma^{\ell/100}\le\gamma_0^{n_0/100}\le 2^{-128}$ by the choice of $n_0$ and $\gamma_0$ and
$\delta^{1/4}\le 1/32$ by \eqref{eq:dsmallg}, and
so it holds that $S\subseteq S'$.

We start with bounding the measures of the sets $S$ and $S'$ from above.
Since $\|f_1\|_1=\langle f_1,\jj\rangle=1-\delta$ and $0\le f_1(x)\le 1$ for all $x\in [0,1]$,
we obtain that $\mu(S)\gamma^{\ell/100}+1-\mu(S)\ge 1-\delta$,
which implies using \eqref{eq:dsmallg} that
\begin{equation}
\mu(S) \leq \frac{\delta}{1-\gamma^{\ell/100}}\le \frac{p_0}{2^{20}(1-\gamma^{\ell/100})}\le \frac{p}{2^{16}}.
\label{eq:Ssmall}
\end{equation}
Similarly, it holds that $\mu(S')(1-\delta^{1/4})+1-\mu(S')\ge 1-\delta$,
which implies that
\begin{equation}
\mu(S') \leq \delta^{3/4}.
\label{eq:Sprimesmall}
\end{equation}

Our next step is proving the following lower bound on the measure of the set $S$:
\begin{equation}
\mu(S) \geq \left(1-{\gamma^{\ell/20}}\right)p^{mn} \geq \frac{p^{mn}}{2}. 
\label{eq:Slarge}
\end{equation}
Suppose that \eqref{eq:Slarge} does not hold, i.e., $\mu(S)<\left(1-{\gamma^{\ell/20}}\right)p^{mn}$.
We first bound $\kappa_1$ as follows (we use the Sidorenko property of $K_{m,n}$,
i.e., $t(K_{m,n},W)\ge p^{mn}$, in the penultimate inequality):
\begin{align}
\kappa_1
 & = \int_{[0,1]}f_1(x)g_K(x)\dd x \nonumber \\
 & \ge \int_{[0,1]}\gamma^{\ell/100}g_K(x)\dd x+\int_{S}\left(f_1(x)-\gamma^{\ell/100}\right)g_K(x)\dd x \nonumber \\
 & \ge \gamma^{\ell/100}\int_{[0,1]}g_K(x)\dd x-\gamma^{\ell/100}\mu(S) \nonumber \\
 & > \gamma^{\ell/100}\left(t(K_{m,n},W)-\left(1-\gamma^{\ell/20}\right)p^{mn}\right) \nonumber \\
 & \ge \gamma^{\ell/100} t(K_{m,n},W) \left(1-1+\gamma^{\ell/20}\right) \ge \gamma^{\ell/16} t(K_{m,n},W).
 \label{eq:k1ub}
\end{align} 
On the other hand, we obtain using the assumption $\|g_K\|_2>\frac{\kappa_1}{\gamma^{\ell/8}}$ that
\begin{align*}
\kappa_1
 & < \gamma^{\ell/8}\|g_K\|_2 = \gamma^{\ell/8}\left(\int_{[0,1]}g_K(x)^2\dd x\right)^{1/2} \\
 & \le \gamma^{\ell/8}\|g_K\|_1^{1/2}\|g_K\|_\infty^{1/2} \\
 & \le \gamma^{\ell/8}t(K_{m,n},W)^{1/2}t(K_{m-1,n},W)^{1/2},
\end{align*}
which implies using Lemma~\ref{lm:K2n2nC4b}, the Sidorenko property of $K_{m,n}$, $\gamma\le p^{16}$ and $\ell\ge n+\nume{H}$ that
\begin{align}
\kappa_1
 & < \gamma^{\ell/8}t(K_{m,n},W)^{1/2}t(K_{m,n},W)^{(m-1)/2m} \nonumber \\
 & \le \gamma^{\ell/16} p^n t(K_{m,n},W)^{(2m-1)/2m} \le \gamma^{\ell/16}t(K_{m,n},W).
 \label{eq:k1lb}
\end{align}
However, the estimates \eqref{eq:k1ub} and \eqref{eq:k1lb} contradict each other.
Hence, we conclude that \eqref{eq:Slarge} holds.

\begin{figure}
\begin{center}
\epsfbox{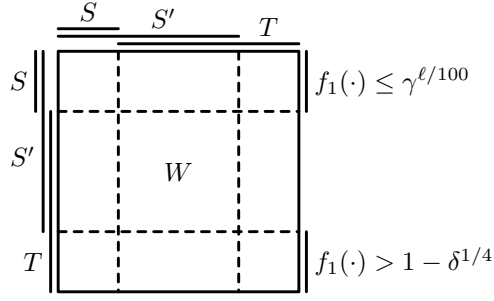}
\end{center}
\caption{The illustration of the definition of the sets $S$, $S'$ and $T$
         from the proof of Claim~\ref{cl:univ3}.}
\label{fig:SS}
\end{figure}

We next analyze the graphon $W[T]$ where $T=[0,1]\setminus S$ and
start with estimating the density of $W[T]$;
we refer to Figure~\ref{fig:SS} for the illustration of the relation of the sets $S$, $S'$ and $T$.
As the first step to estimate the density of $W[T]$,
we bound the density of $W$ between the sets $S$ and $[0,1]\setminus S'$ (we refer to Figure~\ref{fig:SS} and
recall that $\lambda_1\in [p_0,1]$):
\begin{align*}
\mu(S)\gamma^{\ell/100} & \ge \int_S f_1(x)\dd x
                       = \int_S \frac{1}{\lambda_1}\left(\int_{[0,1]}W(x,y)f_1(y)\dd y\right)\dd x
                       \ge \int_{S\times [0,1]}W(x,y)f_1(y)\dd x\dd y\\
		     & \ge \int_{S\times([0,1]\setminus S')}W(x,y)f_1(y)\dd x\dd y
		       \ge (1-\delta^{1/4})\int_{S\times([0,1]\setminus S')}W(x,y)\dd x\dd y,
\end{align*}
which yields that
\begin{equation}
\int_{S\times([0,1]\setminus S')}W(x,y)\dd x\dd y\le\frac{\gamma^{\ell/100}\mu(S)}{1-\delta^{1/4}}\le 2\gamma^{\ell/100}\mu(S).
\label{eq:fact23}
\end{equation}
Hence, we can estimate the density of $W$ on $T^2$ using \eqref{eq:fact23} as follows (we again refer to Figure~\ref{fig:SS}):
\begin{align*}
\int_{T^2}W(x,y)\dd x\dd y
  & = \int_{[0,1]^2}W(x,y)\dd x\dd y-\int_{S^2}W(x,y)\dd x\dd y-2\int_{S\times ([0,1]\setminus S)}W(x,y)\dd x\dd y \\
  & \ge p - \mu(S)^2 - 2\mu(S)\cdot\mu(S'\setminus S) - \int_{S\times([0,1]\setminus S')}W(x,y)\dd x\dd y\\
  & \ge p - \mu(S)^2 - 2\mu(S)\cdot\mu(S'\setminus S) - 4\gamma^{\ell/100}\lambda_1\mu(S) \\
  & \ge p - \mu(S)^2 - 2\mu(S)\cdot\mu(S') - 4\gamma^{\ell/100}\lambda_1\mu(S),
\end{align*}
which combines with estimates \eqref{eq:Ssmall}, \eqref{eq:Sprimesmall}, $\delta\le p^2/2^{20}$ by \eqref{eq:dsmallg}, and
$\gamma^{\ell/100}\le\gamma_0^{n_0/100}\le p^{100}/2^{128}$ by the choice of $n_0$ and $\gamma_0$ to
\begin{align*}
\int_{T^2}W(x,y)\dd x\dd y
  & \ge p - \frac{p\mu(S)}{2^{16}} - 2\mu(S)\delta^{3/4} - 4\gamma^{\ell/100}\mu(S) \\
  & \ge p - \frac{p\mu(S)}{2^{16}} - \frac{p\mu(S)}{2^{14}} - \frac{p\mu(S)}{2^{126}}
    \ge p\left(1-\frac{\mu(S)}{2^{10}}\right).
\end{align*}
Hence, the density $p_T$ of the graphon $W[T]$ is at least
\begin{equation}
p_T \ge \frac{\int_{T^2}W(x,y)\dd x\dd y}{(1-\mu(S))^2}
    \ge p \left(1-\frac{\mu(S)}{2^{10}}\right)(1+2\mu(S)) \ge p\left(1+\frac{511\mu(S)}{256}\right),
\label{eq:densWT}
\end{equation}
which combines with \eqref{eq:Slarge} to
\begin{equation}
p_T \ge p\left(1+\frac{511\mu(S)}{512}\right) > p\left(1+p^{mn}/4\right).
\label{eq:densWTq}
\end{equation}
Note that if $q$ were equal to $1$,
then \eqref{eq:densWTq} would imply that $p_T>1$ (recall that $q\le p(1+p_0^{mn}/4)$),
which is impossible, and so it holds that $q<1$.
Finally, we estimate $\gamma_T=t(C_4,W[T])-p_T^4$ as follows:
\begin{align}
\gamma_T = t(C_4,W[T])-p_T^4
         & \le \frac{t(C_4,W)}{(1-\mu(S))^4}-p^4\left(1+\frac{511\mu(S)}{256}\right)^4\nonumber \\
	 & \le (p^4+\gamma)(1+5\mu(S))-p^4\left(1+\frac{511\mu(S)}{256}\right)^4\nonumber \\
	 & \le \gamma+5\gamma\mu(S)+5p^4\mu(S)-\frac{511p^4\mu(S)}{64}\nonumber \\
	 & \le \gamma+\frac{5p^4\mu(S)}{2^{200}}+5p^4\mu(S)-\frac{511p^4\mu(S)}{64}\le\gamma.
\label{eq:gammaWT}
\end{align}
Since the density $p_T$ of $W[T]$ is larger than $q$ by \eqref{eq:densWTq} (again recall that $q\le p(1+p_0^{mn}/4)$) and
$\gamma_T\le\gamma_0$ by \eqref{eq:gammaWT},
the choice of $q$ implies that
\[t(H^\bullet\oplus K_{m|\ell,n}^\bullet,W[T]) \ge p_T^{\nume{H}+\ell+mn}.\]
Hence, we obtain using \eqref{eq:tWh}, \eqref{eq:Ssmall} and \eqref{eq:densWT} that
\begin{align*}
t(H^\bullet\oplus K_{m|\ell,n}^\bullet,W)
 & \ge (1-\mu(S))^{\numv{H}+\ell+m+n-2}p_T^{\nume{H}+\ell+mn} \\
 & \ge p^{\nume{H}+\ell+mn}(1-\mu(S))^{\numv{H}+\ell+m+n-2}\left(1+\frac{511\mu(S)}{256}\right)^{\nume{H}+\ell+mn} \\
 & \ge p^{\nume{H}+\ell+mn}\left(\left(1-\mu(S)\right)\left(1+\frac{511\mu(S)}{256}\right)\right)^{\numv{H}+\ell+m+n-2} \\
 & = p^{\nume{H}+\ell+mn}\left(1-\mu(S)+\frac{511\mu(S)}{256}-\frac{511\mu(S)^2}{256}\right)^{\numv{H}+\ell+m+n-2} \\
 & \ge p^{\nume{H}+\ell+mn}\left(1-\mu(S)+\frac{511\mu(S)}{256}-\frac{511\mu(S)}{2^{24}}\right)^{\numv{H}+\ell+m+n-2} \ge p^{\nume{H}+\ell+mn},
\end{align*} 
which implies that $W$ satisfies \eqref{eq:violate} under the assumptions made in the statement of the claim.
\end{proof}
Claims~\ref{cl:univ1}, \ref{cl:univ2} and \ref{cl:univ3} imply that $W$ satisfies \eqref{eq:violate},
which contradicts the choice of $W$ as violating \eqref{eq:violate}, and
so completes the proof of the theorem.
\end{proof}

Theorem~\ref{thm:universal}
together with the Counting Lemma (Lemma~\ref{lm:cutdistance}) applied for $C_4$ 
yields the following.

\begin{corollary}
\label{cor:universal}
For every graph $G$ with girth at least $50$ and 
every real number $p_0 \in (0,1)$,
there exist a real $\varepsilon_0>0$,
a positive integer $n_0$ and
a connected rooted graph $H^\bullet$ that contains $G$ as an induced subgraph
such that the following holds
for every graphon $W$ with density $p \geq p_0$ such that $\cut{W-p} \leq \varepsilon_0$:
\[t(H^\bullet\oplus K_{m|\ell,n}^\bullet,W)\ge p^{\nume{H}+mn+\ell}\]
for all even integers $m,n,\ell\ge n_0$ such that $m$ is divisible by $5$ and $\ell\ge n + \nume{H}$.
\end{corollary}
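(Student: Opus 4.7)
The plan is to derive Corollary~\ref{cor:universal} directly from Theorem~\ref{thm:universal} by converting the hypothesis on $\cut{W-p}$ into the hypothesis on $t(C_4,W)-p^4$ via Lemma~\ref{lm:cutdistance}. First, I would invoke Theorem~\ref{thm:universal} with the given graph $G$ and real $p_0$ to obtain a constant $\gamma_0>0$, a positive integer $n_0$, and a connected rooted graph $H^\bullet$ containing $G$ as an induced subgraph, such that \eqref{eq:thmuniv} holds for every graphon $W$ with density $p\ge p_0$ and $t(C_4,W)-p^4\le\gamma_0$, for all admissible $m$, $n$, $\ell$.

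Next I would define $\varepsilon_0 := \gamma_0/4$ and keep the same $n_0$ and $H^\bullet$ as produced by Theorem~\ref{thm:universal}. The key observation, recorded in the remark immediately following Lemma~\ref{lm:cutdistance}, is that applying Lemma~\ref{lm:cutdistance} with $H=C_4$, $W_1=W$, and $W_2$ the constant graphon equal to $p$ gives
\[
t(C_4,W)-p^4 \;=\; t(C_4,W_1)-t(C_4,W_2) \;\le\; \|C_4\|\cdot\cut{W-p} \;=\; 4\cut{W-p}.
\]
Consequently, any graphon $W$ with density $p\ge p_0$ and $\cut{W-p}\le\varepsilon_0=\gamma_0/4$ automatically satisfies $t(C_4,W)-p^4\le\gamma_0$, which is exactly the hypothesis required by Theorem~\ref{thm:universal}.

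Finally, for any such $W$ and any even integers $m,n,\ell\ge n_0$ with $m$ divisible by $5$ and $\ell\ge n+\|H\|$, Theorem~\ref{thm:universal} yields
\[
t(H^\bullet\oplus K_{m|\ell,n}^\bullet,W) \;\ge\; p^{\|H\|+mn+\ell},
\]
which is precisely the conclusion of Corollary~\ref{cor:universal}. There is no genuine obstacle here: the entire corollary is a one-line reduction whose only content is the translation from the cut-norm condition to the $C_4$-density condition, and the quantitative constant $\varepsilon_0=\gamma_0/4$ is forced by the sharp form of Lemma~\ref{lm:cutdistance} on $C_4$.
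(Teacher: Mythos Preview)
Your proposal is correct and matches the paper's approach exactly: the paper states that the corollary follows from Lemma~\ref{lm:cutdistance} applied with $H=C_4$ (which gives $t(C_4,W)-p^4\le 4\cut{W-p}$) together with Theorem~\ref{thm:universal}. Your choice $\varepsilon_0=\gamma_0/4$ is precisely the constant this reduction forces.
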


\section{Non-local regime}
\label{sec:nonlocal}

In this section, we present the result covering the non-local regime,
i.e., the regime when one of the $k$ color classes is far from being quasirandom with density $1/k$.
We start with the lemma that informally says that
if the density of a graph $H$ in a given graphon is small,
then the graphon contains a large sparse part.
Since it is folklore that complete graphs
satisfy the Kohayakawa-Nagle-R\"odl-Schacht Conjecture (Conjecture~\ref{conj:KNRS} in Section~\ref{sec:concl}),
a simple proof of this fact can be found in~\cite[Theorem 4.2]{Lee21},
the next lemma holds for $H$ being a complete graph and so it holds for all graphs $H$.
We include a self-contained proof in Appendix for completeness.

\begin{lemma}
\label{lm:omegaalpha}
For every graph $H$ and real $\delta\in (0,1)$
there exist reals $\omega_0>0$ and $\alpha_0>0$ such that 
every graphon $W$ satisfies at least one of the following:
\begin{itemize}
\item $t(H, W) \geq \omega_0$, or
\item $\alpha_{\delta}(W)\ge\alpha_0$.
\end{itemize}
\end{lemma}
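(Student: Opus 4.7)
Proof Plan:

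I argue by compactness. Suppose for contradiction that no such $\omega_0,\alpha_0$ exist: then there is a sequence of graphons $W_n$ with $t(H,W_n)\to 0$ and $\alpha_\delta(W_n)\to 0$. By Lov\'asz--Szegedy compactness of the graphon space under the cut distance, a subsequence (still denoted $W_n$) converges in cut distance to some graphon $W^*$. Lemma~\ref{lm:cutdistance} gives $t(H,W^*)=0$, and since $H$ is a subgraph of $K_v$ with $v=|V(H)|$ and a graphon takes values in $[0,1]$, we have $t(H,W)\ge t(K_v,W)$ for every graphon $W$, so $t(K_v,W^*)=0$ as well.

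The heart of the proof is the structural claim that every graphon $W$ with $t(K_v,W)=0$ admits some $h\in\AAA_{\delta/2}(W)$ with $\|h\|_1\ge\beta(v,\delta)$ for a positive constant $\beta(v,\delta)$, which I would prove by induction on $v$. The base $v=2$ is immediate since $t(K_2,W)=0$ forces $W\equiv 0$ a.e., so $h\equiv 1$ witnesses $\alpha_{\delta/2}(W)\ge 1$. For $v\ge 3$, if the density $p$ of $W$ is smaller than $\delta/2$ then $h\equiv 1$ again suffices; otherwise the fibering identity obtained from \eqref{eq:tWh} by peeling off one vertex of $K_v$,
\[
t(K_v,W)=\int_{[0,1]}\deg_W(x)^{v-1}\;t\bigl(K_{v-1},W[h_x]\bigr)\,\dd x,\qquad\mbox{where }h_x(y):=W(x,y),
\]
together with the non-negativity of the integrand, selects an $x$ with $\deg_W(x)\ge p\ge\delta/2$ for which the neighborhood graphon $W[h_x]$ satisfies $t(K_{v-1},W[h_x])=0$. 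The inductive hypothesis supplies $h'\in\AAA_{\delta/2}(W[h_x])$ with $\|h'\|_1\ge\beta(v-1,\delta)/2$, and a change-of-variables along the CDF map $\Phi_{h_x}$ of $h_x$ lifts $h'$ to $h:=h_x\cdot(h'\circ\Phi_{h_x})$ on $[0,1]$: one verifies $\|h\|_1=\|h_x\|_1\cdot\|h'\|_1\ge(\delta/4)\beta(v-1,\delta)$ and that the density of $W[h]$ coincides with the density of $(W[h_x])[h']\le\delta/2$, so setting $\beta(v,\delta):=(\delta/4)\beta(v-1,\delta)$ closes the induction.

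Given such an $h^*\in\AAA_{\delta/2}(W^*)$ with $\|h^*\|_1\ge\beta(v,\delta)>0$, the transfer back to the sequence $W_n$ is routine: the standard estimate $\bigl|\int_{[0,1]^2}(W_n-W^*)(x,y)f(x)g(y)\,\dd x\,\dd y\bigr|\le 4\cut{W_n-W^*}$ valid for $f,g:[0,1]\to[0,1]$ implies that the density of $W_n[h^*]$ converges to the density of $W^*[h^*]$, which is at most $\delta/2<\delta$. Therefore $h^*\in\AAA_\delta(W_n)$ for all large $n$, whence $\alpha_\delta(W_n)\ge\|h^*\|_1\ge\beta(v,\delta)$, contradicting $\alpha_\delta(W_n)\to 0$. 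The main obstacle is the lifting step in the induction: the change-of-variables along $\Phi_{h_x}$ must show that the $L^1$-norms multiply and that the densities of $W[h]$ and $(W[h_x])[h']$ agree, with some care needed where $h_x$ vanishes and $\Phi_{h_x}$ therefore fails to be strictly increasing.
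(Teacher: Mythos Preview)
Your argument is correct, with one caveat worth flagging: Lov\'asz--Szegedy compactness yields convergence in the cut \emph{distance} $\delta_\square$ (an infimum over measure-preserving relabelings), whereas your transfer step invokes $\cut{W_n-W^*}$ directly. This is harmless because both $t(H,\cdot)$ and $\alpha_\delta(\cdot)$ are invariant under relabeling, so after passing to the subsequence you may replace each $W_n$ by a relabeled copy and assume $\cut{W_n-W^*}\to 0$; you should say so explicitly.

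Your route is genuinely different from the paper's, though the two share the same engine. The paper proves the quantitative dichotomy directly by induction on $r$: either few vertices $x$ have small degree, in which case either every neighbourhood graphon $W[f_x]$ satisfies $t(K_{r-1},W[f_x])\ge\omega_{r-1}$ (forcing $t(K_r,W)\ge\omega_r$) or some neighbourhood has $\alpha_\delta(W[f_x])\ge\alpha_{r-1}$, which lifts to $\alpha_\delta(W)\ge\alpha_r$; or many vertices have small degree, in which case that set already witnesses large $\alpha_\delta(W)$. This yields explicit recursive constants. You instead use compactness to reduce to the extreme case $t(K_v,W^*)=0$ and then run essentially the same neighbourhood-peeling induction there, where the dichotomy collapses to a single branch (since the fibering integral vanishes, almost every high-degree neighbourhood has $t(K_{v-1},\cdot)=0$). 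The paper's approach is self-contained and gives effective bounds on $\omega_0,\alpha_0$; yours has a cleaner inductive step but at the price of invoking compactness and losing any explicit dependence on $H$ and $\delta$. Both rely on the same lifting identity $(W[h_x])[h']\cong W[h_x\cdot(h'\circ\Phi_{h_x})]$, which the paper states in the unwound integral form via~\eqref{eq:tWh}.
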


The next theorem is the main result of this section.
The theorem asserts that
if a graphon $W$ is far from the quasirandom graphon and does not contain a sparse part,
then the density of $H^\bullet\oplus K_{m|\ell,n}^\bullet$ in $W$ is
at least the density of $H^\bullet\oplus K_{m|\ell,n}^\bullet$ in the quasirandom graphon with the same density.

\begin{theorem}
\label{thm:bip}
For all positive reals $p_0\in (0,1)$, $\delta\in (0,1)$ and $\varepsilon_0\in (0,1)$, and
rooted graph $H^{\bullet}$,
there exist a positive real $\alpha_0$ and a positive integer $n_0$ such that
every graphon $W$ with density $p\ge p_0$ and with $\cut{W-p}\ge\varepsilon_0$
satisfies at least one of the following:
\begin{itemize}
\item $\alpha_{\delta}(W)\ge\alpha_0$, or
\item it holds for all integers $m,n\ge n_0$ and $\ell\le mn/4$ that
      \[t\left(H^\bullet\oplus K_{m|\ell,n}^\bullet,W\right)\ge p^{\nume{H}+mn+\ell}.\]
\end{itemize}
\end{theorem}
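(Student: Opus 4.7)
The plan is to combine Lemma~\ref{lm:omegaalpha} (which guarantees a positive $H$-density unless $W$ has a sparse part) with Lemma~\ref{lm:Kab} (which turns the cut-norm gap into an exponentially large multiplicative gain for $t(K_{m|\ell,n}, W)$) and to amalgamate the two parts through the path of length $\ell$. First, apply Lemma~\ref{lm:omegaalpha} to the underlying graph of $H^\bullet$ with parameter $\delta$, producing constants $\omega_1,\alpha_1 > 0$ such that either $\alpha_\delta(W) \geq \alpha_1$ (in which case I set $\alpha_0 \leq \alpha_1$ and the first conclusion of the theorem holds) or $t(H, W) \geq \omega_1$. In the latter case, since $\cut{W-p} \geq \varepsilon_0$ and $\ell \leq mn/4$, Lemma~\ref{lm:Kab} yields
\[
t(K_{m|\ell,n}, W) \;\geq\; p^{mn+\ell}(1+c)^{(mn+\ell)/4}, \qquad c := 10^{-9}\varepsilon_0^{16},
\]
an exponentially large factor over the quasirandom value $p^{mn+\ell}$.

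Next, set $g(x) := t_x(H^\bullet, W)$ and $h(x) := t_x(K_{m|\ell,n}^\bullet, W)$, so that
\[
t(H^\bullet \oplus K_{m|\ell,n}^\bullet, W) \;=\; \int_{[0,1]} g(x) h(x) \, dx \;\geq\; \frac{\omega_1}{2}\int_B h(x)\,dx,
\]
where $B = \{x : g(x) \geq \omega_1/2\}$; Markov's inequality applied to $\int g = t(H,W) \geq \omega_1$ yields $|B| \geq \omega_1/2$. The desired bound then reduces to showing that $\int_B h \geq \eta \cdot t(K_{m|\ell,n}, W)$ for some constant $\eta > 0$ depending only on $H, \delta, \varepsilon_0, p_0$: for then, choosing $n_0$ large enough so that $(\omega_1\eta/2)(1+c)^{(mn+\ell)/4} \geq p^{\|H\|}$ --- possible since $mn+\ell \geq n_0^2$ and the left side grows exponentially --- yields $t(H^\bullet \oplus K_{m|\ell,n}^\bullet, W) \geq p^{\|H\|+mn+\ell}$.

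The technical heart of the proof, and the main obstacle, is the claim $\int_B h \geq \eta \cdot t(K_{m|\ell,n}, W)$, equivalently that the distribution of the root of a random $K_{m|\ell,n}^\bullet$-homomorphism (with density $h/\int h$) is not concentrated on the bad set $A = B^c$; the trivial estimate $\int_A h \leq |A|$ is insufficient because $t(K_{m|\ell,n}, W)$ can itself be exponentially small in $mn$. The plan has two parts. First, bound $|A|$ by a small constant $\alpha^*$ via iterated applications of Lemma~\ref{lm:omegaalpha} to the restriction $W[A]$: if $|A| \geq \alpha^*$ then either $\alpha_\delta(W[A]) \geq \alpha_1$ (which lifts back to contradict $\alpha_\delta(W) < \alpha_0$, provided $\alpha_0 < \alpha_1 \alpha^*$) or $t(H, W[A]) \geq \omega_1$, which, via the pointwise inequality $t_x(H^\bullet, W) \geq |A|^{|H|-1} t_x(H^\bullet, W[A])$, forces too many $H$-copies rooted in $A$ and contradicts $g < \omega_1/2$ there. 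Second, refine the control on $\int_A h$ by writing $h = W^\ell k$ with $k(y) = t_y(K_{m,n}^\bullet, W)$ and using the Cauchy-Schwarz-type estimate $\int_A h = \langle k, W^\ell \mathbf{1}_A\rangle \leq \lambda_1^\ell \sqrt{|A|}$, together with matching lower bounds on $\int h$ involving the same $\lambda_1^\ell$ and the no-sparse-part hypothesis to rule out the pathological scenario where $K_{m|\ell,n}$-mass concentrates on the small set $A$. Calibrating all constants uniformly in $m,n,\ell$ so that a single choice of $n_0$ works for every admissible triple is where the bulk of the bookkeeping lies.
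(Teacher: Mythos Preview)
Your high-level plan---combine Lemma~\ref{lm:omegaalpha} with Lemma~\ref{lm:Kab} and glue the two pieces at the root---is exactly the paper's plan. The execution of what you call the ``technical heart'', however, has a genuine gap and is also unnecessarily complicated.

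First, the threshold $\omega_1/2$ you use to define $B$ is too large for your bounding-$|A|$ argument to produce a \emph{small} $\alpha^*$. If $|A|\ge\alpha^*$ and $t(H,W[A])\ge\omega_1$, then the best you can extract from the pointwise inequality $t_x(H^\bullet,W)\ge |A|^{|H|-1}t_x(H^\bullet,W[A])$ is that some $x\in A$ has $g(x)\ge \omega_1(\alpha^*)^{|H|-1}$; for this to contradict $g(x)<\omega_1/2$ you need $(\alpha^*)^{|H|-1}\ge 1/2$, i.e.\ $\alpha^*$ close to~$1$, which is useless. The ``iterated applications'' you allude to do not repair this. The paper fixes this by taking the threshold to be a much smaller constant $d_0=\omega_0\bigl(p_0(\varepsilon_0/10)^{16}\bigr)^{|H|}$, so that $A=\{x:g(x)\le d_0\}$ and, reversing your computation, $|A|>p_0(\varepsilon_0/10)^{16}$ forces $t(H,W[A])<\omega_0$, hence $\alpha_\delta(W)\ge\alpha_0$ via Lemma~\ref{lm:omegaalpha}.

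Second, once $|A|$ is this tiny (at most $p_0(\varepsilon_0/10)^{16}$), the paper avoids your spectral estimate $\langle k,W^\ell\mathbf 1_A\rangle\le\lambda_1^\ell\sqrt{|A|}\,\|k\|_2$ and the attendant problem of matching it with a lower bound on $\int h$: it simply replaces $W$ by the graphon $W'$ obtained by zeroing out all rows and columns indexed by $A$. Since $|A|$ is tiny compared to $\varepsilon_0$, the density $p'$ of $W'$ and the cut norm $\cut{W'-p'}$ are within $O(|A|)$ of $p$ and $\cut{W-p}$, so Lemma~\ref{lm:Kab} applies \emph{to $W'$} and yields $t(K_{m|\ell,n},W')\ge p^{mn+\ell}(1+c')^{mn/4+\ell/4}$. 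Because $W'\le W$ pointwise and $W'$ vanishes on $A$, one has $\int_B t_x(K_{m|\ell,n}^\bullet,W)\,dx\ge\int_B t_x(K_{m|\ell,n}^\bullet,W')\,dx=t(K_{m|\ell,n},W')$, and therefore
\[
t(H^\bullet\oplus K_{m|\ell,n}^\bullet,W)\ \ge\ d_0\,t(K_{m|\ell,n},W')\ \ge\ d_0\,p^{mn+\ell}(1+c')^{mn/4+\ell/4}\ \ge\ p^{\|H\|+mn+\ell}
\]
for $n_0$ chosen so that $d_0(1+c')^{n_0^2/4}\ge 1$. No eigenvalue arithmetic is needed; the step you flag as the ``main obstacle'' evaporates once Lemma~\ref{lm:Kab} is applied to $W'$ rather than to $W$.
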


\begin{proof}
Fix the reals $p_0$, $\delta$ and $\varepsilon_0$ and the rooted graph $H^{\bullet}$.
Let $\omega_0$ and $\alpha'_0$ be the reals obtained by applying Lemma~\ref{lm:omegaalpha} with $\delta$ and $H$, and
let $d_0=\omega_0(p_0\varepsilon_0/10)^{16\numv{H}}$.
Choose $n_0\ge 2$ such that
\[d_0\left(1+10^{-11}\varepsilon_0^{16}\right)^{n_0^2/4}\ge 1.\]
We show that the statement of the theorem holds
for this choice of $n_0$ and for $\alpha_0=\alpha'_0 p_0 (\varepsilon_0/10)^{16}$.

Fix a graphon $W$ and
let $A$ be the set of $x\in [0,1]$ such that $t_x(H^{\bullet},W)\le d_0$.
First suppose that $\mu(A)>p_0(\varepsilon_0/10)^{16}$ and
note that $t(H,W[A])\le d_0/\mu(A)^{\numv{H}}<\omega_0$.
Hence, the graphon $W[A]$ does not satisfy the first conclusion of Lemma~\ref{lm:omegaalpha} and
so it satisfies the second conclusion, i.e., $\alpha_{\delta}(W[A])\ge\alpha'_0$.
It follows that $\alpha_{\delta}(W)\ge\alpha'_0\mu(A)\ge\alpha_0$.
Therefore, we can assume in the rest of the proof of the theorem that $\mu(A)\le p_0(\varepsilon_0/10)^{16}$.

Let $W'$ be the graphon obtained from the graphon $W$ by setting 
\[W'(x,y)=\begin{cases}
          0 & \mbox{if $x\in A$ or $y\in A$, and}\\
	  W(x,y) & \mbox{otherwise.}
	  \end{cases}\]
Note that the density $p'$ of $W'$ is at least $p-3\mu(A)\ge p(1-3(\varepsilon_0/10)^{16})$ and
\[\cut{W-p'}\ge\cut{W-p}-\lvert p-p'\rvert\ge\cut{W-p}-3(\varepsilon_0/10)^{16}.\]
As $\cut{W-p}\ge\varepsilon_0$, we obtain using Lemma~\ref{lm:Kab} (note that we need that $\ell\le mn/4$ to apply the lemma) that
\begin{align}
t(K_{m|\ell,n},W')
& \ge p^{mn+\ell}\left(1-3(\varepsilon_0/10)^{16}\right)^{mn+\ell}\left(1+10^{-9}\left(1-3\varepsilon_0^{15}/10^{16}\right)^{16}\varepsilon_0^{16}\right)^{mn/4+\ell/4} \nonumber\\
& \ge p^{mn+\ell}\left(1-12(\varepsilon_0/10)^{16}\right)^{mn/4+\ell/4}\left(1+10^{-10}\varepsilon_0^{16}\right)^{mn/4+\ell/4} \nonumber\\
& \ge p^{mn+\ell}\left(\left(1-10^{-14}\varepsilon_0^{16}\right)\left(1+10^{-10}\varepsilon_0^{16}\right)\right)^{mn/4+\ell/4} \nonumber\\
& \ge p^{mn+\ell}\left(1+10^{-11}\varepsilon_0^{16}\right)^{mn/4+\ell/4}. \label{eq:tKmlnW}
\end{align}
We next estimate the density of $H^\bullet\oplus K_{m|\ell,n}^\bullet$ in the graphon $W$ as follows:
\begin{align*}
t(H^\bullet\oplus K_{m|\ell,n}^\bullet,W) & = \int_{[0,1]} t_x(H^\bullet, W) t_x(K_{m|\ell,n}^\bullet, W)\dd x \\
                                          & \ge d_0 \int_{[0,1]\setminus A} t_x(K_{m|\ell,n}^\bullet,W) \dd x 
					    \ge d_0 \int_{[0,1]\setminus A} t_x(K_{m|\ell,n}^\bullet,W') \dd x
					    = d_0 t(K_{m|\ell,n},W').
\end{align*}
Hence, we obtain using \eqref{eq:tKmlnW} and the choice of $n_0$ that
\[
t(H^\bullet\oplus K_{m|\ell,n}^\bullet,W) 
  \ge d_0 p^{mn+\ell}\left(1+10^{-11}\varepsilon_0^{16}\right)^{mn/4} 
  \ge p^{mn+\ell}\ge p^{\nume{H}+mn+\ell}.
\]
The statement of the theorem now follows.
\end{proof}

\section{Main result}
\label{sec:main}

We are now ready to prove our main results---Theorem~\ref{thm:main} and more general Theorem~\ref{thm:maink}.
Since there are graphs with arbitrarily large chromatic number and girth at least $50$
by a classical result of Erd\H os~\cite{Erd59}, also see~\cite{AloS16},
the former is an immediate corollary of the next theorem,
which is a special case of Theorem~\ref{thm:step} proven later in this section.
However, we include a short proof of the next theorem to highlight the main steps of the argument in the case of $2$ colors.

\begin{theorem}
\label{thm:pair}
For every graph $G$ with girth at least $50$,
there exist a positive integer $N_0$ and
a connected rooted graph $H^\bullet$ that contains $G$ as an induced subgraph
such that the following holds for any graphon $W$:
\[
t\left(H^\bullet \oplus K_{m|n+\nume{H}, n}^\bullet,W\right)+t\left(H^\bullet \oplus K_{m|n+\nume{H}, n}^\bullet,1-W\right) \geq 
2^{1-\nume{H^\bullet \oplus K_{m|n+\nume{H}, n}^\bullet}}  = 2^{1-2\nume{H}-(m+1)n}
\]
for all even integers $m,n\ge N_0$ such that $m$ is divisible by $5$.
\end{theorem}

\begin{proof}
Fix a graph $G$ and an integer $k$ as in the statement.
We apply Corollary~\ref{cor:universal} with $p_0=1/4$
to obtain a positive real $\varepsilon_0$, a positive integer $n_0$ and
a connected rooted graph $H^\bullet$ containing $G$ as an induced subgraph such that
every graphon $W$ with density $p \geq 1/4$ and with $\cut{W-p} \leq \varepsilon_0$ satisfies that
\[t\left(H^\bullet\oplus K_{m|n+\nume{H},n}^\bullet,W\right)\ge p^{2\nume{H}+(m+1)n}\]
for all even integers $m,n\ge n_0$ such that $m$ is divisible by $5$.
We next apply Theorem~\ref{thm:bip} with $p_0=1/4$, $\delta=\min\{\varepsilon_0/2,1/16\}$, $\varepsilon_0$ and $H^\bullet$
to obtain a positive real $\alpha_0$ and an integer $n_0$ with the properties given in Theorem~\ref{thm:bip}.
We now set $N_0$ in a way that $N_0$ is at least $\max\{8,n_0,\nume{H}\}$ and 
the following holds for all $m,n\ge N_0$:
\begin{align}
\alpha_0^{\numv{H}+\nume{H}+m+2n-1} & \ge 1.5^{1-2\nume{H}-(m+1)n}\label{eq2:choiceN0}.
\end{align}

We now show that the statement of theorem holds for any graphon $W$ and
all even integers $m,n\ge N_0$ such that $m$ is divisible by $5$.
To do so, fix a graphon $W$.
By symmetry, we can assume that the density $p$ of $W$ is at least $1/2$ (otherwise,
we consider the graphon $1-W$ instead of $W$).

Suppose that $\alpha_\delta(1-W)\ge\alpha_0$, and
let $h:[0,1]\to [0,1]$ be such that $\lvert h\rvert_1\ge\alpha_0$ and
the density of $(1-W)[h]=1-W[h]$ is at most $\delta$.
Let $q$ be the density of $W[h]$; note that $q\ge 1-\delta$.
The triangle inequality implies that
$\cut{W[h]-q}\le\cut{1-W[h]}+(1-q)\le 2\delta\le\varepsilon_0$.
Hence, Corollary~\ref{cor:universal}, \eqref{eq:tWh} and \eqref{eq2:choiceN0} yield that
\begin{align*}
t\left(H^\bullet\oplus K_{m|n+\nume{H},n}^\bullet,W\right)
& \ge \alpha_0^{\numv{H}+\nume{H}+m+2n-1}t\left(H^\bullet\oplus K_{m|n+\nume{H},n}^\bullet,W[h]\right)\\
& \ge \alpha_0^{\numv{H}+\nume{H}+m+2n-1}q^{2\nume{H}+(m+1)n} \\
& \ge 1.5^{1-2\nume{H}-(m+1)n}(1-\delta)^{2\nume{H}+(m+1)n}\ge 2^{1-2\nume{H}-(m+1)n}.
\end{align*}
We conclude that if $\alpha_\delta(1-W)\ge\alpha_0$ or (symmetrically) $\alpha_\delta(W)\ge\alpha_0$,
then the statement of the theorem holds.
In the rest of the proof,
we assume that both $\alpha_\delta(W)$ and $\alpha_\delta(1-W)$ are at most $\alpha_0$.

We now derive from Corollary~\ref{cor:universal} if $\cut{W-p}\le\varepsilon_0$ or
from Theorem~\ref{thm:bip} if $\cut{W-p}>\varepsilon_0$ that
\begin{equation}
t\left(H^\bullet\oplus K_{m|n+\nume{H},n}^\bullet,W\right)\ge p^{2\nume{H}+(m+1)n}.\label{eq:color1}
\end{equation}
In particular,
if $p\ge 3/4$, it follows that
\[
t\left(H^\bullet\oplus K_{m|n+\nume{H},n}^\bullet,W\right)\ge 1.5^{2\nume{H}+(m+1)n}\cdot 2^{-2\nume{H}-(m+1)n}\ge 2^{1-2\nume{H}-(m+1)n},
\]
and so the statement of the theorem holds.
On the other hand,
if $p<3/4$, then the density of $1-W$ is at least $p_0=1/4$, and
we can derive from Corollary~\ref{cor:universal} or from Theorem~\ref{thm:bip},
depending whether the cut distance between $1-W$ and $1-p$ is at most $\varepsilon_0$ or larger,
that
\begin{equation}
t\left(H^\bullet\oplus K_{m|n+\nume{H},n}^\bullet,1-W\right)\ge (1-p)^{2\nume{H}+(m+1)n}.\label{eq:color2}
\end{equation}
We combine \eqref{eq:color1} and \eqref{eq:color2} to obtain that
\[t\left(H^\bullet \oplus K_{m|n+\nume{H}, n}^\bullet,W\right)+t\left(H^\bullet \oplus K_{m|n+\nume{H}, n}^\bullet,1-W\right)
  \ge p^{2\nume{H}+(m+1)n}+(1-p)^{2\nume{H}+(m+1)n},\]
which is at least $2^{1-2\nume{H}-(m+1)n}$ by convexity of the function $x^{2\nume{H}+(m+1)n}$.
This completes the proof of the theorem.
\end{proof}

We are now ready to prove the more general result, which implies Theorem~\ref{thm:maink}.
To display the dependance on the length of the path between $H^\bullet$ and $K_{m,n}^\bullet$,
we decided to use $\ell$ throughout the proof to denote the length of this path.

\begin{theorem}
\label{thm:step}
For every graph $G$ with girth at least $50$ and every integer $k\ge 2$,
there exist a positive integer $N_k$ and
a connected rooted graph $H^\bullet$ that contains $G$ as an induced subgraph
such that the following holds
for any $k$ graphons $W_1, \dots, W_k$ such that $W_1+\cdots+W_k=1$:
\[
\sum_{i=1}^k t\left(H^\bullet \oplus K_{m|\ell, n}^\bullet, W_i\right) \geq k^{1-\nume{H}-mn-\ell}
\] 
for all even integers $m,n\ge N_k$ and $\ell=n+\nume{H}$ such that $m$ is divisible by $5$.
\end{theorem}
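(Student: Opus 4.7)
The plan is to induct on $k$. The base case $k=1$ is immediate: the only admissible graphon is $W_1\equiv 1$, for which $t(H^\bullet\oplus K_{m|\ell,n}^\bullet,W_1)=1=1^{1-\|H\|-mn-\ell}$ for any $H^\bullet$. For $k\ge 2$ assume the statement for $k-1$ colors, providing a threshold $N_{k-1}$. First fix parameters: apply Corollary~\ref{cor:universal} to $G$ with $p_0=\tfrac{1}{2k}$ to get a rooted graph $H^\bullet$ containing $G$, a real $\varepsilon_0>0$, and an integer $n_0^{\mathrm{loc}}$; critically, the construction of $H^\bullet$ in the proof of Theorem~\ref{thm:core} (the union of $G$ with two paths and a $C_4$) depends only on $G$, so the same $H^\bullet$ is reusable uniformly across the induction. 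Apply Theorem~\ref{thm:bip} with $p_0=\delta=\tfrac{1}{2k}$, $\varepsilon_0$, and $H^\bullet$ to obtain $\alpha_0\in(0,1)$ and $n_0^{\mathrm{bip}}$, and take $N_k$ to dominate $N_{k-1}$, $n_0^{\mathrm{loc}}$, $n_0^{\mathrm{bip}}$, and the quantitative thresholds below. Throughout write $G^\star=H^\bullet\oplus K_{m|\ell,n}^\bullet$, so that $\|G^\star\|=\|H\|+mn+\ell$ and $|V(G^\star)|=|H|+m+n+\ell-1$.

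Given graphons $W_1,\ldots,W_k$ with $W_1+\cdots+W_k=1$ and densities $p_i$, I split into two cases. In the \emph{regular} case, $\alpha_\delta(W_i)<\alpha_0$ for every $i$; since $h\equiv 1$ witnesses $\alpha_\delta(W_i)\ge 1$ whenever $p_i\le\delta$, this forces $p_i>\delta=\tfrac{1}{2k}$ for every $i$. For each $i$ I apply Corollary~\ref{cor:universal} when $\cut{W_i-p_i}\le\varepsilon_0$ and Theorem~\ref{thm:bip} otherwise; both conclude $t(G^\star,W_i)\ge p_i^{\|G^\star\|}$. Jensen's inequality applied to the convex function $x\mapsto x^{\|G^\star\|}$ with uniform weights $1/k$, together with $\sum p_i=1$, then yields
\[
\sum_{i=1}^k t(G^\star,W_i)\;\ge\;\sum_{i=1}^k p_i^{\|G^\star\|}\;\ge\;k\cdot(1/k)^{\|G^\star\|}\;=\;k^{1-\|G^\star\|}.
\]

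In the \emph{irregular} case some $W_i$ has $\alpha_\delta(W_i)\ge\alpha_0$, witnessed by $h\colon[0,1]\to[0,1]$ with $\|h\|_1\ge\alpha_0$ and density of $W_i[h]$ at most $\delta$. The graphons $\{W_j[h]\}_{j\ne i}$ take values in $[0,1]$ and sum pointwise to $1-W_i[h]$, so the total density of the other colors on the rescaled domain is at least $1-\delta$. The plan is to renormalize them into $k-1$ graphons $\tilde W_j$ summing to $1$ (for instance $\tilde W_j=W_j[h]/(1-W_i[h])$ off the exceptional set $\{W_i[h]=1\}$, and $\tilde W_j=1/(k-1)$ on it), apply the inductive hypothesis to obtain $\sum_{j\ne i}t(G^\star,\tilde W_j)\ge(k-1)^{1-\|G^\star\|}$, transfer this to a lower bound on $\sum_{j\ne i}t(G^\star,W_j[h])$ via a careful density comparison, and finally use the identity $t(G^\star,W_j)\ge\|h\|_1^{|V(G^\star)|}\,t(G^\star,W_j[h])$ to lift to $W_j$. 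Because $\|G^\star\|=\Theta(mn)$ grows quadratically while $|V(G^\star)|=\Theta(m+n+\ell)$ grows only linearly, the multiplicative gap $(k/(k-1))^{\|G^\star\|-1}$ gained by reducing to $k-1$ colors will overwhelm the polynomial loss $\|h\|_1^{|V(G^\star)|}\ge\alpha_0^{|V(G^\star)|}$ once $m,n\ge N_k$ is large.

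The main obstacle is precisely the transfer from $\tilde W_j$ back to $W_j[h]$: the natural normalizations make $\tilde W_j$ pointwise at least $W_j[h]$, so monotonicity of $t(G^\star,\cdot)$ gives only the upper bound $t(G^\star,W_j[h])\le t(G^\star,\tilde W_j)$, the wrong direction. The intended resolution is to split $[0,1]^2$ into the large region $\{W_i[h]\le\sqrt\delta\}$ (whose complement has measure at most $\sqrt\delta$ by Markov's inequality, so any edge of $G^\star$ lies outside it except for a set of measure $O(\|G^\star\|\sqrt\delta)$) and the small exceptional region. On the main region the pointwise bound $1-W_i[h]\ge 1-\sqrt\delta$ yields a multiplicative comparison of the form $t(G^\star,W_j[h])\ge t(G^\star,\tilde W_j)(1-\sqrt\delta)^{\|G^\star\|}-O(\|G^\star\|\sqrt\delta)$. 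Choosing $\delta$ as a sufficiently small function of $mn$ (so that $\delta$, and with it $\alpha_0$ and $N_k$, depend on $mn$) will keep both errors comfortably below the $(k/(k-1))^{\|G^\star\|-1}$ gain from the induction. Orchestrating this balance, while ensuring that the same $H^\bullet$ works throughout the induction, is the technical heart of the proof.
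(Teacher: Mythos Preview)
Your overall architecture matches the paper's: induct on the number of colours, split into a ``regular'' regime (no colour has a sparse part, so each has density at least $1/2k$, apply Corollary~\ref{cor:universal} or Theorem~\ref{thm:bip} and finish by Jensen) and an ``irregular'' regime (some colour has a sparse part, restrict and recurse on $k-1$ colours). The difficulty you flag in the irregular case is genuine, however, and your proposed resolution does not work.

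The issue is quantitative. After restricting to $h$ and renormalising to $\tilde W_j=W_j[h]/(1-W_i[h])$, you need a lower bound on $t(G^\star,W_j[h])$ in terms of $t(G^\star,\tilde W_j)$. Your region-splitting argument gives at best
\[
t(G^\star,W_j[h])\;\ge\;(1-\sqrt\delta)^{\|G^\star\|}\bigl(t(G^\star,\tilde W_j)-\|G^\star\|\sqrt\delta\bigr),
\]
because the exceptional set $\{W_i[h]>\sqrt\delta\}$ has measure $\le\sqrt\delta$ in $[0,1]^2$ and is hit by some edge with probability $\le\|G^\star\|\sqrt\delta$. The multiplicative loss $(1-\sqrt\delta)^{\|G^\star\|}$ can indeed be absorbed by the $(k/(k-1))^{\|G^\star\|}$ gain when $\delta$ is a small constant. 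But the additive error $\|G^\star\|\sqrt\delta$ is of order $mn\sqrt\delta$, whereas the quantity you are trying to lower bound, $(k-1)^{1-\|G^\star\|}$, is exponentially small in $mn$. To make the additive error negligible you would need $\delta$ exponentially small in $mn$; then $\alpha_0$ (which by Lemma~\ref{lm:omegaalpha} and Theorem~\ref{thm:bip} behaves like a power of $\delta$) is also exponentially small in $mn$, and the vertex-count loss $\alpha_0^{|V(G^\star)|}$ becomes doubly exponential, swamping the single-exponential gain from dropping a colour. Letting ``$\delta$, and with it $\alpha_0$ and $N_k$, depend on $mn$'' does not escape this: the dependence of $\alpha_0$ on $\delta$ is too severe.

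The paper sidesteps this entirely by \emph{not renormalising}. Instead it proves a strengthened inductive claim: for each $k'\le k$ there is a threshold $\delta_{k'}>0$ (with $\delta_1>\delta_2>\cdots>\delta_k$ chosen in advance) such that the conclusion holds whenever $\|W_1+\cdots+W_{k'}\|_\infty\le 1$ and the total density is at least $1-\delta_{k'}$. In the irregular case one restricts all colours to $h$; the remaining $k'-1$ graphons $W_1[h],\ldots,W_{k'-1}[h]$ already have total density at least $1-\delta_{k'-1}$ on the restricted domain (no division by $1-W_i[h]$ needed), and the strengthened hypothesis applies directly. The cascade $\delta_{k'}=\min\{\delta_{k'-1}\alpha_{k'}^2/2,\,1/4k'\}$ is set up precisely so that the density lost to the restriction and to the removed colour stays within the tolerance of the previous level. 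This removes the wrong-direction monotonicity problem you identified, at the cost of tracking a sequence of $\delta$'s and adding a separate subcase for when some density $p_i$ drops below $1/2k$.
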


\begin{proof}
Fix the graph $G$ and the integer $k$.
Apply Corollary~\ref{cor:universal} with $p_0=1/2k$
to obtain a positive real $\varepsilon_0$, a positive integer $n_0$ and
a connected rooted graph $H^\bullet$ containing $G$ as an induced subgraph such that
every graphon $W$ with density $p \geq 1/2k$ and with $\cut{W-p} \leq \varepsilon_0$ satisfies that
\begin{equation}
t\left(H^\bullet\oplus K_{m|\ell,n}^\bullet,W\right)\ge p^{\nume{H}+mn+\ell}\label{eq:coruniv}
\end{equation}
for all even integers $m,n,\ell\ge n_0$ such that $m$ is divisible by $5$ and $\ell\ge n + \nume{H}$.
Without loss of generality, we may assume that $\varepsilon_0\le 1/(4k)$.

We next repeatedly apply Theorem~\ref{thm:bip}. 
Set $\delta_1=\varepsilon_0/2\le 1/8$.
For $k'=2,\ldots,k$,
apply Theorem~\ref{thm:bip} with $p_0=1/2k$, $\delta=\delta_{k'-1}/2$, $\varepsilon_0$ and $H^\bullet$
to obtain a positive real $\alpha_{k'}$ and an integer $n_{k'}$, and
set $\delta_{k'}=\min\{\delta_{k'-1}\alpha_{k'}^2/2,1/4k'\}$.
Finally, set $N_k$ in a way that $N_k$ is at least $n_0$ and the following holds for all $m,n\ge N_k$:
\begin{align}
n + \nume{H} & \le mn/4,\label{eq:choiceNkL}\\
\left(1+1/4k\right)^{2\nume{H}+(m+1)n} & \ge k, \label{eq:choiceNkexp}
\end{align}
and
\begin{equation}
  \alpha_{k'}^{\numv{H}+\nume{H}+m+2n-1}
  \left(\frac{1-\delta_{k'-1}}{k'-1}\right)^{2\nume{H}+(m+1)n}\ge
  \left(\frac{1}{k'}\right)^{2\nume{H}+(m+1)n-1}\label{eq:choiceNk}
\end{equation}  
for $k'=2,\ldots,k$.
Note that such $N_k$ exists
since it holds $\frac{1-\delta_{k'-1}}{k'-1}\ge\frac{4k'-1}{4(k'-1)k'}>\frac{1}{k'}$ for $k'=2,\ldots,k$.

We now prove the following claim, which implies that the statement of the theorem.

\begin{claim}
\label{cl:step}
Let $k'\in [k]$.
Any $k'$ graphons $W_1, \dots, W_{k'}$
such that $\|W_1 + \cdots +W_{k'}\|_\infty\le 1$ and $t\left(K_2, W_1 + \cdots +W_{k'}\right) \geq 1-\delta_{k'}$
satisfy that
\begin{equation}
\sum_{i=1}^{k'} t\left(H^\bullet \oplus K_{m|\ell, n}^\bullet, W_i\right) \geq
  k'\cdot\left(\frac{t\left(K_2, W_1 + \dots +W_{k'}\right)}{k'}\right)^{\nume{H}+mn+\ell}\label{eq:mainind}
\end{equation}
for all even integers $m,n\ge N_k$ and $\ell=n+\nume{H}$ such that $m$ is divisible by $5$.
\end{claim}

\begin{proof}[Proof of Claim~\ref{cl:step}]
The proof proceed by induction on $k'=1,\ldots,k$.
The base of the induction is the case $k'=1$.
Consider a graphon $W_1$ such that $t(K_2,W_1)\geq 1-\delta_1=1-\varepsilon_0/2$,
i.e., the density $p$ of $W_1$ is at least $1-\varepsilon_0/2$.
Note that $\cut{W_1-p}\le\cut{W_1-1}+\lvert 1-p\rvert\le\varepsilon_0$.
Hence, Corollary~\ref{cor:universal} implies that \eqref{eq:coruniv} holds for $W=W_1$,
which is equivalent to \eqref{eq:mainind} for $k'=1$.

We now present the induction step.
Consider $k'\in\{2,\ldots,k\}$ and assume that the claim holds for $k'-1$.
Consider graphons $W_1,\ldots,W_{k'}$ and let $p_1,\ldots,p_{k'}$ be their respective densities;
note that $p_1+\cdots+p_{k'}\geq 1-\delta_{k'}$.
First suppose that there exists $i$ such that $\alpha_{\delta_{k'-1}/2}(W_i)\ge\alpha_{k'}$.
By symmetry, we may assume that $i=k'$.
Hence, there exists $h:[0,1]\to [0,1]$ with $\|h\|_1\ge\alpha_{k'}$ such that
the density of $W_{k'}[h]$ is at most $\delta_{k'-1}/2$.
It follows that the density of $(W_1+\cdots+W_{k'-1})[h]$ is at least
\[1-\frac{\delta_{k'}}{\|h\|_1^2}-\frac{\delta_{k'-1}}{2}\ge
  1-\frac{\delta_{k'}}{\alpha_{k'}^2}-\frac{\delta_{k'-1}}{2}\ge
  1-\frac{\delta_{k'-1}}{2}-\frac{\delta_{k'-1}}{2}=1-\delta_{k'-1}.\]
By the induction hypothesis, it holds that
\[\sum_{i=1}^{k'-1} t(H^\bullet \oplus K_{m|\ell, n}^\bullet, W_i[h]) \geq
  \frac{(1-\delta_{k'-1})^{\nume{H}+mn+\ell}}{(k'-1)^{\nume{H}+mn+\ell-1}}.\]
It follows using \eqref{eq:choiceNk} and $\|h\|_1\ge\alpha_{k'}$ that
\[\sum_{i=1}^{k'-1} t(H^\bullet \oplus K_{m|\ell, n}^\bullet, W_i) \geq
  \|h\|_1^{\numv{H}+m+n+\ell-1}\frac{(1-\delta_{k'-1})^{\nume{H}+mn+\ell}}{(k'-1)^{\nume{H}+mn+\ell-1}} \geq
  \left(\frac{1}{k'}\right)^{\nume{H}+mn+\ell-1},\]
which implies that \eqref{eq:mainind} holds.
In the rest,
we will assume that $\alpha_{\delta_{k'-1}/2}(W_i)<\alpha_{k'}$ for every $i\in[k']$.
 
Next suppose that there exists $i\in[k']$ such that the density $p_i$ of $W_i$ is less than $1/2k$;
by symmetry, we may assume that $i=k'$.
It follows that the density of one of the graphons $W_1,\ldots,W_{k'-1}$, say the graphon $W_1$,
is at least
\[\frac{1-1/2k-\delta_{k'}}{k'-1}
 \ge\frac{1-1/2k'-1/4k'}{k'-1}
 =\frac{4k'-3}{4(k'-1)k'}
 \ge\frac{1+1/4k}{k'}\,.\]
We obtain using Corollary~\ref{cor:universal} (if $\cut{W_1-p_1}\le\varepsilon_0$) or
Theorem~\ref{thm:bip} (if $\cut{W_1-p_1}>\varepsilon_0$) that
\[t(H^\bullet \oplus K_{m|\ell, n}^\bullet,W_1)\geq
  \left(\frac{1+1/4k}{k'}\right)^{\nume{H}+mn+\ell}\geq
  k\left(\frac{1}{k'}\right)^{\nume{H}+mn+\ell}\geq
  k'\left(\frac{1}{k'}\right)^{\nume{H}+mn+\ell},\]
which implies that \eqref{eq:mainind} (the middle inequality above
follows from \eqref{eq:choiceNkexp}).

It remains to consider the case that $p_i\ge 1/2k$ and $\alpha_{\delta_{k'-1}/2}(W_i)\le\alpha_{k'}$ for every $i\in [k']$.
In this case,
we apply to each $W_i$, $i\in [k']$,
either Corollary~\ref{cor:universal} (if $\cut{W_i-p_i}\le\varepsilon_0$) or
Theorem~\ref{thm:bip} (if $\cut{W_i-p_i}>\varepsilon_0$) to get that
\[t(H^\bullet \oplus K_{m|\ell, n}^\bullet,W_i)\geq p_i^{\nume{H}+mn+\ell}.\]
Hence, we conclude that
\[\sum_{i=1}^{k'}t(H^\bullet \oplus K_{m|\ell, n}^\bullet,W_i)\geq
  \sum_{i=1}^{k'}p_i^{\nume{H}+mn+\ell}\geq
  k'\left(\frac{p_1+\cdots+p_{k'}}{k'}\right)^{\nume{H}+mn+\ell},\]
where the last inequality follows from Jensen's inequality.
It follows that \eqref{eq:mainind} holds, which completes the proof of the claim.
\end{proof}
Since the statement of Claim~\ref{cl:step} for $k'=k$ is actually stronger than the statement of the theorem,
the proof of theorem is now completed.
\end{proof}

\section{Conclusion}
\label{sec:concl}

We finish with briefly presenting another connection of our result to Sidorenko's Conjecture.
A simple example showing that a non-bipartite graph $H$ does not have the Sidorenko property
is given when the host graph $G$ is bipartite; in such case $t(H,G)=0$.
Kohayakawa, Nagle, R\"odl and Schacht~\cite{KohNRS10} conjectured that 
having a large spare part in the host graph $G$ is the only obstacle for $t(H, G) \geq t(K_2, G)^{\nume{H}}$ to hold.
Recall that a graph is \emph{$(\rho,d)$-dense}
if any subset of at least $\rho\cdot\numv{G}$ vertices of $G$ induces a subgraph with density at least $d$;
we refer to Subsection~\ref{subsec:limits} for the relation to the notions studied in this paper.
The conjecture of Kohayakawa, Nagle, R\"odl and Schacht~\cite{KohNRS10} 
says that any graph has the Sidorenko property with respect to $(o(1),d)$-dense host graphs.

\begin{conj}[Kohayakawa, Nagle, R\"odl and Schacht~\cite{KohNRS10}]
\label{conj:KNRS}
Let $H$ be a graph, and let $\eta$ and $p$ be positive reals.
There exists a positive real $\rho$ such that
$t(H, G) \geq (1-\eta)p^{\nume{H}}$
for every sufficiently large $(\rho, p)$-dense graph $G$.
\end{conj}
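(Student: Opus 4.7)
The plan is to attack the conjecture by transporting the local versus non-local regime dichotomy from the proof of Theorem~\ref{thm:step} to the single-host-graph setting. By a standard compactness argument in the theory of graph limits, the statement about sufficiently large $(\rho,p)$-dense graphs $G$ is equivalent to a graphon statement: every graphon $W$ with the $(\rho,p)$-density property --- namely, every measurable $A\subseteq[0,1]$ with $|A|\geq\rho$ satisfies $\int_{A\times A}W(x,y)\dd x\dd y\geq p|A|^2$ --- has $t(H,W)\geq(1-\eta)p^{\|H\|}$. It therefore suffices to prove this graphon version for $\rho$ chosen small enough in terms of $H$, $p$ and $\eta$.

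With the reduction in hand, I would split into two regimes based on the cut distance between $W$ and the constant graphon equal to its overall density $p'$; observe that the $(\rho,p)$-density property forces $p'\geq p$. In the local regime, where $\cut{W-p'}\leq\varepsilon_0$ for a small $\varepsilon_0=\varepsilon_0(H,p,\eta)$, I would adapt the perturbative expansion used in the proof of Theorem~\ref{thm:core}: writing $W=p'+U$ and expanding $t(H,W)$ as a sum over subsets $F\subseteq E(H)$ of terms of the form $(p')^{\|H\|-|F|}t(H\langle F\rangle,U)$, the non-leading contributions can be controlled via the spectral estimates of Subsection~\ref{subsec:spectral} together with Lemmas~\ref{lm:L1a}--\ref{lm:L3b}; the desired bound $t(H,W)\geq(1-\eta)(p')^{\|H\|}\geq(1-\eta)p^{\|H\|}$ should follow since each error term is governed by powers of $t(C_4,U)$ and $t(P_3,U)$, and both are controlled by $\cut{U}$ via Lemmas~\ref{lm:gammagamma} and~\ref{lm:cutdistance}. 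In the non-local regime, where $\cut{W-p'}>\varepsilon_0$, the $(\rho,p)$-density property enters through the contrapositive of Lemma~\ref{lm:omegaalpha}: any graphon whose density of $H$ is very small must contain a sparse part of positive measure, but such a part is forbidden by the density property as soon as $\rho$ is chosen smaller than $\alpha_0(H,p/2)$.

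The main obstacle is that the argument above only furnishes a qualitative positive lower bound on $t(H,W)$ in the non-local regime, not the quantitatively sharp bound $(1-\eta)p^{\|H\|}$: Lemma~\ref{lm:omegaalpha} produces an $\omega_0$ depending on $H$ but does not scale as $p^{\|H\|}$. Bridging this gap would require a refined version of that lemma providing the correct $p^{\|H\|}$ scaling, which appears to demand a much stronger use of $(\rho,p)$-density --- for example, iterating on induced subgraphons whose measure is just above $\rho$ and applying a weighted counting argument, or combining the density condition with a regularity-based embedding in the spirit of the standard counting lemma. The specific engineered structure $H^\bullet\oplus K_{m|\ell,n}^\bullet$ used in the paper --- in particular the complete bipartite piece that supplies the strong entropy bound of Lemma~\ref{lm:entropy} --- is not available for a general $H$, so this step is genuinely new and forms the principal difficulty; for $H$ of bounded chromatic number an inductive approach analogous to Theorem~\ref{thm:step} seems plausible, but the fully general case likely requires an additional new idea.
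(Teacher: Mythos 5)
There is a genuine gap, and in fact the statement you were asked about is not proved in the paper at all: it is Conjecture~\ref{conj:KNRS}, an open conjecture of Kohayakawa, Nagle, R\"odl and Schacht, stated in the concluding section only as context. The paper merely observes that Theorems~\ref{thm:universal} and~\ref{thm:bip} imply the conjecture for the very particular graphs $H^\bullet\oplus K_{m|\ell,n}^\bullet$ it constructs; among non-bipartite graphs the conjecture is known only for a few special families, and even the bipartite case would essentially require Sidorenko's Conjecture. So no proposal along these lines could be checked against a proof in the paper, and yours does not constitute a proof either --- you say as much yourself in the final paragraph.

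Concretely, the gap you flag is real and is the heart of the matter: in the non-local regime, Lemma~\ref{lm:omegaalpha} only yields a lower bound $\omega_0=\omega_0(H,\delta)$ with no $p^{\|H\|}$ scaling, and the mechanism the paper uses to convert ``far from quasirandom'' into a quantitative surplus --- the complete bipartite block together with the entropy bound of Lemma~\ref{lm:entropy} and Lemma~\ref{lm:Kab} --- is simply not available for an arbitrary $H$. But the local regime is also not as routine as your sketch suggests: the expansion of $t(H,p'+U)$ over subsets $F\subseteq E(H)$ produces signed terms $t(H\langle F\rangle,U)$, and the estimates you cite (Lemmas~\ref{lm:L1a}--\ref{lm:L3b}, and the grouping argument of Theorem~\ref{thm:core}) exploit that the relevant subgraphs are bipartite with large girth or have long degree-two paths; for a general $H$ (say one containing $K_4$, which is not even common) there is no known way to dominate the negative terms by the positive ones, and indeed the conjectured inequality in the local regime is exactly the hard open ``locally Sidorenko/locally dense'' problem. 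Finally, your opening reduction to a graphon statement needs care, since $(\rho,p)$-denseness is a property at a fixed vertex-set scale $\rho$ and the quantifier order (``there exists $\rho$ for every sufficiently large $G$'') must be preserved under the limit; this is fixable, but it is not the ``standard compactness argument'' one-liner as stated. In short: your plan correctly mirrors the paper's local/non-local dichotomy, but the two steps where the paper's argument genuinely uses the engineered structure of $H^\bullet\oplus K_{m|\ell,n}^\bullet$ are precisely the steps that remain open for general $H$, so the proposal is a (reasonable) research program rather than a proof.
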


Sidorenko's Conjecture would imply that Conjecture~\ref{conj:KNRS} holds for all bipartite graphs.
Among non-bipartite graphs, Conjecture~\ref{conj:KNRS} is known to hold only for some specific families~\cite{KohNRS10, Rei14, Lee21, BraSW24},
e.g, complete multipartite graphs, unicyclic graphs, cycles with a single chord,
graphs obtained by gluing complete multipartite graphs in a tree-like way, and
graphs obtained by gluing along independent sets in highly symmetric ways.
Theorems~\ref{thm:universal} and \ref{thm:bip} imply that for every graph $G$ with girth at least $50$,
there exists a connected rooted graph $H^\bullet$ containing $G$ as an induced subgraph and $n_0$ such that
the graph $H^\bullet \oplus K_{m|\ell,n}^\bullet$ satisfies Conjecture~\ref{conj:KNRS}
for all even integers $m,n,\ell\ge n_0$ such that $m$ is divisible by $5$ and $n+\nume{H}\le\ell\le mn/4$.

We finish with an open problem motivated by the techniques used in the proof Theorem~\ref{thm:universal},
where it was essential that the graph $H^\bullet\oplus K_{m|\ell,n}^\bullet$ has a long path formed by vertices of degree two.
In particular,
the techniques used in this paper would not establish the existence of a $3$-connected high-chromatic common graph.
So, it is natural to ask the following.

\begin{problem}
Is it true that for every $\ell\ge 2$ and every $k\ge 3$,
there exists an $\ell$-chromatic $k$-connected common graph?
\end{problem}

Ko and Lee~\cite{KL23} answered the above question in the affirmative
by combining our construction and the book product of graphs.
They also posed the following more general problem.

\begin{problem}[{Ko and Lee~\cite[Question 3.1]{KL23}}]
Is it true that for every $\ell\ge 2$, every $k\ge 3$ and every $g\ge 4$,
there exists an $\ell$-chromatic $k$-connected common graph with girth at least $g$?
\end{problem}

\section*{Acknowledgements}

The authors would like to thank Jon Noel and Sergey Norin for insightful comments
on possible ways of constructing common and more generally $k$-common graphs and
many detailed discussions concerning common graphs and graph limits.
The authors are also indebted to the anonymous referee for many comments that
have helped to improve the presentation of the results and proofs in the paper and
led to making the presented proofs more accessible.

\bibliographystyle{bibstyle}
\bibliography{hcommon}

\begin{thebibliography}{10}
\providecommand{\url}[1]{\texttt{#1}}
\providecommand{\urlprefix}{URL }
\providecommand{\eprint}[2][]{\url{#2}}

\bibitem{AloS16}
N.~Alon and J.~H. Spencer: The probabilistic method, Wiley Series in Discrete
  Mathematics and Optimization, 2016, fourth edition.

\bibitem{BlaR65}
G.~R. Blakley and P.~Roy: \emph{A {H}\"{o}lder type inequality for symmetric
  matrices with nonnegative entries}, Proc. Amer. Math. Soc. \textbf{16}
  (1965), 1244--1245.

\bibitem{BraSW24}
D.~Brada\v{c}, B.~Sudakov and Y.~Wigderson: \emph{Counting subgraphs in locally
  dense graphs}, preprint arXiv:2406.12418.

\bibitem{BurR80}
S.~A. Burr and V.~Rosta: \emph{On the {R}amsey multiplicities of
  graphs---problems and recent results}, J. Graph Theory \textbf{4} (1980),
  347--361.

\bibitem{CamGMS23}
M.~Campos, S.~Griffiths, R.~Morris and J.~Sahasrabudhe: \emph{An exponential
  improvement for diagonal ramsey}, preprint arXiv:2303.09521.

\bibitem{Cla92}
L.~Clark: \emph{The minimum number of subgraphs in a graph and its complement},
  J. Graph Theory \textbf{16} (1992), 451--458.

\bibitem{Con09}
D.~Conlon: \emph{A new upper bound for diagonal ramsey numbers}, Ann. of Math.
  (2) \textbf{170} (2009), 941--960.

\bibitem{ConFS10}
D.~Conlon, J.~Fox and B.~Sudakov: \emph{An approximate version of {S}idorenko's
  conjecture}, Geom. Funct. Anal. \textbf{20} (2010), 1354--1366.

\bibitem{ConFS15}
D.~Conlon, J.~Fox and B.~Sudakov: \emph{Recent developments in graph {R}amsey
  theory}, in: Surveys in combinatorics 2015, \emph{London Math. Soc. Lecture
  Note Ser.}, volume 424 (2015), 49--118.

\bibitem{ConKLL18}
D.~Conlon, J.~H. Kim, C.~Lee and J.~Lee: \emph{Some advances on {S}idorenko's
  conjecture}, J. Lond. Math. Soc. \textbf{98} (2018), 593--608.

\bibitem{ConL17}
D.~Conlon and J.~Lee: \emph{Finite reflection groups and graph norms}, Adv.
  Math. \textbf{315} (2017), 130--165.

\bibitem{ConL21}
D.~Conlon and J.~Lee: \emph{Sidorenko's conjecture for blow-ups}, Discrete
  Anal.  (2021), paper no. 2, 13pp.

\bibitem{CooKM18}
J.~W. Cooper, D.~Kr\'{a}\v{l} and T.~L. Martins: \emph{Finitely forcible graph
  limits are universal}, Adv. Math. \textbf{340} (2018), 819--854.

\bibitem{CsoHL23}
E.~{Cs{\'o}ka}, T.~{Hubai} and L.~{Lov{\'a}sz}: \emph{Locally common graphs},
  J. Graph Theory \textbf{102} (2023), 472--483.

\bibitem{Erd62}
P.~Erd\H{o}s: \emph{On the number of complete subgraphs contained in certain
  graphs}, Magyar Tud. Akad. Mat. Kutat\'o Int. K\"ozl. \textbf{7} (1962),
  459--464.

\bibitem{ErdS83}
P.~Erd\H{o}s and M.~Simonovits: \emph{Supersaturated graphs and hypergraphs},
  Combinatorica \textbf{3} (1983), 181--192.

\bibitem{Erd59}
P.~Erd{\H{o}}s: \emph{Graph theory and probability}, Canadian J. Math.
  \textbf{11} (1959), 34--38.

\bibitem{Fox08}
J.~Fox: \emph{There exist graphs with super-exponential {R}amsey multiplicity
  constant}, J. Graph Theory \textbf{57} (2008), 89--98.

\bibitem{FoxW}
J.~Fox and F.~Wei: \emph{On the local approach to {S}idorenko's conjecture}, in
  preparation.

\bibitem{FoxW17}
J.~Fox and F.~Wei: \emph{On the local approach to {S}idorenko's conjecture},
  Electron. Notes Discrete Math. \textbf{61} (2017), 459--465.
\newblock The European Conference on Combinatorics, Graph Theory and
  Applications (EUROCOMB'17).

\bibitem{Goo59}
A.~W. Goodman: \emph{On sets of acquaintances and strangers at any party},
  Amer. Math. Monthly \textbf{66} (1959), 778--783.

\bibitem{GraRS13}
R.~L. Graham, B.~L. Rothschild and J.~H. Spencer: Ramsey theory, Wiley Series
  in Discrete Mathematics and Optimization, 2013.

\bibitem{GrzLLV22}
A.~Grzesik, J.~Lee, B.~Lidický and J.~Volec: \emph{On tripartite common
  graphs}, Combin. Probab. Comput.  (2022), 907--923.

\bibitem{HanKKV23}
R.~Hancock, D.~Král', M.~Krnc and J.~Volec: \emph{Towards characterizing
  locally common graphs}, Random Structures Algorithms \textbf{62} (2023),
  181--218.

\bibitem{Hat10}
H.~Hatami: \emph{Graph norms and {S}idorenko's conjecture}, Israel J. Math.
  \textbf{175} (2010), 125--150.

\bibitem{HatHKNR12}
H.~Hatami, J.~Hladk\'y, D.~Kr\'a\v{l}, S.~Norine and A.~Razborov:
  \emph{Non-three-colourable common graphs exist}, Combin. Probab. Comput.
  \textbf{21} (2012), 734--742.

\bibitem{Hin74}
N.~Hindman: \emph{Finite sums from sequences within cells of a partition of
  {$N$}}, J. Combinatorial Theory Ser. A \textbf{17} (1974), 1--11.

\bibitem{JagST96}
C.~Jagger, P.~\v{S}\v{t}ov\'\i \v{c}ek and A.~Thomason: \emph{Multiplicities of
  subgraphs}, Combinatorica \textbf{16} (1996), 123--141.

\bibitem{KL23}
S.~Ko and J.~Lee: \emph{Common graphs with arbitrary connectivity and chromatic
  number}, J. Combin. Theory Ser. B \textbf{162} (2023), 223--230.

\bibitem{KohNRS10}
Y.~Kohayakawa, B.~Nagle, V.~R\"{o}dl and M.~Schacht: \emph{Weak hypergraph
  regularity and linear hypergraphs}, J. Combin. Theory Ser. B \textbf{100}
  (2010), 151--160.

\bibitem{KraNNVW22}
D.~Král', J.~A. Noel, S.~Norin, J.~Volec and F.~Wei: \emph{Non-bipartite
  k-common graphs}, Combinatorica \textbf{42} (2022), 87--114.

\bibitem{Lee21}
J.~Lee: \emph{On some graph densities in locally dense graphs}, Random
  Structures Algorithms \textbf{58} (2021), 322--344.

\bibitem{Lov11}
L.~Lov\'{a}sz: \emph{Subgraph densities in signed graphons and the local
  {S}imonovits-{S}idorenko conjecture}, Electron. J. Combin. \textbf{18}
  (2011), Paper 127, 21.

\bibitem{Lov12}
L.~Lov\'{a}sz: Large networks and graph limits, \emph{AMS Colloquium
  Publications}, volume~60, 2012.

\bibitem{Ram30}
F.~P. Ramsey: \emph{On a problem of formal logic}, Proceedings of the London
  Mathematical Society \textbf{2} (1930), 264--286.

\bibitem{Raz07}
A.~A. Razborov: \emph{Flag algebras}, J. Symbolic Logic \textbf{72} (2007),
  1239--1282.

\bibitem{Rei14}
C.~Reiher: \emph{Counting odd cycles in locally dense graphs}, J. Combin.
  Theory Ser. B \textbf{105} (2014), 1--5.

\bibitem{Sah23}
A.~Sah: \emph{Diagonal ramsey via effective quasirandomness}, Duke Math. J.
  \textbf{172} (2023), 545--567.

\bibitem{Sid93}
A.~Sidorenko: \emph{A correlation inequality for bipartite graphs}, Graphs
  Combin. \textbf{9} (1993), 201--204.

\bibitem{Sid96}
A.~Sidorenko: \emph{Randomness friendly graphs}, Random Structures Algorithms
  \textbf{8} (1996), 229--241.

\bibitem{Sid86}
A.~F. Sidorenko: \emph{Extremal problems in graph theory and
  functional-analytic inequalities}, Proceedings of the {A}ll-{U}nion seminar
  on discrete mathematics and its applications ({R}ussian) (1986), 99--105.

\bibitem{Sid89}
A.~F. Sidorenko: \emph{Cycles in graphs and functional inequalities}, Mat.
  Zametki \textbf{46} (1989), 72--79, 104.

\bibitem{Sid91}
A.~F. Sidorenko: \emph{Inequalities for functionals generated by bipartite
  graphs}, Diskret. Mat. \textbf{3} (1991), 50--65.

\bibitem{Sze15}
B.~Szegedy: \emph{An information theoretic approach to {S}idorenko's
  conjecture}, preprint arXiv:1406.6738.

\bibitem{Sze75}
E.~Szemer\'{e}di: \emph{On sets of integers containing no {$k$} elements in
  arithmetic progression}, Acta Arith. \textbf{27} (1975), 199--245.
\newblock Collection of articles in memory of Juri\u{\i} Vladimirovi\v{c}
  Linnik.

\bibitem{Tho89}
A.~Thomason: \emph{A disproof of a conjecture of {E}rd{\H{o}}s in {R}amsey
  theory}, J. London Math. Soc. (2) \textbf{39} (1989), 246--255.

\end{thebibliography}

\newpage

\section*{Appendix}

We provide a self-contained proof of Lemma~\ref{lm:omegaalpha} for completeness.

\begin{proof}[Proof of Lemma~\ref{lm:omegaalpha}]
Fix $\delta>0$.
We will show by induction on $r$ that
there exist positive reals $\omega_r$ and $\alpha_r$ such that
$t(K_r,W)\geq\omega_r$ or $\alpha_{\delta}(W)\ge\alpha_r$.
The statement of the lemma will follow by setting $\omega_0=\omega_{\numv{H}}$ and $\alpha_0=\alpha_{\numv{H}}$ (note that
$t(K_r,W)\leq t(H,W)$ for every $r$-vertex graph $H$).

The base of the induction is the case $r=1$, which holds with $\omega_1=1$ and any $\alpha_1 \in (0,1]$.
We next present the induction step when $r\ge 2$.
Assuming the existence of $\omega_{r-1}$ and $\alpha_{r-1}$,
we will show that it is possible to set $\omega_r=(1-\delta)\delta^{r-1}\omega_{r-1}$ and $\alpha_r=\delta\alpha_{r-1}$.

Consider a graphon $W$ and
let $A$ be the set of $x\in [0,1]$ such that $\deg_W(x)\le\delta^2$.
If $\mu(A)\ge\delta$,
then the density of the graphon $W[A]$ is at most $\mu(A)\delta^2/\mu(A)^2\le\delta$,
which yields that $\alpha_{\delta}(W)\ge\delta\ge\alpha_r$.
Hence, we can assume that $\mu(A)\le\delta$ in what follows.

Let $f_x:[0,1]\to [0,1]$ for $x\in [0,1]$ be the function defined as $f_x(y)=W(x,y)$.
Observe that
\[t(K_r,W)=\int_{[0,1]}\left(\int_{[0,1]}f_x(y)\dd y\right)^{r-1}t(K_{r-1},W[f_x])\dd x.\]
It follows that if $t(K_{r-1},W[f_x])\geq\omega_{r-1}$ for every $x\in [0,1]\setminus A$,
then $t(K_r,W)\geq (1-\mu(A))\delta^{r-1}\omega_{r-1}\geq\omega_r$.
Otherwise, there exists $x\in [0,1]\setminus A$ such that $\alpha_{\delta}(W[f_x])\geq\alpha_{r-1}$,
which means that there exist $g:[0,1]\to [0,1]$ such that
\[\frac{\int_{[0,1]}f_x(y)g(y)\dd y}{\int_{[0,1]}f_x(y)\dd y}\ge\alpha_{r-1}\qquad\mbox{and}\qquad
  \frac{\int_{[0,1]^2}f_x(y)f_x(z)g(y)g(z)W(y,y)\dd y\dd z}{\left(\int_{[0,1]}f_x(y)g(y)\dd y\right)^2}\le\delta.\]
It follows that the function $h:[0,1]\to [0,1]$ defined as $h(y)=f_x(y)g(y)$ satisfies that
$\|h\|_1\ge\alpha_{r-1}\delta=\alpha_r$ and the density of $W[h]$ is at most $\delta$.
Hence, $\alpha_{\delta}(W)\ge\alpha_r$ as required.
This completes the proof of the induction step and so the proof of the lemma.
\end{proof}

\end{document}